\newcommand{\N}{\mathbb N} 
\newcommand{\Z}{\mathbb Z} 
\newcommand{\R}{\mathbb R} 
\newcommand{\C}{\mathbb C} 
\newcommand{\vareps}{\varepsilon}
\newcommand{\sct} {s_{\text{c}}}
\newcommand{\sce} {s_{\emph{c}}}
\newcommand{\gct} {\gamma_{\text{c}}}
\newcommand{\im}[1]{\mbox{Im} \ #1} 
\newcommand{\scal}[1]{\left\langle #1 \right\rangle} 
\newcommand{\defendproof}{\hfill $\Box$} 
\newtheorem{theorem}{Theorem}[section]
\newtheorem{lem}[theorem]{Lemma} 
\newtheorem{prop}[theorem]{Proposition}
\newtheorem{coro}[theorem]{Corollary} 
\theoremstyle{definition}
\newtheorem{rem}[theorem]{Remark}
\title[Blow-up criteria NLFS]{Blow-up criteria for fractional nonlinear Schr\"odinger equations} 
\author[V. D. Dinh]{Van Duong Dinh}
\address[V. D. Dinh]{Institut de Math\'ematiques de Toulouse UMR5219, Universit\'e Toulouse CNRS, 31062 Toulouse Cedex 9, France and Department of Mathematics, HCMC University of Pedagogy, 280 An Duong Vuong, Ho Chi Minh, Vietnam}
\email{dinhvan.duong@math.univ-toulouse.fr}
\keywords{Fractional nonlinear Schr\"odinger equation; Local well-posedness; Virial estimates; Blow-up criteria;}
\subjclass[2010]{35B44, 35Q55}
\begin{document}

\begin{abstract}
We consider the focusing fractional nonlinear Schr\"odinger equation
\[
i\partial_t u - (-\Delta)^s u = -|u|^\alpha u, \quad (t,x) \in \R^+ \times \R^d,
\]
where $s \in (1/2,1)$ and $\alpha>0$. By using localized virial estimates, we establish general blow-up criteria for non-radial solutions to the equation. As consequences, we obtain blow-up criteria in both $L^2$-critical and $L^2$-supercritical cases which extend the results of Boulenger-Himmelsbach-Lenzmann [{\it Blowup for fractional NLS}, J. Funct. Anal. 271 (2016), 2569--2603] for non-radial initial data. 
\end{abstract}

\maketitle

\section{Introduction}
\setcounter{equation}{0}
We consider the Cauchy problem for fractional nonlinear Schr\"odinger equations with focusing power-type nonlinearity
\begin{align}
\left\{
\begin{array}{rcl}
i\partial_t u - (-\Delta)^s u &=& - |u|^{\alpha} u, \quad (t,x) \in \R^+ \times \R^d, \\
u(0) &=& u_0, 
\end{array}
\right.
\label{focusing FNLS}
\end{align}
where $u$ is a complex valued function defined on $\R^+ \times \R^d$, $d\geq 1$, $s \in (1/2,1)$ and $\alpha>0$. The operator $(-\Delta)^s$ is the fractional Laplacian which is defined by $\mathcal{F}^{-1}[|\xi|^{2s} \mathcal{F}]$ with $\mathcal{F}$ and $\mathcal{F}^{-1}$ the Fourier transform and its inverse respectively. The equation $(\ref{focusing FNLS})$ can be seen as a canonical model for a nonlocal dispersive PDE with focusing nonlinearity that can exhibit standing waves and wave collapse. The fractional Schr\"odinger equation was first discovered by Laskin \cite{Laskin} as a result of extending the Feynmann path integral, from the Brownian-like to L\'evy-like quantum mechanical paths. The fractional Schr\"odinger equation also appears in the continuum limit of discrete models with long-range interactions (see e.g. \cite{KirkLenzStaf}) and in the description of Boson stars as well as in water wave dynamics (see e.g. \cite{FrohJonsLenz} or \cite{IonePusa}). 

In the last decade, the fractional nonlinear Schr\"odinger equation $(\ref{focusing FNLS})$ has attrated a lot of interest in mathematics, numerics and physics (see e.g. \cite{BouHimLen, CaiMajMcLauTab, Dinh-fract, Dinh-blow-dyn-mfnls, Dinh-blow-dyn-ifnls, GuoSireWangZhao, GuoZhu,  HongSire, IonePusa, KleiSparMark, PengShi, SunWangYaoZheng} and references therein). The local well-posedness in Sobolev spaces for non-radial data was established by Hong-Sire in \cite{HongSire} (see also \cite{Dinh-fract}). The local well-posedness for radial $H^s$ data was studied by the author in \cite{Dinh-blow-dyn-mfnls}. The existence of radial finite time blow-up $H^s$ solutions was established recently by Boulenger-Himmelsbach-Lenzmann in \cite{BouHimLen}. Dynamics of finite time blow-up solutions were studied in \cite{Dinh-blow-dyn-mfnls, Dinh-blow-dyn-ifnls}. The sharp threshold of blow-up and scattering in the $L^2$-supercritical and $\dot{H}^s$-subcritical case was first considered by Sun-Wang-Yao-Zheng in \cite{SunWangYaoZheng}. This result was then extended by Guo-Zhu in \cite{GuoZhu}. The orbital stability as well as the orbital instability was proved by Peng-Shi in \cite{PengShi}. Recently, the author in \cite{Dinh-instability} proved the strong instability of standing waves for the equation in the $L^2$-supercritical case.

Before stating our main results, let us recall some basic facts of $(\ref{focusing FNLS})$. The equation $(\ref{focusing FNLS})$ has formally the conservation of mass and the energy:
\begin{align*}
M(u(t)) &= \int |u(t,x)|^2 dx = M(u_0), & \text{(Mass)}\\
E(u(t)) &= \frac{1}{2} \int |(-\Delta)^{s/2} u(t,x)|^2 dx - \frac{1}{\alpha+2} \int |u(t,x)|^{\alpha+2}dx=E(u_0). & \text{(Energy)}
\end{align*}
The equation $(\ref{focusing FNLS})$ also enjoys the scaling invariance
\[
u_\lambda(t,x) = \lambda^{\frac{2s}{\alpha}} u(\lambda^{2s} t, \lambda x), \quad \lambda>0. 
\]
A calculation shows
\[
\|u_\lambda(0)\|_{\dot{H}^{\nu}} = \lambda^{\nu+\frac{2s}{\alpha} - \frac{d}{2}} \|u_0\|_{\dot{H}^\nu}.
\]
We thus define the critical exponent
\begin{align}
\sct := \frac{d}{2} -\frac{2s}{\alpha}. \label{critical exponent}
\end{align}

The local well-posedness for $(\ref{focusing FNLS})$ in Sobolev spaces for non-radial data was studied by Hong-Sire in \cite{HongSire} (see also \cite{Dinh-fract}). Note that the unitary group $e^{-it(-\Delta)^s}$ enjoys several types of Strichartz estimates: non-radial Strichartz estimates (see e.g. \cite{ChoOzawaXia} or \cite{Dinh-fract}); radial Strichartz estimates (see e.g. \cite{GuoWang}, \cite{Ke} or \cite{ChoLee}); and weighted Strichartz estimates (see e.g. \cite{FangWang}). For non-radial data, these Strichartz estimates have a loss of derivatives. This makes the study of local well-posedness more difficult and leads to a weak local theory comparing to the standard nonlinear Schr\"odinger equation (e.g. $s=1$ in $(\ref{focusing FNLS})$). One can remove the loss of derivatives in Strichartz estimates by considering radially symmetric initial data. However, these Strichartz estimates without loss of derivatives require an restriction on the validity of $s$, that is $s \in \left[\frac{d}{2d-1},1\right)$. Since we are interested in blow-up criteria for solutions of $(\ref{focusing FNLS})$ in general Sobolev spaces $H^\gamma$, we first need to establish the local well-posedness in such spaces. This will lead to a regularity condition on the nonlinearity, that is, 
\begin{align}
\lceil \gamma \rceil \leq \alpha + 1, \label{regularity assumption}
\end{align}
where $\lceil \gamma \rceil$ is the smallest positive integer greater than or equal to $\gamma$. We refer the reader to Section $\ref{section local theory}$ for more details. 

Recently, Boulenger-Himmelsbach-Lenzmann in \cite{BouHimLen} established blow-up criteria for radial $H^s$ solutions to $(\ref{focusing FNLS})$. More precisely, they proved the following:
\begin{theorem}[Radial blow-up criteria \cite{BouHimLen}] \label{theorem BouHimLen}
	Let $d\geq 2$, $s \in (1/2, 1)$ and $\alpha>0$. Let $u_0 \in H^s$ be radial and assume that the corresponding solution to $(\ref{focusing FNLS})$ exists on the maximal time interval $[0,T)$. 
	\begin{itemize}
		\item {\bf Mass-critical case}, i.e. $\alpha = \frac{4s}{d}$: If $E(u_0)<0$, then the solution $u$ either blows up in finite time, i.e. $T<+\infty$ or blows up infinite time, i.e. $T=+\infty$ and 
		\[
		\|(-\Delta)^{s/2}u(t)\|_{L^2} \geq c t^s, \quad \forall t \geq t_0,
		\]
		with some $C>0$ and $t_0>0$ that depend only on $u_0, s$ and $d$. 
		\item {\bf Mass and energy intercritical case}, i.e. $\frac{4s}{d}<\alpha<\frac{4s}{d-2s}$: If $\alpha<4s$ and either $E(u_0)<0$, or if $E(u_0) \geq 0$, we assume that
		\[
		\left\{
		\begin{array}{ccc}
		E^{\sce}(u_0) M^{s-\sce}(u_0) &<& E^{\sce}(Q) M^{s-\sce}(Q), \\ \|(-\Delta)^{s/2}u_0\|_{L^2}^{\sce} \|u_0\|_{L^2}^{s-\sce} &>& \|(-\Delta)^{s/2} Q\|^{\sce}_{L^2} \|Q\|_{L^2}^{s-\sce},
		\end{array}
		\right.
		\]
		where $Q$ is the unique (up to symmetries) positive radial solution to the elliptic equation
		\begin{align}
		(-\Delta)^s Q + Q - |Q|^\alpha Q=0, \label{elliptic equation intercritical}
		\end{align}
		then the solution blows up in finite time, i.e. $T<+\infty$. 
		\item {\bf Energy-critical case}, i.e. $\alpha=\frac{4s}{d-2s}$: If $\alpha<4s$ and either $E(u_0)<0$, or if $E(u_0)\geq 0$, we assume that
		\[
		\left\{
		\begin{array}{ccc}
		E(u_0) &<& E(W), \\ 
		\|(-\Delta)^{s/2}u_0\|_{L^2} &>& \|(-\Delta)^{s/2} W\|_{L^2},
		\end{array}
		\right.
		\]
		where $W$ is the unique (up to symmetries) positive radial solution to the elliptic equation
		\begin{align}
		(-\Delta)^s W - |W|^{\frac{4s}{d-2s}} W=0, \label{elliptic equation energy-critical}
		\end{align}
		then the solution blows up in finite time, i.e. $T<+\infty$. 
	\end{itemize}
\end{theorem}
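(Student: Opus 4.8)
The plan is to extract from the equation a single \emph{localized virial estimate} valid for radial $H^s$ solutions, and then to close the argument by a short ODE comparison, treating the three regimes separately. First I fix a radial $\varphi\in C^\infty(\R^d)$ with $\varphi(x)=|x|^2$ for $|x|\le1$, $\varphi$ constant for $|x|$ large, $\nabla^2\varphi\le 2\,\mathrm{Id}$, and set $\varphi_R(x):=R^2\varphi(x/R)$ for $R\gg1$. Because $u(t)\in H^s$ with $s<1$, the naive quantity $2\,\mathrm{Im}\int\overline u\,\nabla\varphi_R\cdot\nabla u$ need not be finite, so I define the localized virial $M_{\varphi_R}[u(t)]:=\frac{d}{dt}\int\varphi_R|u(t)|^2\,dx$ through the Balakrishnan representation $(-\Delta)^s=\frac{\sin\pi s}{\pi}\int_0^\infty m^{s-1}(-\Delta)(-\Delta+m)^{-1}\,dm$, which writes $M_{\varphi_R}$ as an $m$-integral of local currents of the resolvents $w_m:=(-\Delta+m)^{-1}u$ and is well-defined for $u\in H^s$, $s>1/2$; it satisfies the a priori bound
\[
|M_{\varphi_R}[u(t)]|\le C\,R\,\|u_0\|_{L^2}\big(\|u_0\|_{L^2}+\|(-\Delta)^{s/2}u(t)\|_{L^2}\big),\qquad C=C(s,d),
\]
using mass conservation and interpolation (here $s>1/2$ is what makes the intermediate norm $\|u\|_{\dot H^{s-1/2}}$ subordinate to $\|u\|_{L^2}$ and $\|(-\Delta)^{s/2}u\|_{L^2}$). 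Differentiating $M_{\varphi_R}[u(t)]$ again in time and again reducing the commutator $[(-\Delta)^s,\varphi_R]$ to local commutators with the resolvents via Balakrishnan, one obtains the localized virial inequality
\[
\frac{d}{dt}M_{\varphi_R}[u(t)]\le 4d\alpha\,E(u_0)-2(d\alpha-4s)\,\|(-\Delta)^{s/2}u(t)\|_{L^2}^2+\mathcal R_R(u(t)),
\]
where $\mathcal R_R$ collects the contributions from $\{|x|\gtrsim R\}$ (where $\nabla^2\varphi_R\ne 2\,\mathrm{Id}$) together with the genuinely nonlocal terms coming from the $m$-integral. Exploiting the pointwise decay of radial functions through radial Sobolev (``Strauss''-type) inequalities — available for $d\ge2$ and $s>1/2$ — and optimizing the resulting interpolation, one bounds
\[
|\mathcal R_R(u(t))|\le \eta(R)\,\|(-\Delta)^{s/2}u(t)\|_{L^2}^2+C\,R^{-\sigma}\big(1+\|(-\Delta)^{s/2}u(t)\|_{L^2}^{\beta}\big),
\]
with $\eta(R)\to0$ as $R\to\infty$ and $\sigma>0$; the decisive feature is that the power $\beta=\beta(d,\alpha,s)$ is $<2$ precisely when $\alpha<4s$, while $\beta$ is irrelevant in the mass-critical case.

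\emph{Mass-critical case} ($d\alpha=4s$). Here the quadratic term vanishes and the inequality reads $\frac{d}{dt}M_{\varphi_R}[u(t)]\le 4d\alpha\,E(u_0)+\mathcal R_R(u(t))$. Since $E(u_0)<0$, I choose $R=R(t)$ large enough (depending on $\|(-\Delta)^{s/2}u(t)\|_{L^2}$) that $\mathcal R_R(u(t))\le 2d\alpha|E(u_0)|$, which gives $\frac{d}{dt}M_{\varphi_{R(t)}}[u(t)]\le 2d\alpha\,E(u_0)<0$. If $T<\infty$ there is nothing to prove; if $T=\infty$, integrating this differential inequality and comparing the resulting linear growth of $-M_{\varphi_{R(t)}}[u(t)]$ with the a priori bound of the first paragraph forces $\|(-\Delta)^{s/2}u(t)\|_{L^2}\ge c\,t^{s}$ for all large $t$, with $c,t_0$ depending only on $u_0,s,d$ — which is the stated dichotomy.

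\emph{Intercritical and energy-critical cases}. Now $d\alpha>4s$ (resp. the nonlinearity is $\dot H^s$-critical), so the $-2(d\alpha-4s)\|(-\Delta)^{s/2}u(t)\|_{L^2}^2$ term is genuinely available. Combining it with the sharp fractional Gagliardo--Nirenberg inequality — optimizer $Q$ solving $(\ref{elliptic equation intercritical})$ — respectively the sharp fractional Sobolev inequality — optimizer $W$ solving $(\ref{elliptic equation energy-critical})$ — together with conservation of mass and energy and the hypotheses (either $E(u_0)<0$, or the strict scale-invariant energy/mass inequality below the ground-state level paired with the reversed $\dot H^s$-inequality), a continuity/trapping argument propagates the initial conditions for all time and produces a uniform lower bound $\|(-\Delta)^{s/2}u(t)\|_{L^2}\ge\delta_0>0$ and a constant $\delta>0$ with $4d\alpha\,E(u_0)-2(d\alpha-4s)\|(-\Delta)^{s/2}u(t)\|_{L^2}^2\le -\delta\|(-\Delta)^{s/2}u(t)\|_{L^2}^2$ on $[0,T)$. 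Because $\alpha<4s$ makes $\beta<2$, for $R$ fixed large the remainder is absorbed, $|\mathcal R_R(u(t))|\le\frac{\delta}{2}\|(-\Delta)^{s/2}u(t)\|_{L^2}^2+C(R)$, whence $\frac{d}{dt}M_{\varphi_R}[u(t)]\le-\frac{\delta}{4}\|(-\Delta)^{s/2}u(t)\|_{L^2}^2$ once $\|(-\Delta)^{s/2}u(t)\|_{L^2}$ is large, which happens eventually since $M_{\varphi_R}$ is then strictly decreasing (hence becomes negative) while obeying $|M_{\varphi_R}[u(t)]|\le CR(1+\|(-\Delta)^{s/2}u(t)\|_{L^2})$. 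Feeding the a priori bound back into the virial inequality yields a closed differential inequality of the form $\frac{d}{dt}M_{\varphi_R}[u(t)]\le -c\,|M_{\varphi_R}[u(t)]|^{p}$ with $p>1$, whose negative solutions cannot exist globally; therefore $T<\infty$.

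The main obstacle is the second paragraph's estimate on $\mathcal R_R$: one must carefully differentiate through the Balakrishnan $m$-integral, split the $m$-variable at an $R$-dependent threshold, and use the radial smoothing so that the subquadratic bound $\beta<2$ holds exactly in the range $\alpha<4s$ — this is what confines the sharp finite-time statement to that range. A secondary, more routine, difficulty is the trapping/continuity step in the intercritical and energy-critical cases, which relies on the Pohozaev identities for $Q$ and $W$ and on the sharp constants in the fractional Gagliardo--Nirenberg and Sobolev inequalities; and one should also check that the local theory of Section~\ref{section local theory} actually provides the radial $H^s$ solution on $[0,T)$ to which the argument applies.
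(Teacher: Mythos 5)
A point of orientation first: the paper does not prove Theorem \ref{theorem BouHimLen} at all --- it is quoted from Boulenger--Himmelsbach--Lenzmann \cite{BouHimLen}, and the arguments the paper actually carries out (Section \ref{section blowup criteria}) concern the \emph{non-radial} statements, where the radial Strauss-type decay is unavailable and is replaced by an $L^2$-estimate outside a large ball (Lemma \ref{lemma L2 norm outside large ball}) obtained from the type-I virial action together with the contradiction hypothesis that some $L^q$-norm stays bounded. Your sketch instead reconstructs the original radial proof of \cite{BouHimLen}: localized virial via Balakrishnan's formula, radial Sobolev control of the exterior error terms producing exactly the restriction $\alpha<4s$ through the subquadratic exponent $\beta<2$, trapping below the ground state via the sharp Gagliardo--Nirenberg/Sobolev constants and the Pohozaev identities, and an ODE comparison. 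That is the right strategy for this theorem, and it is genuinely different from the route the paper takes for its own results; what the radial route buys is finite-time blow-up without any hypothesis on $L^q$-norms, at the price of radial symmetry, $d\ge2$, and the condition $\alpha<4s$.

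Two concrete corrections are needed. First, you conflate the two virial actions. The quantity $2\,\mathrm{Im}\int\overline u\,\nabla\varphi_R\cdot\nabla u\,dx$ is finite for every $u\in H^{1/2}$ by Lemma \ref{lemma various estimate 1}, and it is this type-II action whose time derivative is computed in Lemma \ref{lemma localized virial identity} and obeys the inequality you display. The object you define, $\frac{d}{dt}\int\varphi_R|u|^2\,dx$, is the derivative of the type-I action (Lemma \ref{lemma localized virial identity I}); for $s\ne1$ these two functionals do not coincide, so differentiating yours once more is not what produces your second display. The argument works verbatim once the type-II action is used throughout. Second, your a priori bound on the virial is stated with the wrong exponent. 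Lemma \ref{lemma various estimate 1} controls it by $R\|u\|_{H^{1/2}}^2$, and for $1/2<s<1$ the interpolation reads $\|u\|_{\dot H^{1/2}}^2\lesssim\|u\|_{L^2}^{2-1/s}\|(-\Delta)^{s/2}u\|_{L^2}^{1/s}$ with $1/s>1$; the bound linear in $\|(-\Delta)^{s/2}u\|_{L^2}$ that you assert would require $s\ge1$. This is not cosmetic: with a linear bound, the linear decay $M_{\varphi_R}(u(t))\le-ct$ would force $\|(-\Delta)^{s/2}u(t)\|_{L^2}\gtrsim t$ rather than the stated $ct^{s}$; it is precisely the exponent $1/s$ that yields the rate $t^{s}$ in the mass-critical case, and in the supercritical cases it closes the ODE as $\frac{d}{dt}M_{\varphi_R}\le-c|M_{\varphi_R}|^{2s}$ with $2s>1$. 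The finite-time conclusions are insensitive to this slip, but the mass-critical growth rate is not, and as written your bound and your conclusion are mutually inconsistent.
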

Note that the uniqueness (up to symmetries) of positive radial solution to $(\ref{elliptic equation intercritical})$ and $(\ref{elliptic equation energy-critical})$ were proved in \cite{FrankLenzmann,FrankLenzmannSilvestre}.

The main purposes of this paper is to show blow-up criteria for non-radial $H^\gamma$ solutions for $(\ref{focusing FNLS})$. Before entering some details of our results, let us recall known blow-up criteria for the focusing nonlinear Schr\"odinger equation
\begin{align}
i \partial_t u + \Delta u = -|u|^\alpha u, \quad u(0) = u_0. \label{focusing NLS}
\end{align}
The existence of finite time blow-up $H^1$ solutions for $(\ref{focusing NLS})$ was first proved by Glassey in \cite{Glassey}. More precisely, he proved that for any negative $H^1$ initial data satisfying $x u_0 \in L^2$, the corresponding solution blows up in finite time. Ogawa-Tsutsumi in \cite{OgawaTsutsumi, OgawaTsutsumi1d} showed the existence of blow-up solutions for negative radial data in dimensions $d\geq 2$ and for negative data (not necessary radially symmetry) in the one dimensional case. Holmer-Roudenko in \cite{HolmerRoudenko} showed that in the mass and energy intercritical case, if initial data satisfies $E(u_0) \geq 0$ and
\begin{align}
\left\{
\begin{array}{ccc}
E^{\gct}(u_0) M^{1-\gct}(u_0) &<& E^{\gct}(R) M^{1-\gct}(R), \\ 
\|\nabla u_0\|_{L^2}^{\gct} \|u_0\|_{L^2}^{1-\gct} &>& \|\nabla R\|^{\gct}_{L^2} \|R\|_{L^2}^{1-\gct},
\end{array}
\right. \label{blowup condition NLS}
\end{align}
and in addition if $xu_0 \in L^2$ or $u_0$ is radial with $N\geq 2$ and $\alpha<4$, then the corresponding solution blows up in finite time. Here $R$ is the ground state of $(\ref{focusing NLS})$ which is the unique (up to symmetries) positive radial solution of the elliptic equation
\[
\Delta R - R + |R|^\alpha R=0,
\]
and $\gct= \frac{d}{2}-\frac{2}{\alpha} \in (0,1)$. Later, Holmer-Roudenko in \cite{HolmerRoudenko-diver} showed that if $H^1$ initial data (not necessary finite-variance or radially symmetry) satisfies $(\ref{blowup condition NLS})$, then the corresponding solution either blows up in finite time or it blows up infinite time in the sense that there exists a sequence of times $t_n \rightarrow +\infty$ such that $\|\nabla u(t_n)\|_{L^2} \rightarrow \infty$. Recently, Du-Wu-Zhang extended the result of \cite{HolmerRoudenko-diver} and proved a blow-up criterion for $(\ref{focusing NLS})$ with initial data (without finite-variance and radially symmetric assumptions) in the energy-critical and energy-supercritical cases. 
	
Inspiring by the idea of Du-Wu-Zhang, we study the blow-up criteria for the focusing fractional nonlinear equation $(\ref{focusing FNLS})$. The main difficulty is the appearance of the fractional order Laplacian $(-\Delta)^s$. When $s=1$, one can compute easily the time derivative of the virial action, which is
\begin{align}
\frac{d}{dt} \left(\int \varphi |u(t)|^2 dx\right) = 2 \im{\int \nabla \varphi \cdot \nabla u(t) \overline{u}(t) dx}. \label{virial action}
\end{align}
Using this identity, Du-Wu-Zhang \cite{DuWuZhang} derive an $L^2$-estimate in the exterior ball. Thanks to this $L^2$-estimate and the virial estimates, they prove the result. In the case $s\in (1/2,1)$, the identity $(\ref{virial action})$ does not hold. However, by exploiting the idea of \cite{BouHimLen} with the use of the Balakrishman's formula, namely
\[
(-\Delta)^s = \frac{\sin \pi s}{\pi} \int_0^\infty m^{s-1} \frac{-\Delta}{-\Delta +m} dm, \quad s \in (0,1),
\]
we are able to compute the time derivative of the virial action (see Lemma $\ref{lemma localized virial identity I}$):
\begin{align*}
\frac{d}{dt} \left(\int \varphi |u(t)|^2 dx\right) = &-i\int_0^\infty m^s \int \Delta \varphi |u_m(t)|^2 dx dm \\
&-2i \int_0^\infty m^s \int \overline{u}_m(t) \nabla \varphi \cdot \nabla u_m(t) dx dm,
\end{align*}
where $u_m(t)$ is an auxiliary function defined by
\[
u_m(t) = \sqrt{\frac{\sin \pi s}{\pi}} \frac{1}{-\Delta+m} u(t).
\]
This identity plays a similar role as in $(\ref{virial action})$, and we can show the blow-up criteria for $(\ref{focusing FNLS})$ with non-radial initial data.

Denote
\begin{align}
K(u(t)):=\frac{s}{2} \|(-\Delta)^{s/2} u(t)\|^2_{L^2} - \frac{d\alpha}{4(\alpha+2)} \|u(t)\|^{\alpha+2}_{L^{\alpha+2}}. \label{define blowup quantity}
\end{align}
We will see in $(\ref{virial identity})$ that the quantity $K(u(t))$ is related to the following virial identity
\[
\frac{d}{dt} \left( 4 \im{\int \overline{u}(t) x \cdot \nabla u(t) dx} \right) = 16K(u(t)).
\]
\begin{theorem}[Blow-up criteria] \label{theorem blowup criteria}
	Let $d\geq 1$, $s\in (1/2,1)$ and $\gamma \geq \max\{s,\sce\}$. Let $\alpha>0$ be such that if $\alpha$ is not an even integer, then $(\ref{regularity assumption})$ holds. Let $u_0 \in H^\gamma$ be such that the corresponding (not necessary radial) solution $u$ to $(\ref{focusing FNLS})$ exists on the maximal time $[0,T)$. If there exists $\delta>0$ such that 
	\begin{align}
	\sup_{t\in [0,T)} K(u(t)) \leq -\delta <0, \label{blowup condition}
	\end{align}
	then one of the following statements holds true:
	\begin{itemize}
	\item $u(t)$ blows up in finite time in the sense $T<+\infty$ must hold;
	\item $u(t)$ blows up infinite time and 
	\begin{align}
	\sup_{t\in [0,+\infty)} \|u(t)\|_{L^q} =\infty, \label{divergence Lq norm}
	\end{align}
	for any $q>\alpha+2$. In particular, there exists a time sequence $(t_n)_{n}$ such that $t_n \rightarrow +\infty$ and 
	\[
	\lim_{n\rightarrow \infty} \|u(t_n)\|_{L^q}=\infty,
	\]
	for any $q>\alpha+2$.
	\end{itemize}
\end{theorem}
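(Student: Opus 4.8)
The plan is to argue by contradiction: assume that the solution exists globally, i.e. $T=+\infty$, and that there is a constant $M_0<\infty$ with $\sup_{t\geq 0}\|u(t)\|_{L^q}\leq M_0$ for some fixed $q>\alpha+2$. Interpolating this bound with the conserved mass $M(u_0)$ controls $\|u(t)\|_{L^{\alpha+2}}$ uniformly in $t$, and then the energy conservation together with $(\ref{define blowup quantity})$ turns the hypothesis $(\ref{blowup condition})$ into a uniform lower bound of the form $\|(-\Delta)^{s/2}u(t)\|_{L^2}^2\geq c>0$ and, more importantly, a uniform \emph{upper} bound $\sup_{t\geq 0}\|(-\Delta)^{s/2}u(t)\|_{L^2}<\infty$; equivalently $\sup_{t\geq 0}\|u(t)\|_{H^s}<\infty$. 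Thus the negation of $(\ref{divergence Lq norm})$ forces a uniform $H^s$ bound on the global solution, and the whole problem reduces to showing: a global $H^s$-bounded solution satisfying $(\ref{blowup condition})$ cannot exist.

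To rule that out I would use the localized virial machinery. Take a radial cutoff $\varphi=\varphi_R$ that equals $|x|^2$ for $|x|\leq R$, is constant for $|x|\geq 2R$, with the usual bounds on its derivatives, and set $M_{\varphi_R}(t)=\int \varphi_R|u(t,x)|^2\,dx$. By the localized virial identity (Lemma $\ref{lemma localized virial identity I}$) and its second-derivative version, one obtains
\begin{align*}
\frac{d}{dt}\mathcal{M}_{\varphi_R}(t) = 16 K(u(t)) + \mathcal{R}_R(t),
\end{align*}
where $\mathcal{M}_{\varphi_R}(t)$ is the virial quantity built from $\varphi_R$ and $\mathcal{R}_R(t)$ collects all the error terms coming from the region $|x|\geq R$, from the commutators with $(-\Delta)^s$, and from the $m$-integral in the Balakrishman representation. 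The point of the uniform $H^s$ bound is precisely that, by Strichartz/Sobolev estimates and the structure of the auxiliary functions $u_m$, one gets an estimate of the shape $|\mathcal{R}_R(t)|\leq C\bigl(R^{-\theta}+R^{-\theta'}o_R(1)\bigr)$ for some $\theta,\theta'>0$, \emph{uniformly in $t$}, with $C$ depending only on the conserved quantities and the uniform $H^s$ norm. Choosing $R$ large enough, $(\ref{blowup condition})$ gives $\frac{d}{dt}\mathcal{M}_{\varphi_R}(t)\leq -8\delta<0$ for all $t\geq 0$, so $\mathcal{M}_{\varphi_R}(t)\leq \mathcal{M}_{\varphi_R}(0)-8\delta t\to-\infty$. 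On the other hand $|\mathcal{M}_{\varphi_R}(t)|\lesssim \|u(t)\|_{H^s}\|u(t)\|_{L^2}\lesssim 1$ by the uniform bound, which is the desired contradiction. (In the case $T<+\infty$ there is nothing to prove; the dichotomy in the statement is exactly "either $T<\infty$, or the $L^q$ norms are unbounded".)

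The last part of the statement — existence of a time sequence $t_n\to\infty$ along which $\|u(t_n)\|_{L^q}\to\infty$ — is then immediate from $(\ref{divergence Lq norm})$: the supremum over $[0,\infty)$ being infinite means we can pick $t_n$ with $\|u(t_n)\|_{L^q}\geq n$, and since on any finite interval $[0,\Lambda]$ the solution is continuous into $H^\gamma\hookrightarrow L^q$ (so $\sup_{[0,\Lambda]}\|u(t)\|_{L^q}<\infty$), the $t_n$ must escape to infinity.

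The main obstacle is the error term analysis in $\mathcal{R}_R(t)$: unlike the classical $s=1$ virial computation, here one must control the contributions of the auxiliary functions $u_m=\sqrt{\tfrac{\sin\pi s}{\pi}}\,(-\Delta+m)^{-1}u$ integrated against $m^s\,dm$, splitting the integral at some threshold $m\sim A$ and balancing $A$ against $R$, and one must check that every such error is bounded by a negative power of $R$ times a quantity controlled solely by the conserved mass/energy and the uniform-in-time $H^s$ norm (with the regularity assumption $(\ref{regularity assumption})$ ensuring the nonlinear term behaves well when $\alpha$ is not an even integer). This is the technical heart and mirrors the delicate estimates of Boulenger–Himmelsbach–Lenzmann, the new feature being that uniformity in $t$ must be tracked throughout rather than just finiteness on $[0,T)$; the restriction $\gamma\geq\max\{s,\sce\}$ and $s>1/2$ is what makes these bounds close.
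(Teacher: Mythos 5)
Your reduction to a uniformly $H^s$-bounded global solution is exactly the paper's first step, and the overall virial strategy is right, but there is a genuine gap at the heart of your error analysis: the claim that $|\mathcal{R}_R(t)|\leq C\bigl(R^{-\theta}+R^{-\theta'}o_R(1)\bigr)$ \emph{uniformly in $t$}, with $C$ depending only on the conserved quantities and the uniform $H^s$ norm. The commutator and $m$-integral errors are indeed $O(R^{-2s})$ uniformly in time (Lemma \ref{lemma various estimate 4} together with mass conservation), but the nonlinear remainder $\frac{2\alpha}{\alpha+2}\int(2d-\Delta\varphi_R)|u(t)|^{\alpha+2}\,dx$, supported in $\{|x|\geq R\}$, is not: for non-radial solutions there is no a priori spatial decay, mass can migrate to the region $|x|\sim R$ at late times, and the uniform $H^s$ bound only yields $O(1)$ for this term, with no gain in $R$. (For radial data one would invoke the Strauss-type decay, as in Boulenger--Himmelsbach--Lenzmann; removing radiality is precisely the difficulty this theorem addresses.) Consequently your differential inequality $\frac{d}{dt}\mathcal{M}_{\varphi_R}(t)\leq-8\delta$ for all $t\geq0$ is not justified, and the contradiction $\mathcal{M}_{\varphi_R}(t)\to-\infty$ collapses.

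What the paper does instead (following Du--Wu--Zhang) is a two-virial argument. First, the type-I action $V_{\psi_R}(u(t))=\int\psi_R|u(t)|^2dx$ with $\psi_R$ supported in $\{|x|\geq R/2\}$ satisfies $\bigl|\frac{d}{dt}V_{\psi_R}(u(t))\bigr|\lesssim R^{-1}\|u(t)\|^2_{H^s}$ (Corollary \ref{corollary localized virial estimate 1}, which rests on the Balakrishnan representation), whence the exterior $L^2$ control $\int_{|x|\geq R}|u(t)|^2dx\leq o_R(1)+\vareps$ holds \emph{only on the finite window} $t\in[0,T_0]$ with $T_0=\vareps R/C$ (Lemma \ref{lemma L2 norm outside large ball}). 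Second, the exterior nonlinear term is interpolated as $\lesssim\|u(t)\|^{(1-\eta)(\alpha+2)}_{L^q(|x|\geq R)}\|u(t)\|^{\eta(\alpha+2)}_{L^2(|x|\geq R)}$ with $\eta>0$; this is where the hypothesis $q>\alpha+2$ is actually used, whereas in your argument it plays no role beyond the $H^s$ bound --- a sign that the key mechanism is missing. One then obtains $\frac{d}{dt}M_{\varphi_R}(u(t))\leq-\delta$ only on $[0,T_0]$, and the contradiction must be extracted quantitatively: $M_{\varphi_R}(u(t))\leq-ct$ forces $\|(-\Delta)^{s/2}u(t)\|_{L^2}\gtrsim t^s$ near $t=T_0$, and letting $R\to\infty$ (hence $T_0\to\infty$) contradicts the uniform $H^s$ bound. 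Your final paragraph, extracting the sequence $t_n\to\infty$ from $(\ref{divergence Lq norm})$, is fine.
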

\begin{rem} \label{remark blowup criteria}
	\begin{itemize}
		\item The condition $\gamma \geq s$ ensures the solution enjoys the conservation of mass and energy. 
		\item It is still possible (see e.g. \cite[Remark 6.5.9]{Cazenave}) that there exists a solution which blows up in finite positive time is global in negative time and vice versa.
		\item In the case $T<+\infty$, we learn from the local theory  that if $\gamma>\sct$, then $\lim_{t\uparrow T} \|u(t)\|_{H^\gamma} =\infty$.
		
	\end{itemize}
\end{rem}
The following result gives blow-up criteria for solutions with negative energy initial data. 
\begin{coro} \label{corollary negative energy blowup criteria}
	Let $d\geq 1$, $s\in (1/2,1)$ and $\gamma\geq \max\{s,\sce\}$. Let $\alpha \geq \frac{4s}{d}$ be such that if $\alpha$ is not an even integer, then $(\ref{regularity assumption})$ holds. Let $u_0 \in H^\gamma$ be such that the corresponding (not necessary radial) solution to $(\ref{focusing FNLS})$ exists on the maximal time $[0,T)$. If $E(u_0)<0$, then one of the following statements holds true:
	\begin{itemize}
		\item $u(t)$ blows up in finite time in the sense $T<+\infty$ must hold;
		\item $u(t)$ blows up infinite time and 
		\[
		\sup_{t\in [0,+\infty)} \|u(t)\|_{L^q} =\infty,
		\]
		for any $q >\alpha+2$. In particular, there exists a time sequence $(t_n)_n$ such that $t_n \rightarrow +\infty$ and 
		\[
		\lim_{n\rightarrow \infty} \|u(t_n)\|_{L^q} =\infty,
		\]
		for any $q>\alpha+2$.
	\end{itemize}
\end{coro}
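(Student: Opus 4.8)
The plan is to obtain Corollary~\ref{corollary negative energy blowup criteria} as an immediate consequence of Theorem~\ref{theorem blowup criteria}: everything reduces to showing that the negative energy assumption forces the uniform bound $(\ref{blowup condition})$ on the functional $K(u(t))$.

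First I would rewrite $K(u(t))$ with the help of the energy. From the definition of $E$,
\[
\frac{1}{\alpha+2}\|u(t)\|_{L^{\alpha+2}}^{\alpha+2} = \frac12 \|(-\Delta)^{s/2}u(t)\|_{L^2}^2 - E(u(t)),
\]
and substituting this into $(\ref{define blowup quantity})$ yields
\[
K(u(t)) = \left( \frac{s}{2} - \frac{d\alpha}{8} \right) \|(-\Delta)^{s/2}u(t)\|_{L^2}^2 + \frac{d\alpha}{4}\, E(u(t)).
\]
Next I would use that, since $\gamma \geq s$, the solution conserves mass and energy (see Remark~\ref{remark blowup criteria}), so that $E(u(t)) = E(u_0) < 0$ for every $t \in [0,T)$. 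The hypothesis $\alpha \geq \frac{4s}{d}$ is equivalent to $d\alpha \geq 4s$, i.e. $\frac{s}{2} - \frac{d\alpha}{8} \leq 0$, hence the first term above is non-positive. Therefore
\[
\sup_{t\in[0,T)} K(u(t)) \leq \frac{d\alpha}{4}\, E(u_0) =: -\delta < 0 ,
\]
so $(\ref{blowup condition})$ holds with $\delta = -\frac{d\alpha}{4} E(u_0) > 0$. Since the remaining assumptions on $d$, $s$, $\gamma$, $\alpha$, and the regularity condition $(\ref{regularity assumption})$ in the non-even case, coincide with the hypotheses of Theorem~\ref{theorem blowup criteria}, applying that theorem produces the two stated alternatives and completes the argument.

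I do not expect a genuine obstacle here: the analytic content lies entirely in Theorem~\ref{theorem blowup criteria}, and the only points requiring a little care are that the bound on $K(u(t))$ must be uniform in $t$ — which is exactly what the conservation of energy provides, not merely the sign of $E(u_0)$ at the initial time — and that the condition $\gamma \geq s$ is what makes this conservation law available.
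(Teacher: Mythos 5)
Your proof is correct and follows essentially the same route as the paper: both reduce the corollary to Theorem \ref{theorem blowup criteria} by decomposing $K(u(t))$ via the conserved energy and using $d\alpha\geq 4s$ to discard a non-positive term. The only cosmetic difference is that you eliminate the potential term and bound $K(u(t))\leq \frac{d\alpha}{4}E(u_0)$, whereas the paper writes $K(u(t))=sE(u(t))-\frac{d\alpha-4s}{4(\alpha+2)}\|u(t)\|_{L^{\alpha+2}}^{\alpha+2}\leq sE(u_0)$; both yield a valid uniform $\delta>0$.
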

This corollary follows directly from Theorem $\ref{theorem blowup criteria}$ with the fact 
\[
K(u(t)) = sE(u(t)) - \frac{d\alpha-4s}{4(\alpha+2)} \|u(t)\|^{\alpha+2}_{L^{\alpha+2}} \leq sE(u_0) <0.
\]
Let us now consider $\gamma=s$. Note that in this case the regularity condition $(\ref{regularity assumption})$ is no longer needed. We firstly have the following blow-up criteria in the mass-critical case.
\begin{prop} [Mass-critical blow-up criteria] \label{proposition mass-critical blowup criteria}
	Let $d\geq 1$ and $s\in (1/2,1)$. Let $u_0 \in H^s$ be such that the corresponding (not necessary radial) solution to the mass-critical $(\ref{focusing FNLS})$, i.e. $\alpha=\frac{4s}{d}$ exists on the maximal time $[0,T)$. If $E(u_0)<0$, then one of the following statements holds true:
	\begin{itemize}
		\item $u(t)$ blows up in finite time, i.e. $T<+\infty$ and 
		\[
		\lim_{t\uparrow T} \|(-\Delta)^{s/2} u(t)\|_{L^2} =\infty;
		\]
		\item $u(t)$ blows up infinite time and 
		\[
		\sup_{t\in [0,+\infty)} \|u(t)\|_{L^q} =\infty,
		\]
		for any $q\geq \frac{4s}{d}+2$. In particular, there exists a time sequence $(t_n)_n$ such that $t_n \rightarrow +\infty$ and
		\[
		\lim_{n\rightarrow \infty} \|(-\Delta)^{s/2} u(t_n)\|_{L^2} =\infty.
		\]
	\end{itemize}
\end{prop}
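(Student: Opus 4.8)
The plan is to verify that Theorem~\ref{theorem blowup criteria} applies with $\gamma=s$, and then to sharpen the resulting dichotomy using the conservation laws together with a Sobolev embedding.

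First I would check the hypotheses of Theorem~\ref{theorem blowup criteria}. In the mass-critical case $\alpha=\frac{4s}{d}$ one has $\sct=\frac{d}{2}-\frac{2s}{\alpha}=0$, so $\gamma=s\geq\max\{s,\sct\}$; moreover $\lceil s\rceil=1\leq\alpha+1$, so the regularity assumption $(\ref{regularity assumption})$ is automatically satisfied and imposes no restriction on $\alpha$. Using the identity $K(u(t))=sE(u(t))-\frac{d\alpha-4s}{4(\alpha+2)}\|u(t)\|_{L^{\alpha+2}}^{\alpha+2}$ recorded just after Corollary~\ref{corollary negative energy blowup criteria} and the fact that $d\alpha-4s=0$ here, energy conservation gives $K(u(t))=sE(u(t))=sE(u_0)$ for all $t\in[0,T)$. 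Since $E(u_0)<0$, the blow-up condition $(\ref{blowup condition})$ holds with $\delta=-sE(u_0)>0$, so Theorem~\ref{theorem blowup criteria} yields: either $T<+\infty$, or $T=+\infty$ and, for every $q>\alpha+2$, $\sup_{t\geq0}\|u(t)\|_{L^q}=\infty$ and there is a sequence $t_n\to+\infty$ with $\|u(t_n)\|_{L^q}\to\infty$.

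In the finite-time case I would simply invoke the third item of Remark~\ref{remark blowup criteria}: since $\gamma=s>0=\sct$, the local theory gives $\lim_{t\uparrow T}\|u(t)\|_{H^s}=\infty$, and because $\|u(t)\|_{L^2}=\|u_0\|_{L^2}$ by mass conservation and $\|f\|_{H^s}^2\simeq\|f\|_{L^2}^2+\|(-\Delta)^{s/2}f\|_{L^2}^2$, this forces $\lim_{t\uparrow T}\|(-\Delta)^{s/2}u(t)\|_{L^2}=\infty$.

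The remaining and most delicate point is the infinite-time case, where Theorem~\ref{theorem blowup criteria} gives the conclusion only for $q>\alpha+2$ and I must still reach the endpoint $q=\alpha+2$ and deduce the divergence of $\|(-\Delta)^{s/2}u(t_n)\|_{L^2}$. The key observation is that the mass-critical power is energy-subcritical: when $d\geq2$ a short computation using $d>2s$ gives $\alpha+2=\frac{2d+4s}{d}<\frac{2d}{d-2s}$, while when $d=1$ one has $s>\frac12$; in either case $H^s\hookrightarrow L^{q_0}$ for some $q_0>\alpha+2$ (any $q_0\in(\alpha+2,\frac{2d}{d-2s}]$ when $d\geq2$, any $q_0>\alpha+2$ when $d=1$). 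If $\sup_{t\geq0}\|u(t)\|_{L^{\alpha+2}}$ were finite, then energy conservation, $\|(-\Delta)^{s/2}u(t)\|_{L^2}^2=2E(u_0)+\frac{2}{\alpha+2}\|u(t)\|_{L^{\alpha+2}}^{\alpha+2}$, would bound $\|u(t)\|_{H^s}$ uniformly in $t$, hence $\|u(t)\|_{L^{q_0}}$, contradicting $\sup_{t\geq0}\|u(t)\|_{L^{q_0}}=\infty$; therefore $\sup_{t\geq0}\|u(t)\|_{L^q}=\infty$ for every $q\geq\alpha+2$. Finally, applying Theorem~\ref{theorem blowup criteria} with this $q_0$ produces a sequence $t_n\to+\infty$ with $\|u(t_n)\|_{L^{q_0}}\to\infty$, and the embedding $H^s\hookrightarrow L^{q_0}$ together with mass conservation then gives $\|(-\Delta)^{s/2}u(t_n)\|_{L^2}\to\infty$ along that sequence. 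I expect the endpoint bookkeeping in this last paragraph to be the only place requiring care; everything else is a direct application of the general theorem.
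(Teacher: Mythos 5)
Your proposal is correct and follows essentially the same route as the paper: apply Theorem \ref{theorem blowup criteria} (via Corollary \ref{corollary negative energy blowup criteria}) with $\gamma=s$ using $K(u(t))=sE(u_0)<0$, handle the finite-time case with the blow-up alternative plus mass conservation, and in the infinite-time case use the Sobolev embedding $H^s\hookrightarrow L^{q_0}$ for some $q_0>\alpha+2$ together with the conservation of mass and energy to reach the endpoint $q=\alpha+2$ and the divergence of $\|(-\Delta)^{s/2}u(t_n)\|_{L^2}$. The only difference is cosmetic: you phrase the endpoint step as a contradiction, whereas the paper runs the same implications directly.
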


Now, let $\alpha_\star<\alpha<\alpha^\star$, where
\[
\alpha_\star:=\frac{4s}{d}, \quad \alpha^\star:= \left\{ 
\begin{array}{cl}
\infty &\text{if } d=1, \\
\frac{4s}{d-2s} &\text{if } d\geq 2.
\end{array}
\right.
\]
In the mass and energy intercritical case, we have the following blow-up criteria.
\begin{prop}[Mass and energy intercritical blow-up criteria] \label{proposition intercritical blowup criteria}
	Let $d\geq 1$ and $s\in(1/2,1)$. Let $u_0 \in H^s$ be such that the corresponding (not necessary radial) solution to the mass and energy intercritical $(\ref{focusing FNLS})$, i.e. $\alpha_\star<\alpha<\alpha^\star$ exists on the maximal time $[0,T)$. If either
	\[
	E(u_0)<0,
	\]
	or if $E(u_0) \geq 0$, we assume that
	\begin{align}
	\left\{
	\begin{array}{ccc}
	E^{\sce}(u_0) M^{s-\sce}(u_0) &<& E^{\sce}(Q) M^{s-\sce}(Q), \\
	\|(-\Delta)^{s/2} u_0\|^{\sce}_{L^2} \|u_0\|^{s-\sce}_{L^2} &>& \|(-\Delta)^{s/2}Q\|^{\sce}_{L^2} \|Q\|^{s-\sce}_{L^2},
	\end{array}
	\right.
	\label{blowup condition intercritical}
	\end{align} 
	where $Q$ is the unique (up to symmetries) positive radial solution to $(\ref{elliptic equation intercritical})$, then one of the following statements holds true:
	\begin{itemize}
		\item $u(t)$ blows up in finite time, i.e. $T<+\infty$ and 
		\[
		\lim_{t\uparrow T} \|(-\Delta)^{s/2} u(t)\|_{L^2} =\infty;
		\]
		\item $u(t)$ blows up infinite time and 
		\[
		\sup_{t\in[0,+\infty)} \|u(t)\|_{L^q} = \infty,
		\]
		for any $q \geq \alpha+2$. In particular, there exists a time sequence $(t_n)_n$ such that $t_n \rightarrow +\infty$ and 
		\[
		\lim_{n\rightarrow \infty} \|(-\Delta)^{s/2} u(t_n)\|_{L^2} =\infty.
		\]
	\end{itemize}
\end{prop}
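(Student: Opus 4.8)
\emph{Strategy.} The plan is to deduce the proposition from Theorem~\ref{theorem blowup criteria}, so that the only substantive task is to verify its hypothesis $(\ref{blowup condition})$, namely $\sup_{t\in[0,T)}K(u(t))\le-\delta<0$ for some $\delta>0$; the stated dichotomy will then follow after a short post-processing of the two alternatives in Theorem~\ref{theorem blowup criteria}. First note that in the intercritical range $\alpha_\star<\alpha<\alpha^\star$ one has $0<\sce<s$, hence $\max\{s,\sce\}=s=\gamma$ and $\gamma>\sct=\sce$, and for $\gamma=s$ the regularity condition $(\ref{regularity assumption})$ is not needed, so Theorem~\ref{theorem blowup criteria} is applicable. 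Throughout I will use the identity
\[
K(u(t))=\frac{d\alpha}{4}E(u_0)-\frac{d\alpha-4s}{8}\|(-\Delta)^{s/2}u(t)\|_{L^2}^2,
\]
which follows from $(\ref{define blowup quantity})$, the definition of $E$ and conservation of energy; recall $d\alpha-4s>0$ since $\alpha>\alpha_\star$. In the case $E(u_0)<0$ this identity immediately gives $K(u(t))\le\frac{d\alpha}{4}E(u_0)<0$ for all $t$, so $(\ref{blowup condition})$ holds with $\delta:=-\frac{d\alpha}{4}E(u_0)$.

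\emph{Variational input for the case $E(u_0)\ge0$.} Here I would run a fractional analogue of the Holmer--Roudenko energy-trapping argument. The two ingredients are: the sharp Gagliardo--Nirenberg inequality
\[
\|f\|_{L^{\alpha+2}}^{\alpha+2}\le C_{\mathrm{GN}}\,\|(-\Delta)^{s/2}f\|_{L^2}^{\frac{d\alpha}{2s}}\|f\|_{L^2}^{\alpha+2-\frac{d\alpha}{2s}},
\]
whose optimal constant is attained by the ground state $Q$ of $(\ref{elliptic equation intercritical})$; and the Pohozaev identities for $Q$, obtained by pairing $(\ref{elliptic equation intercritical})$ with $Q$ and with $x\cdot\nabla Q$. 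The latter give $\|Q\|_{L^{\alpha+2}}^{\alpha+2}=\frac{2s(\alpha+2)}{d\alpha}\|(-\Delta)^{s/2}Q\|_{L^2}^2$, hence $K(Q)=0$ and $E(Q)=\frac{d\alpha-4s}{2d\alpha}\|(-\Delta)^{s/2}Q\|_{L^2}^2$, and they also pin down $C_{\mathrm{GN}}$ in terms of $Q$.

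\emph{Energy trapping and conclusion of $(\ref{blowup condition})$.} Set $\sigma:=(s-\sce)/\sce>0$ and $g(t):=\|(-\Delta)^{s/2}u(t)\|_{L^2}^{2}\,M(u_0)^{\sigma}$. Inserting the sharp Gagliardo--Nirenberg inequality into the conserved energy and using conservation of mass, a scaling computation gives $E(u_0)M(u_0)^{\sigma}\ge\Psi(g(t))$ for the fixed profile $\Psi(y)=\tfrac12 y-\tfrac{C_{\mathrm{GN}}}{\alpha+2}y^{d\alpha/(4s)}$; since $d\alpha/(4s)>1$ this $\Psi$ is strictly increasing then strictly decreasing on $[0,\infty)$, and the identities of the previous paragraph force its maximum to equal $E(Q)M(Q)^{\sigma}>0$, attained at $g_Q:=\|(-\Delta)^{s/2}Q\|_{L^2}^{2}\,M(Q)^{\sigma}$. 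Raising the two lines of $(\ref{blowup condition intercritical})$ to suitable powers, the hypotheses read precisely $g(0)>g_Q$ and $E(u_0)M(u_0)^{\sigma}=\omega\,\max\Psi$ for a fixed $\omega\in[0,1)$. Since $\Psi|_{[g_Q,\infty)}$ is a strictly decreasing bijection onto $(-\infty,\max\Psi]$ while $\Psi(g(t))\le\omega\max\Psi<\max\Psi$ for all $t$, a continuity argument forbids $g(\cdot)$ from ever reaching $g_Q$, whence $g(t)\ge y_\omega:=\bigl(\Psi|_{[g_Q,\infty)}\bigr)^{-1}(\omega\max\Psi)>g_Q$ for all $t\in[0,T)$. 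By conservation of mass this is a uniform lower bound $\|(-\Delta)^{s/2}u(t)\|_{L^2}^{2}\ge H_\ast:=y_\omega M(u_0)^{-\sigma}$, with $H_\ast>\|(-\Delta)^{s/2}Q\|_{L^2}^{2}\,(M(Q)/M(u_0))^{\sigma}$. Feeding this into the displayed identity for $K$, and using $E(Q)=\tfrac{d\alpha-4s}{2d\alpha}\|(-\Delta)^{s/2}Q\|_{L^2}^{2}$ together with $E(u_0)<E(Q)(M(Q)/M(u_0))^{\sigma}$ (the first line of $(\ref{blowup condition intercritical})$), one obtains $\tfrac{d\alpha-4s}{8}H_\ast>\tfrac{d\alpha}{4}E(u_0)$, hence $\sup_{t\in[0,T)}K(u(t))\le-\delta<0$ with $\delta:=\tfrac{d\alpha-4s}{8}H_\ast-\tfrac{d\alpha}{4}E(u_0)>0$.

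\emph{Post-processing and main obstacle.} With $(\ref{blowup condition})$ verified, Theorem~\ref{theorem blowup criteria} gives: either $T<+\infty$, or $T=+\infty$ and $\sup_{t}\|u(t)\|_{L^q}=\infty$ for every $q>\alpha+2$. In the first case, $\gamma=s>\sct$ and the last item of Remark~\ref{remark blowup criteria} give $\lim_{t\uparrow T}\|u(t)\|_{H^s}=\infty$, which by conservation of mass upgrades to $\lim_{t\uparrow T}\|(-\Delta)^{s/2}u(t)\|_{L^2}=\infty$. In the second case, $\alpha<\alpha^\star$ yields $\alpha+2<\frac{2d}{d-2s}$, so fixing $q\in\bigl(\alpha+2,\frac{2d}{d-2s}\bigr)$ and applying $\|f\|_{L^q}\lesssim\|(-\Delta)^{s/2}f\|_{L^2}^{a}\|f\|_{L^2}^{1-a}$ (some $a\in(0,1)$) together with conservation of mass forces $\|(-\Delta)^{s/2}u(t_n)\|_{L^2}\to\infty$ along any sequence $t_n\to+\infty$ with $\|u(t_n)\|_{L^q}\to\infty$; then conservation of energy, in the form $\tfrac12\|(-\Delta)^{s/2}u(t)\|_{L^2}^{2}=E(u_0)+\tfrac{1}{\alpha+2}\|u(t)\|_{L^{\alpha+2}}^{\alpha+2}$, gives $\|u(t_n)\|_{L^{\alpha+2}}\to\infty$ as well, so $\sup_{t}\|u(t)\|_{L^q}=\infty$ for every $q\ge\alpha+2$. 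The step I expect to be the main obstacle is the energy-trapping estimate in the case $E(u_0)\ge0$: one must align the sharp Gagliardo--Nirenberg constant with the Pohozaev identities for the fractional ground state $Q$ so that the profile $\Psi$ is maximized exactly at $g_Q$, and then convert $\omega<1$ into a lower bound $g(t)\ge y_\omega$ that is \emph{uniform} in $t$ and strictly above $g_Q$; everything else is bookkeeping with the conservation laws.
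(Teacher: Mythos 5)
Your proposal is correct and follows essentially the same route as the paper: reduce to verifying $\sup_t K(u(t))\le-\delta$ via the sharp Gagliardo--Nirenberg inequality, the Pohozaev identities for $Q$, and a continuity (energy-trapping) argument, then invoke Theorem \ref{theorem blowup criteria} and post-process with the Sobolev embedding and the conservation laws. The only cosmetic difference is how the strict gap is extracted: you invert the profile $\Psi$ to get a quantitative bound $g(t)\ge y_\omega>g_Q$, whereas the paper keeps only $g(t)>g_Q$ and instead writes $E(u_0)M^\sigma(u_0)\le(1-\rho)E(Q)M^\sigma(Q)$ — both yield a valid $\delta>0$.
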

Finally, we have the following blow-up criteria in the energy-critical case.
\begin{prop}[Energy-critical blow-up criteria] \label{proposition energy-critical blowup criteria}
Let $d\geq 2$ and $s\in (1/2,1)$. Let $u_0 \in H^s$ be such that the corresponding (not necessary radial) solution to the energy-critical $(\ref{focusing FNLS})$, i.e. $\alpha=\frac{4s}{d-2s}$ exists on the maximal time $[0,T)$. If either
\[
E(u_0) <0,
\]
or if $E(u_0)\geq 0$, we assume that
\begin{align}
\left\{
\begin{array}{ccc}
E(u_0) &<& E(W), \\
\|(-\Delta)^{s/2} u_0\|_{L^2} &>& \|(-\Delta)^{s/2} W\|_{L^2},
\end{array}
\right.
\label{blowup condition energy-critical}
\end{align}
where $W$ is the unique (up to symmetries) positive radial solution to $(\ref{elliptic equation energy-critical})$, then one of the following statements holds true:
\begin{itemize}
	\item $u(t)$ blows up in finite time, i.e. $T<+\infty$;
	\item $u(t)$ blows up infinite time and
	\[
	\sup_{t\in [0, +\infty)} \|u(t)\|_{L^q} =\infty,
	\]
	for any $q>\frac{2d}{d-2s}$. In particular, there exists a time sequence $(t_n)_n$ such that $t_n \rightarrow +\infty$ and 
	\[
	\lim_{n \rightarrow \infty} \|u(t_n)\|_{L^q} =\infty,
	\]
	for any $q>\frac{2d}{d-2s}$.
\end{itemize}
\end{prop}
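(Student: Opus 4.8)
The plan is to deduce Proposition~\ref{proposition energy-critical blowup criteria} from the general criterion of Theorem~\ref{theorem blowup criteria}, so the only real task is to verify the uniform negativity hypothesis $(\ref{blowup condition})$: \emph{there exists $\delta>0$ such that $K(u(t))\le-\delta$ for all $t\in[0,T)$}. First one checks the structural hypotheses of Theorem~\ref{theorem blowup criteria}: in the energy-critical regime $\alpha=\frac{4s}{d-2s}$ one has $\sce=\sct=s$, hence $\gamma=s=\max\{s,\sce\}$; moreover $\lceil s\rceil=1\le\alpha+1$, so $(\ref{regularity assumption})$ holds automatically and no parity assumption on $\alpha$ is needed. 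Next, using $(\ref{define blowup quantity})$, the energy-critical identities $\frac{d\alpha}{\alpha+2}=2s$ and $d\alpha-4s=\frac{8s^2}{d-2s}>0$, together with conservation of energy, one rewrites, for $t\in[0,T)$,
\[
K(u(t))=sE(u_0)-\frac{d\alpha-4s}{4(\alpha+2)}\|u(t)\|_{L^{\alpha+2}}^{\alpha+2}
=\frac{s(\alpha+2)}{2}E(u_0)-\frac{s\alpha}{4}\|(-\Delta)^{s/2}u(t)\|_{L^2}^2 .
\]
In particular, if $E(u_0)<0$ the first form gives $K(u(t))\le sE(u_0)<0$ (this is also the content of Corollary~\ref{corollary negative energy blowup criteria}), and Theorem~\ref{theorem blowup criteria} applied with $\delta=-sE(u_0)$ yields the conclusion; note $\alpha+2=\frac{2d}{d-2s}$, so the stated threshold $q>\frac{2d}{d-2s}$ is precisely $q>\alpha+2$.

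It remains to handle $E(u_0)\ge0$ under $(\ref{blowup condition energy-critical})$. Here I would invoke the sharp fractional Sobolev inequality $\|f\|_{L^{\alpha+2}}^{\alpha+2}\le C_\star\|(-\Delta)^{s/2}f\|_{L^2}^{\alpha+2}$, whose optimizers are, up to the symmetries of the equation, exactly the solutions $W$ of $(\ref{elliptic equation energy-critical})$; pairing $(\ref{elliptic equation energy-critical})$ with $W$ gives $\|(-\Delta)^{s/2}W\|_{L^2}^2=\|W\|_{L^{\alpha+2}}^{\alpha+2}$, whence $C_\star\|(-\Delta)^{s/2}W\|_{L^2}^{\alpha}=1$ and $E(W)=\big(\tfrac12-\tfrac1{\alpha+2}\big)\|(-\Delta)^{s/2}W\|_{L^2}^2=\frac{\alpha}{2(\alpha+2)}\|(-\Delta)^{s/2}W\|_{L^2}^2$. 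Write $y(t)=\|(-\Delta)^{s/2}u(t)\|_{L^2}^2$ and $A=\|(-\Delta)^{s/2}W\|_{L^2}^2$. Combining energy conservation with the Sobolev inequality gives $E(u_0)=E(u(t))\ge g(y(t))$ with $g(z):=\tfrac12 z-\tfrac{C_\star}{\alpha+2}z^{(\alpha+2)/2}$, and a short computation shows $g$ is increasing on $[0,A]$, decreasing on $[A,\infty)$, with $\max g=g(A)=E(W)$. Since the solution lies in $C([0,T),H^s)$, $t\mapsto y(t)$ is continuous; as $y(0)>A$ by $(\ref{blowup condition energy-critical})$ and $g(y(t))\le E(u_0)<E(W)=g(A)$ for all $t$, a standard continuity argument forbids $y(t)$ from ever reaching $A$, so $y(t)>A$ on $[0,T)$. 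Plugging this into the second form of $K$ above gives
\[
K(u(t))<\frac{s(\alpha+2)}{2}E(u_0)-\frac{s\alpha}{4}A=:-\delta ,
\]
and $\delta>0$ is exactly the inequality $E(u_0)<E(W)$ after substituting $E(W)=\frac{\alpha}{2(\alpha+2)}A$; the first line of $(\ref{blowup condition energy-critical})$ is precisely what makes this go through. Applying Theorem~\ref{theorem blowup criteria} once more completes the proof.

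I expect the only delicate point to be the variational input of the second step — identifying $W$ as the extremizer of the fractional Sobolev inequality, extracting the Pohozaev-type identities $\|(-\Delta)^{s/2}W\|_{L^2}^2=\|W\|_{L^{\alpha+2}}^{\alpha+2}$ and $E(W)=\frac{\alpha}{2(\alpha+2)}\|(-\Delta)^{s/2}W\|_{L^2}^2$, and trapping $y(t)$ above $A$. This is, however, the classical Kenig--Merle-type analysis and is insensitive to the fractional setting; all the genuine analytic difficulty has already been absorbed into Theorem~\ref{theorem blowup criteria} through the localized virial estimates.
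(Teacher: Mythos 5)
Your proposal is correct and follows essentially the same route as the paper: reduce to Theorem~\ref{theorem blowup criteria} by verifying $(\ref{blowup condition})$, using the sharp Sobolev inequality with the Pohozaev-type identities $\|(-\Delta)^{s/2}W\|_{L^2}^2=\|W\|_{L^{s^\star}}^{s^\star}$ and $E(W)=\frac{s}{d}\|(-\Delta)^{s/2}W\|_{L^2}^2$, a monotonicity/continuity trapping argument to keep $\|(-\Delta)^{s/2}u(t)\|_{L^2}$ above $\|(-\Delta)^{s/2}W\|_{L^2}$, and the identity $K(u(t))=\frac{ds}{d-2s}E(u_0)-\frac{s^2}{d-2s}\|(-\Delta)^{s/2}u(t)\|_{L^2}^2$. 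The only cosmetic differences are that you parametrize by $\|(-\Delta)^{s/2}u\|_{L^2}^2$ rather than $\|(-\Delta)^{s/2}u\|_{L^2}$ and extract $\delta$ directly instead of via the auxiliary parameter $\rho$ with $E(u_0)\le(1-\rho)E(W)$.
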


The paper is oganized as follows. In Section $\ref{section preliminaries}$, we recall some preliminaries related to the fractional nonlinear Schr\"odinger equation such as Strichartz estimates and nonlinear estimates. 
In Section $\ref{section local theory}$, we recall the local well-posedness for $(\ref{focusing FNLS})$ in general Sobolev spaces $H^\gamma$ with non-radial and radial initial data. In Section $\ref{section virial estimates}$, we prove various virial-type estimates related to the equation. The blow-up criteria for non-radial solutions of $(\ref{focusing FNLS})$ will be proved in Section $\ref{section blowup criteria}$.
\section{Preliminaries} \label{section preliminaries}
\setcounter{equation}{0}
\subsection{Strichartz estimates}
In this subsection, we recall Strichartz estimates for the fractional Schr\"odinger equation. Let $I \subset \R$ and $p,q \in [1, \infty]$. We define the Strichartz norm
\[
\|f\|_{L^p(I, L^q)} := \Big( \int_I \Big( \int_{\R^d} |f(t,x)|^q dx \Big)^{\frac{p}{q}}\Big)^{\frac{1}{p}},
\]
with a usual modification when either $p$ or $q$ are infinity. Let $\chi_0$ be a bump function supported in $\{\xi \in \R^d \ : \ |\xi| \leq 2\}$ and $\chi_0(\xi) =1$ for $|\xi| \leq 1$. Set $\chi(\xi) = \chi_0(\xi) - \chi_0(2\xi)$. We denote the Littlewood-Paley projections $P_0 := \chi_0(D), P_N := \chi(N^{-1}D)$ with $N= 2^k, k \in \Z$, where $\chi_0(D)f = \mathcal{F}^{-1}[\chi_0 \mathcal{F}(f)]$ and similarly for $\chi(N^{-1}D)$ with $\mathcal{F}$ and $\mathcal{F}'$ the Fourier transform and its inverse respectively. Given $\gamma \in \R$ and $1\leq q \leq \infty$, one defines the Besov space $B^\gamma_q$ as
\[
B^\gamma_q:= \left\{ u \in \mathscr{S}' \ : \ \|u\|_{B^\gamma_q} := \|P_0 u\|_{L^q} + \Big( \sum_{N \in 2^{\N}} N^{2\gamma} \|P_N u\|^2_{L^q}\Big)^{1/2} <\infty \right\},
\]
where $\mathscr{S}'$ is the space of tempered distributions. There are several types of Strichartz estimates for the Schr\"odinger operator $e^{-it(-\Delta)^s}$. We recall below two-types of Strichartz estimates for the fractional Schr\"odinger equation:

	\underline{For general data} (see e.g. \cite{ChoOzawaXia} or \cite{Dinh-fract}): the following estimates hold for $d\geq 1$ and $s \in (0,1) \backslash \{1/2\}$,
	\begin{align}
	\|e^{-it(-\Delta)^s} \psi\|_{L^p(\R, L^q)} &\lesssim \||\nabla|^{\gamma_{p,q}} \psi\|_{L^2}, \label{strichartz estimate homogeneous}\\
	\Big| \int_0^t e^{-i(t-\tau) (-\Delta)^s} f(\tau) d\tau\Big|_{L^p(\R, L^q)} &\lesssim \||\nabla|^{\gamma_{p,q} -\gamma_{a', b'} - 2s} f\|_{L^{a'}(\R, L^{b'})}, \label{strichartz estimate inhomogeneous}
	\end{align}
	where $(p,q)$ and $(a,b)$ are Schr\"odinger admissible, i.e.
	\[
	p \in [2,\infty], \quad q \in [2, \infty), \quad (p,q, d) \ne (2,\infty, 2), \quad \frac{2}{p} + \frac{d}{q} \leq \frac{d}{2},
	\]
	and 
	\[
	\gamma_{p,q} = \frac{d}{2} -\frac{d}{q} -\frac{2s}{p},
	\]
	similarly for $\gamma_{a', b'}$. Here $(a, a')$ and $(b,b')$ are conjugate pairs. It is worth noticing that for $s \in (0,1) \backslash \{1/2\}$ the admissible condition $\frac{2}{p} +\frac{d}{q} \leq \frac{d}{2}$ implies $\gamma_{p,q}>0$ for all admissible pairs $(p,q)$ except $(p,q)=(\infty, 2)$. This means that the above Strichartz estimates have a loss of derivatives. In the local theory, this loss of derivatives makes the problem more difficult, and leads to a weak local well-posedness result comparing to the nonlinear Schr\"odinger equation (see Section $\ref{section local theory}$). 
	
	\underline{For radially symmetric data} (see e.g. \cite{Ke}, \cite{GuoWang} or \cite{ChoLee}): the estimates $(\ref{strichartz estimate homogeneous})$ and $(\ref{strichartz estimate inhomogeneous})$ hold true for $d\geq 2, s \in (0,1) \backslash \{1/2\}$ and $(p,q), (a,b)$ satisfy the radial Sch\"odinger admissible condition:
	\begin{align*}
	p \in [2, \infty], \quad q \in [2, \infty), \quad (p,q) \ne \left(2, \frac{4d-2}{2d-3}\right), \quad \frac{2}{p} + \frac{2d-1}{q} \leq \frac{2d-1}{2}.
	\end{align*}
	Note that the admissible condition $\frac{2}{p} + \frac{2d-1}{q} \leq \frac{2d-1}{2}$ allows us to choose $(p,q)$ so that $\gamma_{p,q} =0$. More precisely, we have for $d\geq 2$ and $\frac{d}{2d-1}\leq s<1$ \footnote{This condition follows by pluging $\gamma_{p,q}=0$ to $\frac{2}{p}+\frac{2d-1}{q} \leq \frac{2d-1}{2}$.},
	\begin{align}
	\|e^{-it(-\Delta)^s} \psi\|_{L^p(\R, L^q)} &\lesssim \|\psi\|_{L^2}, \label{radial strichartz estimate homogeneous}\\
	\Big| \int_0^t e^{-i(t-\tau) (-\Delta)^s} f(\tau) d\tau\Big|_{L^p(\R, L^q)} &\lesssim \| f\|_{L^{a'}(\R, L^{b'})}, \label{radial strichartz estimate inhomogeneous}
	\end{align}
	where $\psi$ and $f$ are radially symmetric and $(p,q), (a,b)$ satisfy the fractional admissible condition, 
	\begin{align}
	p \in [2,\infty], \quad q \in [2, \infty), \quad (p,q) \ne \left(2, \frac{4d-2}{2d-3}\right), \quad \frac{2s}{p} + \frac{d}{q} = \frac{d}{2}. \label{fractional admissible}
	\end{align}
	These Strichartz estimates with no loss of derivatives allow us to give a similar local well-posedness result as for the nonlinear Schr\"odinger equation (see again Section $\ref{section local theory}$). 

\subsection{Nonlinear estimates} 
We recall the following fractional chain rule which is needed in the local well-posedness for $(\ref{focusing FNLS})$. 
\begin{lem}[Fractional chain rule \cite{ChristWeinstein}] \label{lem nonlinear estimate}
	Let $F \in C^1(\C, \C)$ and $\gamma \in (0,1)$. Then for $1<q \leq q_2 <\infty$ and $1<q_1 \leq \infty$ satisfying $\frac{1}{q}=\frac{1}{q_1}+\frac{1}{q_2}$,
	\[
	\||\nabla|^\gamma F(u)\|_{L^q} \lesssim \|F'(u)\|_{L^{q_1}} \||\nabla|^\gamma u\|_{L^{q_2}}.
	\]
\end{lem}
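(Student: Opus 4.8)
The plan is to reduce the estimate to a pointwise inequality for a suitable square function and then to decouple the two factors by H\"older's inequality together with maximal-function bounds. For $0<\gamma<1$ and $1<p<\infty$ one has Stein's square-function characterization of the Riesz potential space,
\[
\||\nabla|^\gamma f\|_{L^p} \sim \|D_\gamma f\|_{L^p}, \qquad D_\gamma f(x) := \Big(\int_{\R^d} \frac{|f(x+y)-f(x)|^2}{|y|^{d+2\gamma}}\,dy\Big)^{1/2},
\]
which is valid precisely because $\gamma \in (0,1)$. Applying this with $p=q$ to $F(u)$ and with $p=q_2$ to $u$ (both admissible since $1<q\le q_2<\infty$), it suffices to prove the pointwise-to-$L^q$ bound $\|D_\gamma(F(u))\|_{L^q} \lesssim \|F'(u)\|_{L^{q_1}}\,\|D_\gamma u\|_{L^{q_2}}$. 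First I would dispose of the endpoint $q_1=\infty$, where $q=q_2$: the mean value theorem gives $|F(u(x+y))-F(u(x))| \le \|F'\|_{L^\infty}\,|u(x+y)-u(x)|$, so that $D_\gamma(F(u))(x) \le \|F'(u)\|_{L^\infty}\,D_\gamma u(x)$ pointwise and the claim is immediate.

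For $1<q_1<\infty$ the two factors must genuinely be separated. Writing the complex mean value theorem in the form
\[
F(u(x+y))-F(u(x)) = \Big(\int_0^1 F'\big(u(x)+t(u(x+y)-u(x))\big)\,dt\Big)\,(u(x+y)-u(x)),
\]
the task becomes estimating the bracketed factor, in which $F'$ is evaluated at points of the segment joining $u(x)$ and $u(x+y)$ rather than at $u(x)$ itself. The key device is to control these intermediate values by the Hardy--Littlewood maximal function $M(F'(u))(x)$: one splits the $y$-integral at the scale on which $|u(x+y)-u(x)|$ is comparable to a local average of $u$, bounds the intermediate value of $F'$ on the near region by $M(F'(u))(x)$ up to a controllable oscillation term, and treats the far region directly. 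This yields, schematically,
\[
D_\gamma(F(u))(x) \;\lesssim\; M\!\left(F'(u)\right)(x)\;\cdot\; \mathcal{N}_\gamma u(x),
\]
where $\mathcal{N}_\gamma u$ is a maximal-type quantity satisfying $\|\mathcal{N}_\gamma u\|_{L^{q_2}} \lesssim \|D_\gamma u\|_{L^{q_2}} \sim \||\nabla|^\gamma u\|_{L^{q_2}}$.

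I would then conclude by H\"older's inequality with $\frac1q=\frac1{q_1}+\frac1{q_2}$, estimating $\|M(F'(u))\|_{L^{q_1}} \lesssim \|F'(u)\|_{L^{q_1}}$ by boundedness of $M$ on $L^{q_1}$ (which uses $q_1>1$) and the second factor by the Fefferman--Stein vector-valued maximal inequality on $L^{q_2}$ (which uses $q_2>1$, equivalently $q>1$); the hypotheses $1<q\le q_2<\infty$ and $1<q_1\le\infty$ are exactly what these tools require. I expect the main obstacle to be the middle step, namely bounding $F'$ evaluated at the intermediate points of the segment by $M(F'(u))$: for a merely $C^1$ function $F$ there is no direct pointwise control of $F'(\text{intermediate})$ by $F'(u(x))$, so one must combine the $C^1$ hypothesis, the choice of splitting scale, and the maximal-function machinery with care, and in particular verify that the oscillation and far-region error terms are summable without demanding more regularity of $F$ than $F\in C^1$.
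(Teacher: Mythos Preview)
The paper does not prove this lemma; it is simply quoted from Christ--Weinstein with no argument given. Your proposal therefore goes well beyond what the paper itself does. The overall strategy you outline --- Stein's square-function characterization $\||\nabla|^\gamma f\|_{L^p}\sim\|D_\gamma f\|_{L^p}$ for $0<\gamma<1$, the mean value theorem for $F$, and H\"older plus maximal-function bounds to decouple the two factors --- is one of the standard routes to fractional chain rules (closer in spirit to Kato--Ponce and Kenig--Ponce--Vega than to the original Christ--Weinstein argument, which proceeds via Littlewood--Paley decomposition). Your treatment of the endpoint $q_1=\infty$ is correct and complete.

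You are right, however, to flag the ``intermediate value'' step as the crux, and as written it is a genuine gap rather than a routine detail. For a function $F$ that is merely $C^1$, the schematic pointwise bound $D_\gamma(F(u))(x)\lesssim M(F'(u))(x)\cdot\mathcal N_\gamma u(x)$ is not available: $F'\big(u(x)+t(u(x+y)-u(x))\big)$ need not be controlled by $M(F'(u))(x)$, because $F'$ has no quantitative modulus of continuity, and ``splitting the $y$-integral at a scale where the oscillation is comparable to a local average'' does not by itself close this loop. The clean fixes are either (i) to run the argument through a Littlewood--Paley telescoping $F(P_{\le N}u)-F(P_{\le N/2}u)$, where the increment $P_N u$ is small in $L^\infty$ on each piece and the derivative can be frozen with a controllable error --- this is essentially what Christ--Weinstein do --- or (ii) to approximate $F$ by smooth $F_\epsilon$ and verify that the implicit constant depends only on $\|F'\|_{L^\infty}$-type quantities and not on the modulus of continuity of $F'$, then pass to the limit. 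Either route requires real work beyond what you have sketched; your $\mathcal N_\gamma u$ is left undefined and the ``oscillation and far-region error terms'' you allude to cannot be bounded for general $C^1$ nonlinearities without one of these devices.
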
 
\section{Local well-posedness} \label{section local theory}
In this section, we recall the local well-posedness for $(\ref{focusing FNLS})$ in Sobolev spaces. The proof is based on the contraction mapping argument using Strichartz estimates. Due to the loss of derivatives in Strichartz estimates, we thus consider separately two cases: non-radial initial data and radially symmetric initial data. 
\subsection{Non-radial initial data.}
We have the following local well-posedness for $(\ref{focusing FNLS})$ in Sobolev spaces due to \cite{HongSire} (see also \cite{Dinh-fract}). Let us start with the local well-posedness in the sub-critical case, i.e. $\gamma>\sct$.
\begin{prop}[Non-radial local theory I \cite{HongSire, Dinh-fract}] \label{proposition lwp non radial}
	Let $d\geq 1$, $s\in (1/2,1)$ and $\alpha>0$. Let $\gamma\geq 0$ be such that
	\[
	\gamma > \left\{
	\begin{array}{cl}
	1/2 -2s/\max(\alpha, 4) &\text{if } d=1,\\
	d/2 - 2s/\max(\alpha,2) &\text{if } d\geq 2,
	\end{array}
	\right.
	\]
	and also, if $\alpha$ is not an even integer, $(\ref{regularity assumption})$ holds. Then for any $u_0 \in H^\gamma$, there exist $T\in (0,+\infty]$ and a unique solution to $(\ref{focusing FNLS})$ satisfying
	\[
	u \in C([0,T), H^\gamma) \cap L^p_{\emph{loc}}([0,T), L^\infty),
	\]
	for some
	\[
	p > \left\{
	\begin{array}{cl}
	\max(\alpha, 4) &\text{if } d=1,\\
	\max(\alpha,2) &\text{if } d\geq 2.
	\end{array}
	\right.
	\]
	Moreover, the following properties hold:
	\begin{itemize}
		\item If $T<\infty$, then $\lim_{t\uparrow T} \|u(t)\|_{H^\gamma} =\infty$;
		\item There is conservation of mass, i.e. $M(u(t)) = M(u_0)$ for all $t\in [0,T)$;
		\item If $\gamma\geq s$, then the energy is conserved, i.e. $E(u(t)) = E(u_0)$ for all $t\in [0,T)$.
	\end{itemize}	
\end{prop}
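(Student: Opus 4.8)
The plan is to prove the proposition by a contraction mapping argument on the Duhamel map
\[
\Phi(u)(t) := e^{-it(-\Delta)^s}u_0 - i\int_0^t e^{-i(t-\tau)(-\Delta)^s}\big(|u|^{\alpha}u\big)(\tau)\,d\tau,
\]
using the (lossy) Strichartz estimates $(\ref{strichartz estimate homogeneous})$--$(\ref{strichartz estimate inhomogeneous})$ together with the fractional chain rule of Lemma $\ref{lem nonlinear estimate}$ to control the nonlinearity $F(u)=|u|^\alpha u$.

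First I would fix the exponents. The point is that the strict lower bound imposed on $\gamma$ is exactly what allows one to pick a Schr\"odinger-admissible pair $(p,q)$ with $q<\infty$, with $p$ strictly larger than $\max(\alpha,4)$ (when $d=1$) or $\max(\alpha,2)$ (when $d\ge 2$), and with a derivative loss $\gamma_{p,q}=\frac d2-\frac dq-\frac{2s}{p}$ small enough that there is some $\sigma$ with $d/q<\sigma$ and $\sigma+\gamma_{p,q}\le\gamma$; indeed $\sigma+\gamma_{p,q}>\frac dq+\gamma_{p,q}=\frac d2-\frac{2s}{p}$, so the existence of such a choice amounts precisely to the hypothesis on $\gamma$ once $p$ is let tend to its minimal admissible value. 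The Sobolev embedding $W^{\sigma,q}\hookrightarrow L^\infty$ (valid since $\sigma>d/q$) then turns $L^p_t$ control of $u$ in $W^{\sigma,q}$ into the $L^p_t L^\infty_x$ bound stated in the proposition. I would run the fixed point in a ball of
\[
X:=C([0,T],H^\gamma)\cap L^p\big([0,T],W^{\sigma,q}\cap L^\infty\big),
\]
metrized by the weaker distance $d(u,v)=\|u-v\|_{C_tL^2}+\|u-v\|_{L^p_tL^q}$ (a two-norm scheme in the spirit of Kato), so as not to demand more regularity of $F$ than it has.

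The core step is the nonlinear estimate. When $\alpha$ is an even integer, $F(u)=|u|^\alpha u$ is a polynomial in $u$ and $\overline u$, hence smooth, and no restriction is needed; otherwise $(\ref{regularity assumption})$ guarantees that $F$ is differentiable $\lceil\gamma\rceil$ times (with Lipschitz top derivative in the borderline case), which is what permits distributing $\lceil\gamma\rceil-1$ integer derivatives by the Leibniz rule and the remaining fractional part by Lemma $\ref{lem nonlinear estimate}$ (for $0<\gamma<1$ the lemma applies directly, with a H\"older-continuity variant of it when $0<\alpha<1$). Combining this with H\"older in space and then H\"older in time on $[0,T]$, and plugging into the inhomogeneous Strichartz estimate along a suitable dual admissible pair, one obtains a bound of the form $\|\Phi(u)\|_X\le C\|u_0\|_{H^\gamma}+CT^{\theta}\|u\|_X^{\alpha+1}$ with $\theta>0$ (the positive power of $T$ is gained precisely because $p$ was chosen strictly above its critical value), and similarly $d(\Phi(u),\Phi(v))\le CT^\theta(\|u\|_X^\alpha+\|v\|_X^\alpha)\,d(u,v)$. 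Choosing $M=2C\|u_0\|_{H^\gamma}$ and then $T=T(\|u_0\|_{H^\gamma})>0$ small enough that $CT^\theta M^\alpha\le\tfrac12$ makes $\Phi$ a contraction on the ball of radius $M$ in $(X,d)$, which produces the unique solution. Since the existence time depends only on $\|u_0\|_{H^\gamma}$, the usual continuation argument yields the blow-up alternative $\lim_{t\uparrow T}\|u(t)\|_{H^\gamma}=\infty$ if $T<\infty$. Conservation of mass follows by multiplying the equation by $\overline u$, integrating and taking imaginary parts; conservation of energy when $\gamma\ge s$ follows since then $E$ is finite on $H^\gamma$ (Sobolev embedding, in the subcritical range $\gamma>\sct$) — both identities are first proved for smooth data and then passed to the limit using the continuous dependence built into the scheme.

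The main obstacle is managing the loss of derivatives in the non-radial Strichartz estimates: the whole argument rests on exhibiting exponents $(p,q)$ that are simultaneously admissible with $q<\infty$, have $p$ large enough both to produce a positive power of $T$ against $\|u\|_{L^p_tL^\infty_x}^\alpha$ and to keep $\gamma_{p,q}$ below the regularity surplus $\gamma-\sct$, and are compatible with the derivative budget $\lceil\gamma\rceil\le\alpha+1$ in the Leibniz/chain-rule step. Verifying that this triple requirement admits a solution — and in particular tracking the derivative loss through the inhomogeneous estimate, where a naive diagonal choice of pairs would cost a second loss — is the delicate bookkeeping underlying the proof.
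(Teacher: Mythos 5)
Your proposal is correct and follows essentially the same route as the paper, which defers the detailed proof to \cite{HongSire, Dinh-fract} but describes exactly this strategy: a Kato-type two-norm contraction argument via the lossy Strichartz estimates $(\ref{strichartz estimate homogeneous})$--$(\ref{strichartz estimate inhomogeneous})$, with the derivative loss $\gamma_{p,q}$ absorbed by the Sobolev embedding into $L^\infty$ and the nonlinearity handled by the fractional chain rule under $(\ref{regularity assumption})$. Your accounting of why the threshold on $\gamma$ arises as $p$ tends to its minimal value $\max(\alpha,4)$ (for $d=1$) or $\max(\alpha,2)$ (for $d\geq 2$) matches the statement, and the treatment of the blow-up alternative and conservation laws is the standard one used in the paper's radial analogue.
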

We refer the reader to \cite{Dinh-fract} (see also \cite{HongSire}) for the proof of above result. The proof is based on Strichartz estimates and the contraction mapping argument. Note that in the non-radial case, there is a loss of derivatives in Strichartz estimates. Fortunately, this loss of derivatives can be compensated for by using the Sobolev embedding. However, there is still a gap between $\sct$ and $1/2 -2s/\max(\alpha, 4)$ when $d=1$, and $d/2 - 2s/\max(\alpha,2)$ when $d\geq 2$. We also have the local well-posedness in the critical case, i.e. $\gamma=\sct$.
\begin{prop} [Non-radial local theory II \cite{HongSire, Dinh-fract}] \label{proposition lwp non radial critical}
	Let $d\geq 1$, $s\in (1/2,1)$ and 
	\[
	\alpha> \left\{
	\begin{array}{cl}
	4 &\text{if } d=1, \\
	2 &\text{if } d\geq 2,
	\end{array}
	\right.
	\]
	be such that $\sce \geq 0$, and also, if $\alpha$ is not an even integer, $(\ref{regularity assumption})$ holds. Then for any $u_0 \in H^{\sce}$, there exist $T\in (0,+\infty]$ and a unique solution to $(\ref{focusing FNLS})$ satisfying
	\[
	u \in C([0,T), H^{\sce}) \cap L^p_{\emph{loc}}([0,T), B^{\sce-\gamma_{p,q}}_q),
		\]
		where
		\[
		\left\{
		\begin{array}{c l}
		p=4, q=\infty &\text{if } d=1, \\
		2<p<\alpha, q=2p/(p-2) &\text{if } d=2, \\
		p=2, q=2d/(d-2) &\text{if } d\geq 3,
		\end{array}
		\right.
		\]
		and
		\[
		\gamma_{p,q} = \frac{d}{2}-\frac{d}{q} - \frac{2s}{p}.
		\]
		Moreover, the following properties hold:
		\begin{itemize}
			\item There is conservation of mass, i.e. $M(u(t)) = M(u_0)$ for all $t\in [0,T)$;
			\item If $\sce \geq s$, then the energy is conserved, i.e. $E(u(t)) = E(u_0)$ for all $t\in [0,T)$.
		\end{itemize}
	\end{prop}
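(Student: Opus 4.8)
The plan is to run the standard contraction-mapping scheme on the Duhamel formula, adapted to the scaling-critical regularity $\gamma=\sct$; this is essentially the argument of \cite{HongSire, Dinh-fract}, and it parallels the subcritical Proposition \ref{proposition lwp non radial} with the usual critical twist. Write $F(z)=|z|^\alpha z$, let $(p,q)$ be the admissible pair from the statement, set $\sigma:=\sct-\gamma_{p,q}$, and put
\[
\Phi(u)(t)=e^{-it(-\Delta)^s}u_0+i\int_0^t e^{-i(t-\tau)(-\Delta)^s}F(u(\tau))\,d\tau.
\]
I would seek a fixed point of $\Phi$ in a ball of $Z_T:=C([0,T],H^{\sct})\cap L^p([0,T],B^\sigma_q)$, using for the contraction the weaker metric $d(u,v):=\|u-v\|_{L^p([0,T],B^\sigma_q)}$, with respect to which the relevant ball is complete. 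Because $\gamma=\sct$ is critical there is no existence time depending only on $\|u_0\|_{H^{\sct}}$, so the needed smallness must instead be produced by shrinking $T$: the homogeneous estimate (\ref{strichartz estimate homogeneous}), upgraded to Besov spaces by a Littlewood--Paley square-function argument (this upgrade is also what lets us use $B^\sigma_\infty$ in place of $L^\infty$ in the endpoint case $d=1$), gives $\|e^{-it(-\Delta)^s}u_0\|_{L^p(\R,B^\sigma_q)}\lesssim\|u_0\|_{H^{\sct}}<\infty$, and hence $\|e^{-it(-\Delta)^s}u_0\|_{L^p([0,T],B^\sigma_q)}\le\eta$ once $T$ is small.

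Next I would apply the inhomogeneous estimate (\ref{strichartz estimate inhomogeneous}), again in Besov form, to bound $\|\Phi(u)\|_{Z_T}$ by $C\|u_0\|_{H^{\sct}}$ plus a space-time norm of $F(u)$ at regularity $\sct$ measured against a suitable dual admissible pair, and likewise estimate the difference $\Phi(u)-\Phi(v)$. For the nonlinear terms I would use the fractional chain rule of Lemma \ref{lem nonlinear estimate} together with the fractional Leibniz rule, moving all the derivatives onto one factor of $u$ and estimating the remaining $\alpha$ factors by H\"older in space and time after appropriate Sobolev and Besov embeddings; the derivative loss intrinsic to the fractional propagator $e^{-it(-\Delta)^s}$ is absorbed precisely by these embeddings, and it is this that fixes the dimension-dependent choice of $(p,q)$ and the lower bound required on $\alpha$. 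When $\alpha$ is an even integer $F$ is a polynomial in $u$ and $\bar u$ and this step is straightforward; when $\alpha$ is not an even integer, assumption (\ref{regularity assumption}) makes $F$ of class $C^{\lceil\sct\rceil}$, so the chain and Leibniz rules may be iterated up to the (possibly larger than $1$) order $\sct$. The target is an estimate of the schematic form
\[
\|F(u)\|_{\ast}\ \lesssim\ \|u\|_{Y_T}^{\alpha}\,\|u\|_{Z_T},\qquad \|F(u)-F(v)\|_{\ast}\ \lesssim\ \big(\|u\|_{Y_T}^{\alpha}+\|v\|_{Y_T}^{\alpha}\big)\,\|u-v\|_{Y_T},
\]
where $Y_T:=L^p([0,T],B^\sigma_q)$ and $\|\cdot\|_\ast$ is the relevant dual Strichartz norm; the second bound uses the elementary inequality $|F(z)-F(w)|\lesssim(|z|^\alpha+|w|^\alpha)|z-w|$, with the customary extra care when $\alpha<1$. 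The essential gain is that the prefactor $\|u\|_{Y_T}^{\alpha}$ is the small quantity.

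Combining these, for $\eta$ (hence $T$) small enough $\Phi$ is a self-map of the ball $\{u\in Z_T:\ \|u\|_{Y_T}\le 2C\eta,\ \|u\|_{L^\infty([0,T],H^{\sct})}\le 2C\|u_0\|_{H^{\sct}}\}$ and a contraction for $d$, so the Banach fixed point theorem supplies a solution; continuity in time into $H^{\sct}$ follows from the Strichartz bounds together with the strong continuity of $t\mapsto e^{-it(-\Delta)^s}u_0$, and uniqueness in the full class is recovered by covering $[0,T)$ with finitely many short intervals on which the $Y$-norm of the solution is small and iterating uniqueness in the ball. Conservation of $M(u(t))$, and of $E(u(t))$ once $\sct\ge s$, I would obtain by approximating $u_0$ in $H^{\sct}$ (respectively $H^s$) by smooth data, for which these identities are a direct computation, and passing to the limit using the continuous dependence and the continuity of $M$ and $E$ on the relevant space.

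The hard part, as already flagged, is the second paragraph: one must choose the Strichartz exponents so that the derivative loss of $e^{-it(-\Delta)^s}$ is compensated \emph{exactly} while the nonlinearity still closes at the critical scaling with a power of a small norm to spare, and balancing these two requirements against the admissibility constraints is what forces the three-case split on $d$ in the statement. Once such exponents are exhibited the remainder is the routine critical contraction argument.
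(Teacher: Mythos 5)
You have correctly identified the general framework --- a contraction on the Duhamel map in a critical Strichartz/Besov space, with smallness coming from shrinking $T$ rather than from a norm of $u_0$ --- and this is indeed the strategy of the cited works (the paper itself does not reprove this proposition; it only refers to \cite{HongSire, Dinh-fract}). However, your write-up has a genuine gap precisely at the step you yourself flag as "the hard part": you never exhibit the dual exponents, never verify admissibility, and never check that the nonlinear estimate closes at critical scaling. That computation is not routine bookkeeping here; it is the entire content of the proposition, and it is what forces the restrictions $\alpha>4$ ($d=1$), $\alpha>2$ ($d\geq 2$) and the specific three-case choice of $(p,q)$. A proof that defers exactly this step has not proved the statement.

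More importantly, the mechanism you propose for handling the loss of derivatives is the wrong one for the critical case. You assert that "the derivative loss intrinsic to the fractional propagator is absorbed precisely by these [Sobolev and Besov] embeddings," i.e.\ the same device as in the subcritical Proposition \ref{proposition lwp non radial}. But at $\gamma=\sct$ there is no room left in the scaling: Sobolev embedding consumes exactly the regularity you have, and the paper explicitly remarks that "the Sobolev embedding does not help us to overcome the loss of derivatives" in this regime. The actual proof in \cite{HongSire} closes the estimate by deriving a separate, delicate control of $\|u\|_{L^\alpha_t L^\infty_x}$ from the norm $\|u\|_{L^p_t B^{\sct-\gamma_{p,q}}_q}$ (this is why the conclusion is stated in the Besov space $B^{\sct-\gamma_{p,q}}_q$ at all, and why $q=\infty$ appears when $d=1$), and it is this auxiliary space--time $L^\infty_x$ bound, not an embedding applied factor by factor, that absorbs the derivative loss $\gamma_{p,q}>0$ in the H\"older step $\|F(u)\|_{*}\lesssim \|u\|_{L^\alpha_t L^\infty_x}^{\alpha}\|u\|_{Z_T}$. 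As a secondary point, measuring the difference $F(u)-F(v)$ in a dual norm carrying positive regularity, as your schematic display suggests, would require a fractional chain rule for differences that is not available in general; the standard fix (and the one consistent with your chosen metric) is to estimate the difference only in a derivative-free norm such as $L^{p'}_tL^{q'}_x$, and you should say so explicitly.
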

We refer the reader to \cite{HongSire} (see also \cite{Dinh-fract}) for the proof of this result. Unlike the sub-critical case, the Sobolev embedding does not help us to overcome the loss of derivatives. It needs a delicate estimate on $L^\alpha_t L^\infty_x$ to overcome this loss of derivatives. 

\subsection{Radial initial data.} 
In this subsection, we show the local well-posedness for $(\ref{focusing FNLS})$ with radial initial data in Sobolev spaces. The proof is again based on the contraction mapping argument via Strichartz estimates. Thanks to Strichartz estimates without loss of derivatives in the radial case, we have better local well-posedness comparing to the non-radial case. Let us start with the local well-posedness in the subcritical case.
\begin{prop}[Radial local theory I] \label{proposition lwp radial subcritical}
	Let $d\geq 2$ and $s \in \left[\frac{d}{2d-1},1\right)$. Let $\gamma \in \left[0,\frac{d}{2}\right)$ be such that $\gamma >\sce$, and also, if $\alpha$ is not an even integer, $(\ref{regularity assumption})$ holds. Let
	\[
	p = \frac{4s(\alpha+2)}{\alpha(d-2\gamma)}, \quad q= \frac{d(\alpha+2)}{d+\alpha\gamma}.
	\] 
	Then for any $u_0 \in H^\gamma$ be radial, there exists $T \in (0,+\infty]$ and a unique solution to $(\ref{focusing FNLS})$ satisfying
	\[
	u \in C([0,T), H^\gamma) \cap L^p_{\emph{loc}} ([0,T), W^{\gamma,q}).
	\]
	Moreover, the following properties hold: 
	\begin{itemize}
	\item If $T<+\infty$, then $\lim_{t\uparrow T} \|u(t)\|_{H^\gamma} = \infty$;
	\item There is conservation of mass, i.e. $M(u(t)) = M(u_0)$ for all $t\in [0,T)$;
	\item If $\gamma\geq s$, then the energy is conserved, i.e. $E(u(t)) = E(u_0)$ for all $t\in [0,T)$.
	\end{itemize}
\end{prop}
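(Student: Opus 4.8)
The proof follows the classical contraction-mapping scheme applied to the Duhamel formulation
\[
u(t)=e^{-it(-\Delta)^s}u_0+i\int_0^t e^{-i(t-\tau)(-\Delta)^s}\big(|u|^\alpha u\big)(\tau)\,d\tau=:\Phi(u)(t),
\]
and runs entirely parallel to the non-radial case of Proposition \ref{proposition lwp non radial}, the only structural difference being that one now has the \emph{loss-free} radial Strichartz estimates \eqref{radial strichartz estimate homogeneous}--\eqref{radial strichartz estimate inhomogeneous} available, which is exactly what permits reaching down to $\gamma>\sce$. The first, routine, step is to verify that $(p,q)$ is a radial fractional admissible pair in the sense of \eqref{fractional admissible}: a direct computation gives $\frac{2s}{p}+\frac{d}{q}=\frac{d}{2}$; from $\gamma\in[0,d/2)$ one gets $q\in(2,\alpha+2]\subset[2,\infty)$ and $p\in(2,\infty)$, the latter because $p=2$ corresponds precisely to $\gamma=\sce-s$ while $\gamma>\sce>\sce-s$; in particular $(p,q)\ne\big(2,\tfrac{4d-2}{2d-3}\big)$, and the admissibility restriction $s\ge\frac{d}{2d-1}$ is nothing but the standing hypothesis. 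The solution will be obtained as the fixed point of $\Phi$ on the closed ball
\[
X(T_0,M):=\Big\{u\in L^\infty([0,T_0],H^\gamma)\cap L^p([0,T_0],W^{\gamma,q}):\ \|u\|_{L^\infty_tH^\gamma}+\|u\|_{L^p_tW^{\gamma,q}}\le M\Big\},
\]
equipped with the weaker distance $d(u,v)=\|u-v\|_{L^\infty_tL^2}+\|u-v\|_{L^p_tL^q}$, which makes $X(T_0,M)$ a complete metric space.

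Applying $\langle\nabla\rangle^\gamma$ to $\Phi(u)$ and invoking the homogeneous and inhomogeneous radial Strichartz estimates (with the pair $(p,q)$, and with the admissible pair $(\infty,2)$ for the $L^\infty_tH^\gamma$ component) reduces the self-mapping property to the nonlinear estimate
\[
\big\||u|^\alpha u\big\|_{L^{p'}([0,T_0],W^{\gamma,q'})}\ \lesssim\ T_0^{\theta}\,\big(\|u\|_{L^\infty_tH^\gamma}+\|u\|_{L^p_tW^{\gamma,q}}\big)^{\alpha+1}
\]
for some $\theta=\theta(d,s,\alpha,\gamma)>0$. For $\gamma\in(0,1)$ this comes from the fractional chain rule of Lemma \ref{lem nonlinear estimate} applied to $F(z)=|z|^\alpha z$ (so $|F'(z)|\lesssim|z|^\alpha$), followed by Hölder in $x$ and the Sobolev embedding $W^{\gamma,q}\hookrightarrow L^r$ for the appropriate $r$ (available since $\gamma<d/2$), and finally Hölder in time, the gain of the positive power $T_0^\theta$ being exactly a reformulation of the subcriticality $\gamma>\sce$. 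For $\gamma\ge1$ one writes $\gamma=k+\sigma$ with $k=\lfloor\gamma\rfloor\ge1$ and $\sigma\in[0,1)$, expands $\partial^k(|u|^\alpha u)$ via the Leibniz rule, and applies the fractional chain rule to the leftover $\sigma$-order derivative on each resulting summand; this is legitimate precisely because $\lceil\gamma\rceil\le\alpha+1$, i.e.\ \eqref{regularity assumption}, ensures that $F$ has enough derivatives near the origin, while if $\alpha$ is an even integer $F$ is a polynomial in $(u,\bar u)$ and no restriction is needed. Each summand is then bounded by Hölder and Sobolev exactly as before, and summing yields the estimate.

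The Lipschitz bound for $\Phi$ is carried out in the weaker norm, where no derivatives appear: from $\big||u|^\alpha u-|v|^\alpha v\big|\lesssim(|u|^\alpha+|v|^\alpha)|u-v|$ when $\alpha\ge1$ (and from the elementary Hölder-continuity inequality when $0<\alpha<1$), the same Hölder-in-time/Sobolev-in-space bookkeeping gives $d(\Phi(u),\Phi(v))\lesssim T_0^{\theta}M^{\alpha}\,d(u,v)$. Taking $M\sim\|u_0\|_{H^\gamma}$ and then $T_0=T_0(\|u_0\|_{H^\gamma})>0$ small enough makes $\Phi$ a contraction of $X(T_0,M)$ into itself, producing a unique fixed point; that it lies in $C([0,T_0],H^\gamma)$ follows from the $H^\gamma$-continuity of $t\mapsto e^{-it(-\Delta)^s}u_0$ and of the Duhamel term, and uniqueness in the whole class $C([0,T_0],H^\gamma)\cap L^p([0,T_0],W^{\gamma,q})$ follows by a standard argument. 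Gluing such local solutions gives a maximal solution on $[0,T)$, and the blow-up alternative is the usual one: if $T<\infty$ with $\limsup_{t\uparrow T}\|u(t)\|_{H^\gamma}<\infty$, then, since the existence time above depends only on $\|u_0\|_{H^\gamma}$, one restarts from times near $T$ with a uniform time-step and extends past $T$, a contradiction. Conservation of mass follows by a standard regularization argument; when $\gamma\ge s$ the solution lies in $C([0,T),H^s)$, the energy $E(u(t))$ is finite (using $\gamma\ge s$ together with $H^\gamma\hookrightarrow L^{\alpha+2}$, which holds because $\gamma>\sce$), and differentiating $E(u(t))$ along the equation shows it is constant.

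The main obstacle is the nonlinear estimate in the regime $\gamma\ge1$: one must distribute up to $\lceil\gamma\rceil-1$ integer derivatives plus one fractional derivative over the non-smooth nonlinearity $|u|^\alpha u$ while keeping all the Hölder and Sobolev exponents in the admissible ranges, and it is precisely here that the regularity hypothesis \eqref{regularity assumption} enters. The only other point requiring attention --- checking that the particular pair $(p,q)$ in the statement, together with $\gamma>\sce$, leaves a strictly positive power of $T_0$ in the nonlinear estimate --- is a routine if slightly tedious exponent computation.
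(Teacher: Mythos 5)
Your proposal is correct and follows essentially the same route as the paper: a contraction on a ball in $L^\infty_t \dot H^\gamma \cap L^p_t \dot W^{\gamma,q}$ with the weaker metric $\|u-v\|_{L^\infty_t L^2}+\|u-v\|_{L^p_t L^q}$, using the loss-free radial Strichartz estimates, the fractional chain rule, H\"older and Sobolev embedding to extract the factor $|I|^\theta$ with $\theta=1-\frac{\alpha(d-2\gamma)}{4s}>0$. If anything, you are slightly more explicit than the paper about the case $\gamma\ge 1$ (Leibniz plus fractional chain rule, where \eqref{regularity assumption} enters), which the paper leaves implicit.
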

\begin{proof}
	It is easy to check that $(p,q)$ satisfies the fractional admissible condition $(\ref{fractional admissible})$. We choose $(m,n)$ so that
	\begin{align}
	\frac{1}{p'} = \frac{1}{p} + \frac{\alpha}{m}, \quad \frac{1}{q'}= \frac{1}{q} + \frac{\alpha}{n}. \label{choice mn}
	\end{align}
	We see that
	\begin{align}
	\frac{\alpha}{m}-\frac{\alpha}{p} = 1-\frac{\alpha (d-2\gamma)}{4s} =:\theta>0, \quad q \leq n = \frac{dq}{d-\gamma q}. \label{define theta}
	\end{align}
	The later fact gives the Sobolev embedding $\dot{W}^{\gamma,q} \hookrightarrow L^n$. Let us now consider
	\[
	X:= \left\{ C(I, H^\gamma) \cap L^p(I, W^{\gamma,q}) \ : \ \|u\|_{L^\infty(I, \dot{H}^\gamma)} + \|u\|_{L^p(I, \dot{W}^{\gamma,q})} \leq M \right\},
	\]
	equipped with the distance
	\[
	d(u,v):= \|u-v\|_{L^\infty(I, L^2)} +\|u-v\|_{L^p(I, L^q)},
	\]
	where $I=[0,\zeta]$ and $M, \zeta>0$ to be chosen later. By Duhamel's formula, it suffices to prove that the functional 
	\[
	\Phi(u)(t) := e^{-it(-\Delta)^s} u_0 -i \mu \int_0^t e^{-i(t-\tau)(-\Delta)^s} |u(\tau)|^\alpha u(\tau) d\tau
	\]
	is a contraction on $(X,d)$. By radial Strichartz estimates $(\ref{radial strichartz estimate homogeneous})$ and $(\ref{radial strichartz estimate inhomogeneous})$,
	\begin{align*}
	\|\Phi(u)\|_{L^\infty(I, \dot{H}^\gamma)} + \|\Phi(u)\|_{L^p(I, \dot{W}^{\gamma, q})} &\lesssim \|u_0\|_{\dot{H}^\gamma} + \||u|^\alpha u\|_{L^{p'}(I, \dot{W}^{\gamma,q'})}, \\
	\|\Phi(u) - \Phi(v)\|_{L^\infty(I, L^2)} + \|\Phi(u) - \Phi(v)\|_{L^p(I, L^q)} &\lesssim \||u|^\alpha u - |v|^\alpha v\|_{L^{p'} (I, L^{q'})}.
	\end{align*}
	The fractional chain rule given in Lemma $\ref{lem nonlinear estimate}$ and the H\"older inequality give
	\begin{align*}
	\||u|^\alpha u\|_{L^{p'}(I, \dot{W}^{\gamma,q'})} &\lesssim \|u\|^\alpha_{L^m(I, L^n)} \|u\|_{L^p(I, \dot{W}^{\gamma,q})}, \\
	&\lesssim |I|^\theta \|u\|^\alpha_{L^p(I, L^n)} \|u\|_{L^p(I, \dot{W}^{\gamma,q})} \\
	&\lesssim |I|^\theta \|u\|^{\alpha+1}_{L^p(I, \dot{W}^{\gamma,q})}.
	\end{align*}
	Similarly,
	\begin{align*}
	\||u|^\alpha- |v|^\alpha v\|_{L^{p'}(I, L^{q'})} &\lesssim \left(\|u\|^\alpha_{L^m(I,L^n)} + \|v\|^\alpha_{L^m(I,L^n)} \right) \|u-v\|_{L^p(I,L^q)} \\
	&\lesssim |I|^\theta \left(\|u\|^\alpha_{L^p(I,\dot{W}^{\gamma,q})} + \|v\|^\alpha_{L^p(I,\dot{W}^{\gamma,q})} \right) \|u-v\|_{L^p(I,L^q)}.
	\end{align*}
	This shows that for all $u, v\in X$, there exists $C>0$ independent of $T$ and $u_0 \in H^\gamma$ such that
	\begin{align*}
	\|\Phi(u)\|_{L^\infty(I, \dot{H}^\gamma)} + \|\Phi(u)\|_{L^p(I, \dot{W}^{\gamma,q})} &\leq C\|u_0\|_{\dot{H}^\gamma} + C \zeta^\theta M^{\alpha+1}, \\
	d(\Phi(u),\Phi(v)) &\leq C\zeta^\theta M^\alpha d(u,v).
	\end{align*}
	If we set $M=2C\|u_0\|_{\dot{H}^\gamma}$ and choose $\zeta>0$ so that
	\[
	C\zeta^\theta M^\alpha \leq \frac{1}{2},
	\]
	then $\Phi$ is a strict contraction on $(X,d)$. This proves the existence of solution $u \in C(I, H^\gamma) \cap L^p(I, W^{\gamma,q})$. The blow-up alternative follows easily since the existence time depends only on the $\dot{H}^\gamma$-norm of initial data. The conservation of mass and energy follow by a standard approximation procedure. The proof is complete.
\end{proof}
Finally, we have the local well-posedness with radial initial data in the critical case.
\begin{prop}[Radial local theory II] \label{proposition lwp radial critical}
Let $d\geq 2$ and $s \in \left[\frac{d}{2d-1},1\right)$. Let $\alpha>0$ be such that $\sce \geq 0$, and also, if $\alpha$ is not an even integer, $(\ref{regularity assumption})$ holds. Let
\[
p= \alpha+2, \quad q = \frac{2d(\alpha+2)}{d(\alpha+2) - 4s}. 
\] 
Then for any $u_0 \in H^{\sce}$ radial, there exist $T \in (0,+\infty]$ and a unique solution to $(\ref{focusing FNLS})$ satisfying
\[
u \in C([0,T), H^{\sce} \cap L^p_{\emph{loc}} ([0,T), W^{\sce,q}).
\]
Moreover, the following properties hold:
\begin{itemize}
	\item There is conservation of mass, i.e. $M(u(t)) = M(u_0)$ for all $t\in [0,T)$;
	\item If $\sce \geq s$, then the energy is conserved, i.e. $E(u(t)) = E(u_0)$ for all $t\in [0,T)$.
\end{itemize}
\end{prop}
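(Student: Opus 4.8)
The plan is to run the same Banach fixed point scheme as in the proof of Proposition~\ref{proposition lwp radial subcritical}, using the radial Strichartz estimates without loss of derivatives; the only structural change is that at the critical regularity $\gamma=\sce$ one no longer gains a positive power of the length of the time interval, and this gain has to be replaced by the smallness of the free evolution in a scaling-invariant Strichartz norm. First I would record the elementary facts about the pair $(p,q)=\big(\alpha+2,\ \tfrac{2d(\alpha+2)}{d(\alpha+2)-4s}\big)$: a direct computation gives $\tfrac{2s}{p}+\tfrac{d}{q}=\tfrac{d}{2}$, i.e. $\gamma_{p,q}=0$ and $(p,q)$ is fractional admissible in the sense of $(\ref{fractional admissible})$; moreover $p>2$ excludes the forbidden endpoint, $\sce\geq0$ forces $q\in[2,\infty)$, and $\alpha>0$ gives $\sce q<d$, which is exactly what is needed for the Sobolev embedding $\dot W^{\sce,q}\hookrightarrow L^{\tilde q}$ with $\tfrac1{\tilde q}=\tfrac1q-\tfrac{\sce}{d}$. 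Since $\gamma_{p,q}=0$ and $s\in[\tfrac{d}{2d-1},1)$, the radial Strichartz estimates $(\ref{radial strichartz estimate homogeneous})$ and $(\ref{radial strichartz estimate inhomogeneous})$ hold for the pair $(p,q)$ and for the energy pair $(\infty,2)$; commuting $|\nabla|^{\sce}$ through $e^{-it(-\Delta)^s}$ lifts them to the level of $\dot H^{\sce}$ and $\dot W^{\sce,q}$.

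Next I would fix the function space. Because $\||\nabla|^{\sce}e^{-it(-\Delta)^s}u_0\|_{L^p(\R,L^q)}\lesssim\|u_0\|_{\dot H^{\sce}}<\infty$, dominated convergence provides $T>0$ such that $\eta:=\|e^{-it(-\Delta)^s}u_0\|_{L^p([0,T],\dot W^{\sce,q})}$ is as small as we wish. On $I=[0,T]$ I set
\[
X:=\Big\{u\ :\ \|u\|_{L^\infty(I,H^{\sce})}+\|u\|_{L^p(I,W^{\sce,q})}\leq M,\ \ \|u\|_{L^p(I,\dot W^{\sce,q})}\leq 2C\eta\Big\},
\]
with $M:=2C\|u_0\|_{H^{\sce}}$ and $C$ the Strichartz constant, equipped with the weak metric $d(u,v):=\|u-v\|_{L^\infty(I,L^2)}+\|u-v\|_{L^p(I,L^q)}$; by weak lower semicontinuity of the strong norms, $(X,d)$ is complete. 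The goal is then to show that the Duhamel map $\Phi(u)(t):=e^{-it(-\Delta)^s}u_0+i\int_0^te^{-i(t-\tau)(-\Delta)^s}|u(\tau)|^\alpha u(\tau)\,d\tau$ maps $X$ into itself and is a contraction for $\eta$ small.

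The core of the argument is the pair of nonlinear estimates
\[
\||u|^\alpha u\|_{L^{p'}(I,\dot W^{\sce,q'})}\lesssim\|u\|_{L^p(I,\dot W^{\sce,q})}^{\alpha+1},
\]
\[
\||u|^\alpha u-|v|^\alpha v\|_{L^{p'}(I,L^{q'})}\lesssim\big(\|u\|_{L^p(I,\dot W^{\sce,q})}^{\alpha}+\|v\|_{L^p(I,\dot W^{\sce,q})}^{\alpha}\big)\,\|u-v\|_{L^p(I,L^q)}.
\]
The bookkeeping is dictated by H\"older in space, $\tfrac1{q'}=\tfrac{\alpha}{\tilde q}+\tfrac1q$, and in time, where the exponents are arranged so that the $\alpha$ copies of $u$ land in $L^p(I,L^{\tilde q})$ with \emph{no} power of $|I|$ appearing (this is the critical case), combined with $\dot W^{\sce,q}\hookrightarrow L^{\tilde q}$ and, for the first inequality, a chain rule for $|\nabla|^{\sce}(|u|^\alpha u)$. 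When $\sce\in(0,1)$ the chain rule is exactly Lemma~\ref{lem nonlinear estimate}. When $\sce\geq1$---which occurs in high dimension---I would write $\sce=k+\sigma$ with $k=\lfloor\sce\rfloor$ and $\sigma\in[0,1)$, distribute the $k$ integer derivatives by the ordinary Leibniz rule and the remaining $|\nabla|^{\sigma}$ by Lemma~\ref{lem nonlinear estimate}; here the regularity assumption $(\ref{regularity assumption})$, $\lceil\sce\rceil\leq\alpha+1$ (needed only when $\alpha$ is not an even integer, since otherwise $|u|^\alpha u$ is a polynomial in $u$ and $\overline u$), guarantees that $z\mapsto|z|^\alpha z$ is of class $C^{\lceil\sce\rceil}$, so that all derivatives of the nonlinearity that appear are bounded functions of $u$, and the H\"older exponents produced by Sobolev embedding all stay in $[2,\infty)$. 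This is the step I expect to be the main technical obstacle; the subcritical Proposition~\ref{proposition lwp radial subcritical} avoids it because there $\sce<\gamma$ plays a more passive role.

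Granting these estimates, the radial Strichartz inequalities give $\|\Phi(u)\|_{L^\infty(I,\dot H^{\sce})}+\|\Phi(u)\|_{L^p(I,\dot W^{\sce,q})}\leq C\|u_0\|_{\dot H^{\sce}}+C(2C\eta)^{\alpha+1}$, together with the analogous bound for the $L^2$ components using the pair $(p,q)$ with $\gamma_{p,q}=0$, and $\|\Phi(u)\|_{L^p(I,\dot W^{\sce,q})}\leq\eta+C(2C\eta)^{\alpha+1}$, while $d(\Phi(u),\Phi(v))\leq C(2C\eta)^{\alpha}\,d(u,v)$. Choosing $\eta$ small enough that $C(2C\eta)^{\alpha+1}\leq C\eta$ and $C(2C\eta)^{\alpha}\leq\tfrac12$ makes $\Phi$ a strict contraction on $(X,d)$; its fixed point is the desired solution $u\in C(I,H^{\sce})\cap L^p(I,W^{\sce,q})$, and continuity into $H^{\sce}$ follows from the Strichartz bounds together with the density of smooth data. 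Conservation of mass, and of energy when $\sce\geq s$, is then obtained by the usual regularization argument: approximate $u_0$ in $H^{\sce}$ by $H^\infty$ data, use the exact conservation laws on the resulting smooth solutions, and pass to the limit. Finally one iterates the construction to reach a maximal time $T$ and the space $L^p_{\mathrm{loc}}([0,T),W^{\sce,q})$; note that, in contrast with the subcritical Proposition~\ref{proposition lwp radial subcritical}, the existence time depends on the profile of $u_0$ through $\eta$ and not merely on $\|u_0\|_{H^{\sce}}$, which is precisely why no blow-up alternative is asserted here.
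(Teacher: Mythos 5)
Your proposal is correct and follows essentially the same route as the paper: a contraction mapping in a scaling-critical Strichartz space built on the radial, loss-free estimates $(\ref{radial strichartz estimate homogeneous})$--$(\ref{radial strichartz estimate inhomogeneous})$, with the smallness of the free evolution $\|e^{-it(-\Delta)^s}u_0\|_{L^p(I,\dot W^{\sce,q})}$ replacing the subcritical gain $|I|^\theta$, the Sobolev embedding $\dot W^{\sce,q}\hookrightarrow L^n$ with $\tfrac1{q'}=\tfrac1q+\tfrac{\alpha}{n}$, and the fractional chain rule closing the nonlinear estimates. Your added remarks (completeness of $(X,d)$ under the weak metric, and the Leibniz-plus-chain-rule treatment when $\sce\geq 1$, which is where $(\ref{regularity assumption})$ enters) only make explicit points the paper leaves implicit.
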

\begin{proof}
	It is easy to check that $(p,q)$ satisfies the fractional admissible condition. We next choose $n$ so that
	\[
	\frac{1}{q'} = \frac{1}{q} + \frac{\alpha}{n} \text{ or } n = \frac{dq}{d-\sct q}.
	\]
	The last condition ensures the Sobolev embedding 
	\begin{align}
	\|u\|_{L^p(I, L^n)} \lesssim \|u\|_{L^p(I, \dot{W}^{\sct,q})}. \label{sobolev embedding}
	\end{align}
	Let us consider
	\[
	X:= \left\{ u \in L^p(I, W^{\sct,q}) \ : \ \|u\|_{L^p(I, \dot{W}^{\sct,q})} \leq M \right\},
	\]
	equipped with the distance
	\[
	d(u,v) = \|u-v\|_{L^p(I, L^q)},
	\]
	where $I=[0,\zeta]$ and $M, \zeta>0$ to be chosen later. We will show that the functional $\Phi$ is a contraction on $(X,d)$, where
	\[
	\Phi(u)(t) = e^{-it(-\Delta)^s} u_0 - i \mu \int_0^t e^{-i(t-\tau)(-\Delta)^s} |u(\tau)|^\alpha u(\tau) d\tau =: u_{\text{hom}}(t) + u_{\text{inh}}(t). 
	\]
	By radial Strichartz estimate $(\ref{radial strichartz estimate homogeneous})$, we have
	\[
	\|u_{\text{hom}}\|_{L^p(I, \dot{W}^{\sct, q})} \lesssim \|u_0\|_{\dot{H}^{\sct}}.
	\]
	This shows that $\|u_{\text{hom}}\|_{L^p(I, \dot{W}^{\sct,q})} \leq \epsilon$ for some $\epsilon$ small enough provided that $\zeta$ is small or $\|u_0\|_{\dot{H}^{\sct}}$ is small. Similarly, by $(\ref{radial strichartz estimate inhomogeneous})$, we have
	\[
	\|u_{\text{inh}}\|_{L^p(I, \dot{W}^{\sct,q})} \lesssim \||u|^\alpha u\|_{L^{p'}(I, \dot{W}^{\sct,q'})}.
	\]
	By the fractional chain rule, the H\"older inequality and $(\ref{sobolev embedding})$, we get
	\[
	\||u|^\alpha u\|_{L^{p'}(I, \dot{W}^{\sct,q'})} \lesssim \|u\|^\alpha_{L^p(I, L^n)} \|u\|_{L^p(I, \dot{W}^{\sct,q})} \lesssim \|u\|^{\alpha+1}_{L^p(I, \dot{W}^{\sct,q})}.
	\]
	Similarly, we have
	\[
	\||u|^\alpha u - |v|^\alpha v\|_{L^{p'}(I, L^{q'})} \lesssim \left( \|u\|^\alpha_{L^p(I, \dot{W}^{\sct,q})} + \|v\|^\alpha_{L^p(I,\dot{W}^{\sct,q})} \right) \|u-v\|_{L^p(I,L^q)}. 
	\]
	Thus, for all $u,v \in X$, there exists $C$ independent of $u_0 \in H^{\sct}$ such that
	\begin{align*}
	\|\Phi(u)\|_{L^p(I, \dot{W}^{\sct,q})} &\leq \epsilon + C M^{\alpha+1}, \\
	d(\Phi(u), \Phi(v)) &\leq C M^\alpha d(u,v).
	\end{align*}
	If we choose $\epsilon, M>0$ small so that
	\[
	CM^\alpha \leq \frac{1}{2}, \quad \epsilon + \frac{M}{2} \leq M,
	\]
	then $\Phi$ is a contraction on $(X,d)$. This shows the existence of solutions. The conservation of mass and energy are standard and we omit the details. The proof is complete.
\end{proof}
\section{Virial estimates} \label{section virial estimates}
\setcounter{equation}{0}

In this section, we recall and prove some virial estimates related to $(\ref{focusing FNLS})$ which are in the same spirit as in \cite[Section 2]{BouHimLen}. Let us start with the following estimates.
\begin{lem} [\cite{BouHimLen}] \label{lemma various estimate 1}
	Let $d\geq 1$ and $\varphi:\R^d \rightarrow \R$ be such that $\nabla \varphi \in W^{1,\infty}$. Then for any $u \in H^{1/2}$, it holds that
	\begin{align}
	\left| \int \overline{u} (x) \nabla \varphi(x) \cdot \nabla u(x) dx \right| \leq C \|\nabla \varphi\|_{W^{1,\infty}} \left( \||\nabla|^{1/2} u\|^2_{L^2} + \|u\|_{L^2} \||\nabla|^{1/2} u\|_{L^2} \right), \label{various estimate 1}
	\end{align}
	for some constant $C>0$ depending only on $d$.
	\end{lem}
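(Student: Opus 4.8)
The plan is to reduce the bound on the quadratic form $\int \overline{u}\,\nabla\varphi\cdot\nabla u$ to a pointwise (in frequency) commutator estimate, handled via the Fourier transform. Write $I = \int \overline{u}(x)\,\nabla\varphi(x)\cdot\nabla u(x)\,dx$. Since $\nabla\varphi$ is bounded with bounded derivative, the natural idea is to split the two copies of $u$ at frequency level $1$: put $u = u_{\leq 1} + u_{> 1}$ where $u_{\leq 1} = P_{\leq 1} u$ and $u_{>1} = (1-P_{\leq 1})u$. On the high–high piece $\int \overline{u_{>1}}\,\nabla\varphi\cdot\nabla u_{>1}$ one wants to gain that each factor $\nabla u_{>1}$ costs only a half derivative more than $|\nabla|^{1/2} u$; on the pieces involving $u_{\leq 1}$ one uses that $\nabla$ of a low–frequency function is controlled by $\|u\|_{L^2}$. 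The clean way to make the first point is to symmetrize: integrating by parts, $2\,\mathrm{Re}\, I = -\int \Delta\varphi\,|u|^2 + (\text{lower order})$ type manipulations, but more efficiently one writes $I = \langle |\nabla|^{1/2} u,\, |\nabla|^{-1/2}(\nabla\varphi\cdot\nabla u)\rangle$ and seeks the bound $\||\nabla|^{-1/2}(\nabla\varphi\cdot\nabla u)\|_{L^2} \lesssim \|\nabla\varphi\|_{W^{1,\infty}}(\||\nabla|^{1/2}u\|_{L^2} + \|u\|_{L^2})$.

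So the first concrete step I would carry out is to establish the operator bound
\[
\||\nabla|^{-1/2}\,\partial_j(a\,\partial_k u)\|_{L^2} \lesssim \|a\|_{W^{1,\infty}}\big(\||\nabla|^{1/2}u\|_{L^2} + \|u\|_{L^2}\big),
\]
for $a$ real-valued (here $a = \partial_j\varphi$, after summing over $j,k$). Writing $\partial_j(a\,\partial_k u) = a\,\partial_j\partial_k u + (\partial_j a)\,\partial_k u$ handles the second term by $\|(\partial_j a)\partial_k u\|_{\dot H^{-1/2}} \lesssim \|\nabla a\|_{L^\infty}\|\partial_k u\|_{\dot H^{-1/2}} \lesssim \|\nabla a\|_{L^\infty}\||\nabla|^{1/2}u\|_{L^2}$, using that multiplication by an $L^\infty$ function is bounded on $\dot H^{-1/2}$ only up to the usual paraproduct subtlety — this is exactly where Littlewood–Paley comes in and where I would spend care. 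For the first term one commutes: $|\nabla|^{-1/2} a\,\partial_j\partial_k u = a\,|\nabla|^{-1/2}\partial_j\partial_k u + [\,|\nabla|^{-1/2}, a\,]\partial_j\partial_k u$, and the commutator gains a derivative, $\|[\,|\nabla|^{-1/2},a\,]f\|_{L^2} \lesssim \|\nabla a\|_{L^\infty}\,\||\nabla|^{-3/2} f\|_{L^2}$, so that piece is again $\lesssim \|\nabla a\|_{L^\infty}\||\nabla|^{1/2}u\|_{L^2}$; the remaining genuinely problematic term is $a\,|\nabla|^{-1/2}\partial_j\partial_k u$, whose symbol $|\xi|^{3/2}$ is not controlled by $|\xi|^{1/2}$ at high frequency — so one does \emph{not} estimate it this way for the high part. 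Instead, for the high-frequency part of $u$ one returns to the original bilinear form $\int \overline{u_{>1}}\,\nabla\varphi\cdot\nabla u_{>1}$ and symmetrizes in the two $u$'s: moving half a derivative from one factor to the other via $|\xi|\cdot|\eta| \lesssim |\xi|^{1/2}|\eta|^{1/2}(|\xi|+|\eta|)$ on the overlap where a derivative of $\varphi$ forces $|\xi - \eta| \lesssim $ (no constraint), and using $|\nabla\varphi|\leq \|\nabla\varphi\|_{L^\infty}$ plus, for the diagonal, an integration by parts producing $\Delta\varphi$ bounded by $\|\nabla\varphi\|_{W^{1,\infty}}$. The low-frequency contributions ($u_{\leq 1}$ in either slot) are crudely bounded: $\|\nabla u_{\leq 1}\|_{L^2} \lesssim \|u\|_{L^2}$ and $\|\nabla u_{\leq 1}\|_{L^2}$ paired against $\|\nabla\varphi\|_{L^\infty}\|\nabla u\|_{\dot H^{-1}}\lesssim \|\nabla\varphi\|_{L^\infty}\|u\|_{L^2}$ or against $\||\nabla|^{1/2}u\|_{L^2}$, yielding the cross term $\|u\|_{L^2}\||\nabla|^{1/2}u\|_{L^2}$.

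The main obstacle is precisely the endpoint nature of the $\dot H^{1/2}$ bilinear estimate: multiplication by $\nabla\varphi$ does not map $\dot H^{1/2}$ to $\dot H^{1/2}$ boundedly in general (Sobolev multiplier failure), so one genuinely needs the Littlewood–Paley decomposition of $\nabla\varphi$ into a high-frequency remainder (absorbed by $\|\nabla^2\varphi\|_{L^\infty}$ with an extra derivative to spare) and a low-frequency, essentially-constant part (handled by the symmetric bilinear manipulation above), and then summing over dyadic shells without losing a logarithm — which works because the worst interaction is high–high$\to$low and there the geometric series in the frequency gap converges. I would organize the argument as: (i) the paraproduct decomposition $\nabla\varphi\cdot\nabla u = \sum_{N\leq M}(\dots) + \sum_{N > M}(\dots) + (\text{diagonal})$; (ii) the high-frequency-of-$\varphi$ terms via $\|\nabla^2\varphi\|_{L^\infty}$ and Bernstein; (iii) the low-frequency-of-$\varphi$ high–high-of-$u$ term via the symmetrization / integration by parts producing $\|\nabla\varphi\|_{L^\infty}$ and $\|\Delta\varphi\|_{L^\infty}$; (iv) collecting constants, all of which are bounded by $\|\nabla\varphi\|_{W^{1,\infty}}$, and absorbing the non-homogeneous piece into $\|u\|_{L^2}\||\nabla|^{1/2}u\|_{L^2}$ by Young's inequality. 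Since the statement is quoted from \cite{BouHimLen}, I would in practice cite their Lemma and only sketch this decomposition, but the above is how I would reconstruct it from scratch.
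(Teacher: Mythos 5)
First, a point of reference: the paper does not actually prove this lemma --- it is quoted from \cite{BouHimLen} and the text explicitly defers to \cite[Appendix A]{BouHimLen} for the proof. So your proposal is being measured against that cited argument, which is considerably shorter than what you outline: split $u$ at unit frequency; on the low part use $|\xi|\le|\xi|^{1/2}$ for $|\xi|\le1$ to get $\|\nabla\varphi\|_{L^\infty}\|u\|_{L^2}\||\nabla|^{1/2}u\|_{L^2}$; on the high part move half a derivative onto the factor $\nabla\varphi\,\overline{u}$ via a fractional Leibniz/commutator estimate, with $\|\nabla^2\varphi\|_{L^\infty}$ absorbing the term where the half derivative lands on $\nabla\varphi$. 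Your skeleton is in this spirit, so the strategy is not wrong.

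However, as written the proposal has a concrete defect and several load-bearing gaps. The quantity to be estimated is $\sum_j\int\overline{u}\,(\partial_j\varphi)(\partial_j u)\,dx$, which contains exactly one derivative of $u$; your ``first concrete step'' instead bounds $\||\nabla|^{-1/2}\partial_j(a\,\partial_k u)\|_{L^2}$, which carries two derivatives. The ensuing discussion of the ``genuinely problematic term'' $a\,|\nabla|^{-1/2}\partial_j\partial_k u$ with symbol $|\xi|^{3/2}$ is therefore chasing an obstruction that is not present in the actual bilinear form: after $|\nabla|^{-1/2}$ the correct quantity has symbol of order $|\xi|^{1/2}$, which is precisely what is wanted. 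Beyond this, the two steps that carry all the weight are flagged but never established: (i) boundedness of multiplication by $\nabla\varphi$ in the relevant fractional space (you correctly note that $L^\infty$ multipliers are \emph{not} bounded on $\dot H^{\pm1/2}$, but the needed endpoint estimate $\||\nabla|^{1/2}(\nabla\varphi\,v)\|_{L^2}\lesssim\|\nabla\varphi\|_{L^\infty}\||\nabla|^{1/2}v\|_{L^2}+\|\nabla\varphi\|_{W^{1,\infty}}\|v\|_{L^2}$ is asserted rather than proved); and (ii) the low-frequency integrability of the homogeneous $|\nabla|^{-1/2}$ in the pairing $\langle|\nabla|^{1/2}u,|\nabla|^{-1/2}(\cdots)\rangle$, which genuinely fails in $d=1$ unless the unit-scale splitting is carried through. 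Finally, note a constraint your intermediate manipulations must respect: the right-hand side of \eqref{various estimate 1} contains no $\|u\|_{L^2}^2$ term, so any step that produces $\int\Delta\varphi\,|u|^2\,dx$ (as your symmetrization remark does) cannot simply be bounded by $\|\Delta\varphi\|_{L^\infty}\|u\|_{L^2}^2$ --- every piece must retain at least one factor of $\||\nabla|^{1/2}u\|_{L^2}$. In short: right family of ideas, but the written reduction is incorrect as displayed and the argument is not yet a proof; citing \cite[Lemma A.1]{BouHimLen}, as the paper does, remains the appropriate course.
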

\begin{lem} \label{lemma various estimate 2}
	Let $d\geq 1$, $s\in (1/2,1)$ and $\varphi: \R^d \rightarrow \R$ be such that $\nabla \varphi \in W^{1,\infty}$. Then for any $u \in L^2$, it holds that
	\begin{align}
	\left| \int_0^\infty m^s \int \Delta \varphi |u_m|^2 dx dm\right| \leq C \|\Delta \varphi\|^{2s-1}_{L^\infty} \|\nabla \varphi\|^{2-2s}_{L^\infty} \|u\|^2_{L^2}, \label{various estimate 2}
	\end{align}
	for some constant $C>0$ depending only on $s$ and $d$. Here 
	\begin{align}
	u_m(x) = c_s \frac{1}{-\Delta+m} u(x) = c_s \mathcal{F}^{-1} \left(\frac{\hat{u}(\xi)}{|\xi|^2+m} \right), \quad m>0, \label{define auxiliary function}
	\end{align}
	where 
	\[
	c_s:= \sqrt{\frac{\sin \pi s}{\pi}}. 
	\]
\end{lem}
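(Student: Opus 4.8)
The key identity to exploit is the Balakrishman-type (subordination) formula
\[
(-\Delta)^s = \frac{\sin \pi s}{\pi} \int_0^\infty m^{s-1} \frac{-\Delta}{-\Delta + m}\, dm, \qquad s \in (0,1),
\]
applied to the quadratic form $\langle u, (-\Delta)^s u\rangle$. Since $c_s^2 = \sin\pi s/\pi$ and $u_m = c_s(-\Delta+m)^{-1}u$, one rewrites, via Plancherel,
\[
\int \Delta\varphi\, |u_m|^2\,dx = c_s^2 \int \Delta\varphi\, \Big|\mathcal{F}^{-1}\big(\tfrac{\hat u(\xi)}{|\xi|^2+m}\big)\Big|^2\,dx.
\]
The plan is to bound the $m$-integral by splitting the range $(0,\infty)$ at a parameter $R>0$ (to be optimized at the end), estimating the low-$m$ piece and the high-$m$ piece with different tools, and then choosing $R$ so that the two contributions balance — this is exactly the mechanism that produces the interpolated constant $\|\Delta\varphi\|_{L^\infty}^{2s-1}\|\nabla\varphi\|_{L^\infty}^{2-2s}$.

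First I would handle the large-$m$ part $\int_R^\infty$. Here I use the crude bound $|\int \Delta\varphi\,|u_m|^2\,dx| \le \|\Delta\varphi\|_{L^\infty}\|u_m\|_{L^2}^2$ together with the operator bound $\|(-\Delta+m)^{-1}\|_{L^2\to L^2} \le m^{-1}$, giving $\|u_m\|_{L^2}^2 \le c_s^2\, m^{-2}\|u\|_{L^2}^2$. Then
\[
\int_R^\infty m^s\,\Big|\int \Delta\varphi\,|u_m|^2\,dx\Big|\,dm \lesssim \|\Delta\varphi\|_{L^\infty}\|u\|_{L^2}^2 \int_R^\infty m^{s-2}\,dm = \frac{R^{s-1}}{1-s}\,\|\Delta\varphi\|_{L^\infty}\|u\|_{L^2}^2,
\]
which converges precisely because $s<1$. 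For the small-$m$ part $\int_0^R$, the bound $m^{-2}$ is not integrable against $m^s$ near $0$, so I instead integrate by parts in $x$ to move one derivative off $\varphi$: writing $\int \Delta\varphi\,|u_m|^2 = -\int \nabla\varphi\cdot\nabla(|u_m|^2) = -2\,\mathrm{Re}\int \nabla\varphi\cdot\overline{u_m}\,\nabla u_m$, one gets $|\int\Delta\varphi\,|u_m|^2\,dx| \le 2\|\nabla\varphi\|_{L^\infty}\|u_m\|_{L^2}\|\nabla u_m\|_{L^2}$. Now $\|\nabla u_m\|_{L^2}^2 = c_s^2\int \frac{|\xi|^2}{(|\xi|^2+m)^2}|\hat u|^2\,d\xi \le c_s^2 \cdot \frac{1}{4m}\|u\|_{L^2}^2$ (since $|\xi|^2/(|\xi|^2+m)^2 \le 1/(4m)$), while $\|u_m\|_{L^2} \le c_s m^{-1/2}\|u\|_{L^2}$ using $1/(|\xi|^2+m)\le m^{-1/2}\cdot|\xi|^{-1}$... more carefully, $\|u_m\|_{L^2}\|\nabla u_m\|_{L^2} \lesssim m^{-1}\|u\|_{L^2}^2$ by the Cauchy–Schwarz/Plancherel computation $\int\frac{|\hat u|^2}{|\xi|^2+m}\cdot\int\frac{|\xi|^2|\hat u|^2}{(|\xi|^2+m)^2}$, each factor of which one estimates so the product is $O(m^{-1})$. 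Hence
\[
\int_0^R m^s\,\Big|\int\Delta\varphi\,|u_m|^2\,dx\Big|\,dm \lesssim \|\nabla\varphi\|_{L^\infty}\|u\|_{L^2}^2 \int_0^R m^{s-1}\,dm = \frac{R^s}{s}\,\|\nabla\varphi\|_{L^\infty}\|u\|_{L^2}^2,
\]
which converges because $s>0$.

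Combining the two, the total is $\lesssim \|u\|_{L^2}^2\big(R^{s-1}\|\Delta\varphi\|_{L^\infty} + R^s\|\nabla\varphi\|_{L^\infty}\big)$, and optimizing in $R$ — i.e. choosing $R \sim \|\Delta\varphi\|_{L^\infty}/\|\nabla\varphi\|_{L^\infty}$ so the two terms are comparable — yields the claimed bound $C\|\Delta\varphi\|_{L^\infty}^{2s-1}\|\nabla\varphi\|_{L^\infty}^{2-2s}\|u\|_{L^2}^2$ with $C = C(s,d)$. The main obstacle I anticipate is a technical rather than conceptual one: justifying the $m$-integral manipulations (Fubini, differentiating under the integral, the integration by parts in $x$ for $u_m$) for general $u\in L^2$ with $\varphi$ only having $\nabla\varphi\in W^{1,\infty}$ — one would first prove everything for Schwartz $u$ and then pass to the limit using the $L^2$-continuity of $u\mapsto u_m$, and one should check the $m$-integrand is genuinely integrable near both endpoints (which the two regimes above confirm). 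A secondary point to be careful about is that $\nabla\varphi\in W^{1,\infty}$ gives $\Delta\varphi\in L^\infty$ but is only defined a.e.; this is harmless since all estimates are in terms of $L^\infty$ norms.
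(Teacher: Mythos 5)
Your overall strategy --- split the $m$-integral at a parameter, integrate by parts in $x$ for the small-$m$ piece, use the crude resolvent bound for the large-$m$ piece, and optimize the splitting point --- is exactly the paper's argument. However, the small-$m$ estimate contains a genuine error. From the definition $u_m=c_s(-\Delta+m)^{-1}u$ one has $\|u_m\|_{L^2}^2=c_s^2\int\frac{|\hat u(\xi)|^2}{(|\xi|^2+m)^2}\,d\xi\leq c_s^2m^{-2}\|u\|_{L^2}^2$: there are two powers of $|\xi|^2+m$ in the denominator, not one as in the ``Cauchy--Schwarz/Plancherel computation'' you wrote down. Your alternative route via $1/(|\xi|^2+m)\leq m^{-1/2}|\xi|^{-1}$ only bounds $\|u_m\|_{L^2}$ by $m^{-1/2}\||\nabla|^{-1}u\|_{L^2}$, which is not controlled by $\|u\|_{L^2}$ for general $u\in L^2$. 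The correct bounds are $\|u_m\|_{L^2}\lesssim m^{-1}\|u\|_{L^2}$ and $\|\nabla u_m\|_{L^2}\lesssim m^{-1/2}\|u\|_{L^2}$, hence $\|u_m\|_{L^2}\|\nabla u_m\|_{L^2}\lesssim m^{-3/2}\|u\|_{L^2}^2$, not $m^{-1}\|u\|_{L^2}^2$. The low-$m$ contribution is therefore $\lesssim\|\nabla\varphi\|_{L^\infty}\|u\|_{L^2}^2\int_0^R m^{s-3/2}\,dm\sim R^{s-1/2}\|\nabla\varphi\|_{L^\infty}\|u\|_{L^2}^2$, and this integral converges precisely because $s>1/2$ --- this is where the hypothesis $s\in(1/2,1)$ enters. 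Your version $\int_0^R m^{s-1}\,dm$ would converge for every $s>0$, which is a sign that something is off.

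A second, related problem is that your final optimization does not reproduce the stated constant even on its own terms: balancing $R^{s-1}\|\Delta\varphi\|_{L^\infty}$ against $R^{s}\|\nabla\varphi\|_{L^\infty}$ at $R\sim\|\Delta\varphi\|_{L^\infty}/\|\nabla\varphi\|_{L^\infty}$ gives $\|\Delta\varphi\|_{L^\infty}^{s}\|\nabla\varphi\|_{L^\infty}^{1-s}$, not $\|\Delta\varphi\|_{L^\infty}^{2s-1}\|\nabla\varphi\|_{L^\infty}^{2-2s}$. With the corrected low-$m$ exponent the two contributions are $R^{s-1/2}\|\nabla\varphi\|_{L^\infty}+R^{s-1}\|\Delta\varphi\|_{L^\infty}$, which balance at $R\sim\left(\|\Delta\varphi\|_{L^\infty}/\|\nabla\varphi\|_{L^\infty}\right)^2$ and do yield $\|\Delta\varphi\|_{L^\infty}^{2s-1}\|\nabla\varphi\|_{L^\infty}^{2-2s}\|u\|_{L^2}^2$; this is exactly how the paper concludes. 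So the gap is fixable, and the repaired argument coincides with the paper's proof.
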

\begin{proof}
	The proof is essentially given in \cite[Lemma A.2]{BouHimLen}. For the reader's convenience, we give some details. We split the $m$-integral into $\int_0^{\tau}\cdots$ and $\int_{\tau}^\infty \cdots$ with $\tau>0$ to be chosen later. By integration by parts and H\"older's inequality, we learn that
	\begin{align*}
	\left| \int_0^\tau m^s \int \Delta \varphi |u_m|^2 dx dm \right| &= \left| \int_0^\tau m^s \int \nabla \varphi \cdot \left( \nabla u_m \overline{u}_m + u_m \nabla \overline{u}_m\right)dxdm\right| \\
	&= \|\nabla \varphi\|_{L^\infty} \int_0^\tau m^s \|\nabla u_m\|_{L^2} \|u_m\|_{L^2} dm \\
	&\lesssim \|\nabla \varphi\|_{L^\infty} \|u\|^2_{L^2} \left(\int_0^\tau m^{s-3/2} dm \right) \\
	&\lesssim \tau^{s-1/2} \|\nabla \varphi\|_{L^\infty} \|u\|^2_{L^2}.
	\end{align*} 
	Here we use the fact 
	\[
	\|\nabla u_m \|_{L^2} \lesssim m^{-1/2} \|u\|_{L^2}, \quad \|u_m\|_{L^2} \lesssim m^{-1} \|u\|_{L^2},
	\]
	which follows directly from the definition of $u_m$. On the other hand, 
	we find that
	\begin{align*}
	\left| \int_{\tau}^\infty m^s \int \Delta \varphi |u_m|^2 dx dm \right| & \lesssim \|\Delta \varphi\|_{L^\infty} \left(\int_\tau^\infty m^s \|u_m\|^2_{L^2} dm \right) \\
	&\lesssim \|\Delta\varphi\|_{L^\infty} \|u\|^2_{L^2} \left(\int_\tau^\infty m^{s-2} dm \right) \\
	&\lesssim \tau^{s-1}\|\Delta \varphi\|_{L^\infty} \|u\|^2_{L^2}.
	\end{align*}
	Collecting the above estimates, we obtain
	\[
	\left| \int_0^\infty m^s \int \Delta \varphi |u_m|^2 dx dm\right| \lesssim \left(\tau^{s-1/2} \|\nabla \varphi\|_{L^\infty} + \tau^{s-1} \|\Delta\varphi \|_{L^\infty}  \right) \|u\|^2_{L^2},
	\]
	for arbitrary $\tau>0$. Minimizing the right hand side with respect to $\tau$, i.e. choosing $\tau = \frac{(1-s)^2\|\Delta \varphi\|^2_{L^\infty}}{(s-1/2)^2\|\nabla \varphi\|^2_{L^\infty}}$, we complete the proof. 
\end{proof}
	\begin{lem} \label{lemma various estimate 3}
	Let $d\geq 1$, $s\in (1/2,1)$ and $\varphi: \R^d \rightarrow \R$ be such that $\nabla \varphi \in W^{1,\infty}$. Then for any $u \in H^{1/2}$, it holds that 
	\begin{align}
	\left| \int_0^\infty m^s \int \overline{u}_m \nabla \varphi \cdot \nabla u_m dx dm\right| \leq C \|\nabla \varphi\|_{W^{1,\infty}} \|u\|^2_{H^{1/2}}, \label{various estimate 3}
	\end{align}
	for some constant $C>0$ depending only on $d$, where $u_m$ is given in $(\ref{define auxiliary function})$. 
	\end{lem}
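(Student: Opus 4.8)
The plan is to control, for each fixed $m>0$, the quantity
\[
B_m:=\int \overline{u}_m\,\nabla\varphi\cdot\nabla u_m\,dx,
\]
and then to integrate $m^s|B_m|$ over $m\in(0,\infty)$, splitting the $m$-integral at $m=1$. The reason for the splitting is that the two ranges call for genuinely different estimates. For $m\ge 1$ the resolvent $u_m=c_s(-\Delta+m)^{-1}u$ is smooth and strongly decaying in $m$, and one can afford to apply Lemma $\ref{lemma various estimate 1}$ directly with $u_m$ in place of $u$. For $0<m\le 1$ this would be too lossy; instead one keeps the differential structure of $B_m$, estimating crudely $|B_m|\le\|\nabla\varphi\|_{L^\infty}\|u_m\|_{L^2}\|\nabla u_m\|_{L^2}$ and using only $\|\nabla\varphi\|_{L^\infty}$.

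For $0<m\le 1$ I would combine the Cauchy--Schwarz inequality with the bounds $\|u_m\|_{L^2}\lesssim m^{-1}\|u\|_{L^2}$ and $\|\nabla u_m\|_{L^2}\lesssim m^{-1/2}\|u\|_{L^2}$ already recorded in the proof of Lemma $\ref{lemma various estimate 2}$, obtaining $m^s|B_m|\lesssim\|\nabla\varphi\|_{L^\infty}\,m^{s-3/2}\|u\|_{L^2}^2$. The integral of $m^{s-3/2}$ over $(0,1]$ is finite precisely because $s>1/2$, which is where the standing hypothesis $s\in(1/2,1)$ enters; this range contributes $\lesssim\|\nabla\varphi\|_{L^\infty}\|u\|_{L^2}^2$.

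For $m\ge 1$ I would apply Lemma $\ref{lemma various estimate 1}$ to $u_m$, which reduces matters to bounding $\int_1^\infty m^s\||\nabla|^{1/2}u_m\|_{L^2}^2\,dm$ and the mixed integral $\int_1^\infty m^s\|u_m\|_{L^2}\||\nabla|^{1/2}u_m\|_{L^2}\,dm$. By Plancherel's theorem and Fubini's theorem the first equals $c_s^2\int|\xi|\,|\hat{u}(\xi)|^2\big(\int_1^\infty m^s(|\xi|^2+m)^{-2}\,dm\big)\,d\xi$, and the inner $m$-integral, after the substitution $m=|\xi|^2 t$, is a Beta integral equal to $C_s|\xi|^{2s-2}$ with $C_s<\infty$ since $s\in(0,1)$; truncation to $m\ge 1$ only improves this to $O(\langle\xi\rangle^{2s-2})$. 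Because $2s-1\le 1$ one has $|\xi|\langle\xi\rangle^{2s-2}\lesssim\langle\xi\rangle$ and $\langle\xi\rangle^{2s-2}\le 1$, so $\int_1^\infty m^s\||\nabla|^{1/2}u_m\|_{L^2}^2\,dm\lesssim\|u\|_{H^{1/2}}^2$ and (analogously, dropping the factor $|\xi|$) $\int_1^\infty m^s\|u_m\|_{L^2}^2\,dm\lesssim\|u\|_{L^2}^2$; the mixed integral then follows by the Cauchy--Schwarz inequality in $m$ between these two. Hence $\int_1^\infty m^s|B_m|\,dm\lesssim\|\nabla\varphi\|_{W^{1,\infty}}\|u\|_{H^{1/2}}^2$. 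Adding the two ranges and using $\big|\int_0^\infty m^s B_m\,dm\big|\le\int_0^\infty m^s|B_m|\,dm$ yields $(\ref{various estimate 3})$.

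The step I expect to be the real obstacle is the small-$m$ regime: applying Lemma $\ref{lemma various estimate 1}$ uniformly in $m$ would force one to control $\int_0^\infty m^s\|u_m\|_{L^2}\||\nabla|^{1/2}u_m\|_{L^2}\,dm$, which by the Beta computation is comparable to $\||\nabla|^{s-1}u\|_{L^2}\,\||\nabla|^{s-1/2}u\|_{L^2}$, and the factor $\||\nabla|^{s-1}u\|_{L^2}$ is a negative-order Sobolev norm (as $s<1$) that is \emph{not} dominated by $\|u\|_{H^{1/2}}$. Trading a derivative through $\|\nabla\varphi\|_{L^\infty}$ for small $m$ — which replaces the dangerous $m^{-2}$ coming from $\|u_m\|_{L^2}^2$ by the integrable $m^{-3/2}$ — is the one nonroutine point; the remainder is bookkeeping with Plancherel and elementary one-dimensional integrals.
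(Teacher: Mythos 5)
Your proposal is correct and follows essentially the same route as the paper: split the $m$-integral at a fixed threshold, estimate the small-$m$ range by Cauchy--Schwarz together with the resolvent bounds $\|u_m\|_{L^2}\lesssim m^{-1}\|u\|_{L^2}$, $\|\nabla u_m\|_{L^2}\lesssim m^{-1/2}\|u\|_{L^2}$ (using $s>1/2$), and apply Lemma \ref{lemma various estimate 1} to $u_m$ on the large-$m$ range (using $s<1$). The only cosmetic difference is that for $m\ge 1$ you sum the tail via Plancherel, Fubini and Cauchy--Schwarz in $m$, whereas the paper simply uses $\||\nabla|^{1/2}u_m\|_{L^2}\lesssim m^{-1}\||\nabla|^{1/2}u\|_{L^2}$ and integrates $m^{s-2}$; both yield the same bound.
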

	\begin{proof}
		As in the proof of Lemma $\ref{lemma various estimate 2}$, we split the $m$-integral into two parts $\int_0^\tau\cdots$ and $\int_\tau^\infty \cdots$ with $\tau>0$ to be chosen shortly. By H\"older's inequality, we estimate the first term as
		\begin{align*}
		\left|\int_0^\tau m^s \int \overline{u}_m \nabla \varphi \cdot \nabla u_m dx dm \right| & \lesssim \|\nabla \varphi\|_{L^\infty} \int_0^\tau m^s \|u_m\|_{L^2} \|\nabla u_m\|_{L^2} dm \\
		&\lesssim \|\nabla \varphi\|_{L^\infty} \|u\|^2_{L^2} \left( \int_0^\tau m^{s-3/2} dm\right) \\
		&\lesssim \tau^{s-1/2} \|\nabla \varphi\|_{L^\infty} \|u\|^2_{L^2}.
		\end{align*}
		For the second term, we use \footnote{ The smoothing operator $(-\Delta+m)^{-1}$ implies that $u_m \in H^{\rho+2}$ whenever $u \in H^\rho$. Hence the hypotheses of Lemma $\ref{lemma various estimate 1}$ are satisfied.} Lemma $\ref{lemma various estimate 1}$ to get
		\begin{align*}
		\left| \int_\tau^\infty m^s \right.& \left.\int \overline{u}_m \nabla \varphi \cdot \nabla u_m dx dm  \right| \\
		&\lesssim \|\nabla \varphi\|_{W^{1,\infty}} \int_\tau^\infty m^s \left(\||\nabla|^{1/2} u_m\|^2_{L^2} + \|u_m\|_{L^2} \||\nabla|^{1/2} u_m\|_{L^2} \right) dm \\
		&\lesssim \|\nabla \varphi\|_{W^{1,\infty}}  \left(\||\nabla|^{1/2} u\|^2_{L^2} + \|u\|_{L^2} \||\nabla|^{1/2} u\|_{L^2} \right) \left(\int_\tau^\infty m^{s-2} dm \right) \\
		&\lesssim \tau^{s-1} \|\nabla \varphi\|_{W^{1,\infty}} \left(\||\nabla|^{1/2} u\|^2_{L^2} + \|u\|_{L^2} \||\nabla|^{1/2} u\|_{L^2} \right).
		\end{align*}
		Collecting two terms, we get
		\begin{align*}
		\left| \int_0^\infty m^s \int \overline{u}_m \nabla \varphi \cdot \nabla u_m dx dm \right| \lesssim \left(\tau^{s-1/2} \|\nabla \varphi\|_{L^\infty} + \tau^{s-1} \|\nabla \varphi\|_{W^{1,\infty}} \right) \|u\|^2_{H^{1/2}},
		\end{align*}
		for any $\tau>0$. Taking $\tau=1$, we prove $(\ref{various estimate 3})$. 
	\end{proof}
	\begin{lem}[\cite{BouHimLen}] \label{lemma various estimate 4}
		Let $d\geq 1$, $s\in (0,1)$ and $\varphi: \R^d \rightarrow \R$ be such that $\Delta \varphi \in W^{2,\infty}$. Then for any $u \in L^2$, it holds that
		\begin{align}
		\left| \int_0^\infty m^s \int \Delta^2 \varphi |u_m|^2 dx dm\right| \leq C \|\Delta^2 \varphi\|^s_{L^\infty} \|\Delta \varphi\|^{1-s}_{L^\infty} \|u\|^2_{L^2}, \label{various estimate 4}
		\end{align}
		for some constant $C>0$ depending only on $s$ and $d$, where $u_m$ is given in $(\ref{define auxiliary function})$.
	\end{lem}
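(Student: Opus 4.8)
The plan is to mimic exactly the splitting argument used in the proof of Lemma \ref{lemma various estimate 2}, now with the biharmonic weight $\Delta^2\varphi$ in place of $\Delta\varphi$ and with $\Delta\varphi$ playing the role that $\nabla\varphi$ played there. Concretely, I would fix $\tau>0$, split the $m$-integral as $\int_0^\infty = \int_0^\tau + \int_\tau^\infty$, and estimate the two pieces by two genuinely different methods: for the small-$m$ part I integrate by parts twice to move derivatives off $\Delta^2\varphi$ and onto the $u_m$ factors, gaining a factor $\|\Delta\varphi\|_{L^\infty}$ and $\|\nabla u_m\|_{L^2}^2$ (or $\|u_m\|_{L^2}\|\Delta u_m\|_{L^2}$ after a further integration by parts); for the large-$m$ part I estimate crudely by H\"older, pulling out $\|\Delta^2\varphi\|_{L^\infty}$ and $\|u_m\|_{L^2}^2$.

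The key inputs are the smoothing bounds for $u_m = c_s(-\Delta+m)^{-1}u$, namely
\[
\|u_m\|_{L^2}\lesssim m^{-1}\|u\|_{L^2},\qquad \||\nabla| u_m\|_{L^2}\lesssim m^{-1/2}\|u\|_{L^2},\qquad \|\Delta u_m\|_{L^2}\lesssim \|u\|_{L^2},
\]
all immediate from the Fourier multipliers $\tfrac{1}{|\xi|^2+m}$, $\tfrac{|\xi|}{|\xi|^2+m}$, $\tfrac{|\xi|^2}{|\xi|^2+m}$ being bounded by $m^{-1}$, $\tfrac12 m^{-1/2}$, $1$ respectively. For the small-$m$ piece, integrating by parts twice gives a sum of terms each controlled by $\|\Delta\varphi\|_{L^\infty}\|\Delta u_m\|_{L^2}\|u_m\|_{L^2}\lesssim \|\Delta\varphi\|_{L^\infty}m^{-1}\|u\|_{L^2}^2$ (and lower-order terms with $\|\nabla u_m\|_{L^2}^2\lesssim m^{-1}\|u\|_{L^2}^2$ as well), so $\int_0^\tau m^s\cdot m^{-1}\,dm\lesssim \tau^s$, yielding the bound $\tau^{s}\|\Delta\varphi\|_{L^\infty}\|u\|_{L^2}^2$. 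For the large-$m$ piece, $\int_\tau^\infty m^s\|u_m\|_{L^2}^2\,dm\lesssim \|u\|_{L^2}^2\int_\tau^\infty m^{s-2}\,dm\lesssim \tau^{s-1}\|u\|_{L^2}^2$, giving $\tau^{s-1}\|\Delta^2\varphi\|_{L^\infty}\|u\|_{L^2}^2$. Adding the two and optimizing in $\tau$ — i.e. balancing $\tau^s\|\Delta\varphi\|_{L^\infty}$ against $\tau^{s-1}\|\Delta^2\varphi\|_{L^\infty}$, which forces $\tau\sim \|\Delta^2\varphi\|_{L^\infty}/\|\Delta\varphi\|_{L^\infty}$ — produces the claimed $\|\Delta^2\varphi\|_{L^\infty}^s\|\Delta\varphi\|_{L^\infty}^{1-s}\|u\|_{L^2}^2$.

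The main obstacle, such as it is, is bookkeeping rather than conceptual: one must track all the terms produced by integrating $\int \Delta^2\varphi\,|u_m|^2$ by parts twice (there will be several, involving $\nabla(\Delta\varphi)$, $\Delta\varphi$, and the various derivatives of $u_m$), and check that \emph{every} such term is dominated, for small $m$, by $\|\Delta\varphi\|_{L^\infty}m^{-1}\|u\|_{L^2}^2$ up to the harmless lower powers of $m$. In particular the cross term with one derivative on $\Delta\varphi$ and one on $u_m$ gives $\|\nabla\Delta\varphi\|_{L^\infty}\|\nabla u_m\|_{L^2}\|u_m\|_{L^2}$; since the hypothesis is $\Delta\varphi\in W^{2,\infty}$ one has $\|\nabla\Delta\varphi\|_{L^\infty}\lesssim \|\Delta\varphi\|_{W^{2,\infty}}$, but to land on exactly the stated right-hand side one should instead redistribute derivatives so that only $\|\Delta\varphi\|_{L^\infty}$ and $\|\Delta^2\varphi\|_{L^\infty}$ appear; this is achieved by integrating by parts so that both derivatives from $\Delta^2\varphi=\Delta(\Delta\varphi)$ act on the product $|u_m|^2$, leaving $\int \Delta\varphi\,\Delta(|u_m|^2)\,dx$, and then expanding $\Delta(|u_m|^2)=2\,\mathrm{Re}(\overline{u}_m\Delta u_m)+2|\nabla u_m|^2$. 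As in Lemma \ref{lemma various estimate 2}, the result is essentially \cite[Lemma A.2]{BouHimLen}, so I would state it with that attribution and include only this brief indication of the argument.
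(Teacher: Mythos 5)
Your proposal is correct and follows exactly the splitting-and-optimization scheme that the paper itself uses for Lemma \ref{lemma various estimate 2} (the paper does not reprove Lemma \ref{lemma various estimate 4}, deferring instead to \cite[Appendix A]{BouHimLen}, where the same argument appears). The key points are all in place: integrating by parts so both derivatives of $\Delta^2\varphi=\Delta(\Delta\varphi)$ fall on $|u_m|^2$ (thereby avoiding $\|\nabla\Delta\varphi\|_{L^\infty}$), the resolvent bounds $\|u_m\|_{L^2}\lesssim m^{-1}\|u\|_{L^2}$, $\|\nabla u_m\|_{L^2}\lesssim m^{-1/2}\|u\|_{L^2}$, $\|\Delta u_m\|_{L^2}\lesssim\|u\|_{L^2}$, and the choice $\tau\sim\|\Delta^2\varphi\|_{L^\infty}/\|\Delta\varphi\|_{L^\infty}$, which balances $\tau^{s}\|\Delta\varphi\|_{L^\infty}$ against $\tau^{s-1}\|\Delta^2\varphi\|_{L^\infty}$ and yields precisely the stated bound for all $s\in(0,1)$.
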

	We refer the reader to \cite[Appendix A]{BouHimLen} for the proofs of Lemmas $\ref{lemma various estimate 1}$ and $\ref{lemma various estimate 4}$. We also note that by using the fact 
	\[
	\frac{\sin \pi s}{\pi} \int_0^\infty \frac{m^s}{(|\xi|^2+m)^2} dm = s |\xi|^{2s-2},
	\]
	the Plancherel's and Fubini's theorems imply the following useful identity
	\begin{align}
	\begin{aligned}
	\int_0^\infty m^s \int |\nabla u_m|^2 dx dm &= \int \left(\frac{\sin \pi s}{\pi} \int_0^\infty \frac{m^s dm}{(|\xi|^2+m)^2} \right) |\xi|^2 |\hat{u}(\xi)|^2 d\xi \\
	&= \int (s|\xi|^{2s-2})|\xi|^2 |\hat{u}(\xi)|^2 d\xi=s \|(-\Delta)^{s/2} u\|^2_{L^2},
	\end{aligned}
	\label{auxiliary identity}
	\end{align}
	for any $u \in \dot{H}^s$. 
	
	Now, let $d\geq 1$, $1/2 < s <1$ and $\varphi : \R^d \rightarrow \R$ be such that $\varphi \in W^{2,\infty}$. Assume $u \in C([0,T), H^s)$ is a solution to $(\ref{focusing FNLS})$. Note that in \cite{BouHimLen}, the authors derive virial estimates by assuming that the solution $u(t)$ belongs to $H^{2s}$ for any $t\in [0,T)$. This regularity assumption is neccessary due to the lack of local theory at the time. By the local theory given in Section $\ref{section local theory}$, one can extend virial estimates to $u\in C([0,T), H^s)$ by an approximation argument. The type-I localized virial action of $u$ associated to $\varphi$ is defined by
	\begin{align}
	V_\varphi(u(t)) := \int \varphi (x) |u(t,x)|^2 dx. \label{type-I localized virial action}
	\end{align}
	\begin{lem} [Localized virial identity I] \label{lemma localized virial identity I}
		Let $d\geq 1, 1/2 < s <1$ and $\varphi: \R^d \rightarrow \R$ be such that $\varphi \in W^{2,\infty}$. Assume that $u \in C([0,T), H^s)$ is a solution to $(\ref{focusing FNLS})$. Then for any $t\in [0,T)$, it holds that
		\begin{multline}
		\frac{d}{dt} V_\varphi(u(t)) \\
		= -i\int_0^\infty m^s \int \Delta \varphi |u_m(t)|^2 dx dm -2i \int_0^\infty m^s \int \overline{u}_m(t) \nabla \varphi \cdot \nabla u_m(t) dx dm, \label{localized virial identity I}
		\end{multline}
		where $u_m(t) = c_s(-\Delta+m)^{-1} u(t)$.
	\end{lem}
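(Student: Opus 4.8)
The plan is to compute $\frac{d}{dt} V_\varphi(u(t))$ formally first, assuming enough regularity, and then justify the computation in the class $C([0,T),H^s)$ by the approximation argument advertised in the text. First I would differentiate under the integral sign:
\[
\frac{d}{dt} V_\varphi(u(t)) = 2\,\re{\int \varphi\, \overline{u}(t)\, \partial_t u(t)\, dx}.
\]
Using the equation $(\ref{focusing FNLS})$ to replace $\partial_t u = -i(-\Delta)^s u - i|u|^\alpha u$, the nonlinear term contributes $2\,\re{\int \varphi\, \overline{u}\, (-i|u|^\alpha u)\, dx} = 2\,\im{\int \varphi |u|^{\alpha+2}\, dx} = 0$ since $\varphi$ is real-valued. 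Hence only the dispersive term survives, giving
\[
\frac{d}{dt} V_\varphi(u(t)) = 2\,\re\Big( -i \int \varphi\, \overline{u}\, (-\Delta)^s u\, dx \Big) = 2\,\im{\int \varphi\, \overline{u}\, (-\Delta)^s u\, dx}.
\]

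The heart of the matter is to rewrite $\im{\int \varphi\, \overline{u}\, (-\Delta)^s u\, dx}$ in terms of the auxiliary functions $u_m$, and this is where the Balakrishnan formula
\[
(-\Delta)^s = \frac{\sin \pi s}{\pi} \int_0^\infty m^{s-1} \frac{-\Delta}{-\Delta+m}\, dm
\]
enters. Substituting $c_s^2 = \frac{\sin\pi s}{\pi}$ and $u_m = c_s(-\Delta+m)^{-1}u$, one has $\frac{-\Delta}{-\Delta+m}u = u - m(-\Delta+m)^{-1}u$, but it is cleaner to write $\frac{-\Delta}{-\Delta+m} = (-\Delta+m)^{-1}(-\Delta)$ and pass the $(-\Delta+m)^{-1}$ onto $\overline{u}$ (it is self-adjoint), producing $\overline{u}_m$ against $-\Delta u_m$ after also moving one resolvent onto $u$; more precisely, the key algebraic identity is
\[
\int \varphi\, \overline{u}\, \frac{-\Delta}{-\Delta+m}\, u\, dx = c_s^{-2} \int \varphi\, \overline{u}_m\, (-\Delta) u_m\, dx \cdot m \;+\; (\text{correction}),
\]
which I would sort out by writing $u = c_s^{-1}(-\Delta+m)u_m$ on one side and $\overline{u} = c_s^{-1}(-\Delta+m)\overline{u}_m$ on the other, so that $\int \varphi\,\overline{u}\,\frac{-\Delta}{-\Delta+m}u\,dx = c_s^{-2}\int \varphi\,(-\Delta+m)\overline{u}_m\,(-\Delta)u_m\,dx$ — wait, this introduces $m$-growth, so instead I keep only one resolvent moved: using self-adjointness of $(-\Delta+m)^{-1}$ and $(-\Delta)(-\Delta+m)^{-1} = (-\Delta+m)^{-1}(-\Delta)$,
\[
\int \varphi\, \overline{u}\, \frac{-\Delta}{-\Delta+m} u\, dx
= \int \big((-\Delta+m)^{-1}(\varphi \overline{u})\big)\,(-\Delta) u\, dx,
\]
and then commuting $\varphi$ past $(-\Delta+m)^{-1}$ generates a commutator $[(-\Delta+m)^{-1},\varphi]$; the non-commutator part yields, after integrating by parts in $x$ twice, exactly the two terms $\int \Delta\varphi\,|u_m|^2$ and $\int \overline{u}_m\nabla\varphi\cdot\nabla u_m$ on the right-hand side of $(\ref{localized virial identity I})$, once one takes the imaginary part (the commutator terms should combine to reproduce the same structure, using $\re{}$ vs $\im{}$ cancellations). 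Then I multiply by $m^{s-1}\cdot m$? No — the weight: since $(-\Delta)u_m = c_s(-\Delta)(-\Delta+m)^{-1}u$ and the formula carries $m^{s-1}$, while expressing things through $u_m$ and $\nabla u_m$ absorbs factors, the net weight on the $m$-integral becomes $m^s$, matching the statement. The convergence of the $m$-integral at $0$ and $\infty$ is exactly what Lemmas \ref{lemma various estimate 2} and \ref{lemma various estimate 3} guarantee, so the manipulations are legitimate and the identity $(\ref{localized virial identity I})$ follows; the factor $2$ on the second term comes from symmetrizing $\overline{u}_m\nabla\varphi\cdot\nabla u_m + u_m\nabla\varphi\cdot\nabla\overline{u}_m$ and taking the imaginary part of the single non-conjugate-symmetric piece.

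The main obstacle is the rigorous justification that all the integration-by-parts and Fubini steps are valid for $u \in C([0,T),H^s)$ only, rather than $u\in H^{2s}$: the product $\varphi \overline{u}\,(-\Delta)^s u$ need not be absolutely integrable a priori. The standard remedy, which I would follow, is to approximate $u_0$ by smooth, rapidly decreasing data $u_0^{(n)}\to u_0$ in $H^s$, invoke the local well-posedness of Section \ref{section local theory} (and persistence of higher regularity) to get smooth solutions $u^{(n)}$ on a common interval, establish $(\ref{localized virial identity I})$ for each $u^{(n)}$ where every step is manifestly justified, and then pass to the limit: the left-hand side converges because $V_\varphi$ is continuous on $L^2$ (as $\varphi \in W^{2,\infty} \subset L^\infty$) and $u^{(n)}(t)\to u(t)$ in $L^2$ uniformly on compact time intervals, while the right-hand side converges by the continuity of the multilinear expressions in the $H^s$ (indeed $H^{1/2}$) norm of $u$, which is precisely the content of the bounds in Lemmas \ref{lemma various estimate 2} and \ref{lemma various estimate 3} together with $\|u^{(n)}_m - u_m\|$-type estimates inherited from $\|u^{(n)}-u\|_{H^s}\to 0$. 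One also needs that $\frac{d}{dt}$ and the limit commute, which follows from uniform-in-$n$ bounds on $\frac{d}{dt}V_\varphi(u^{(n)})$ on compact intervals, again supplied by Lemmas \ref{lemma various estimate 2}–\ref{lemma various estimate 3}. I expect the algebraic bookkeeping of the commutator terms to be the most error-prone part, but structurally it must close because the $s=1$ case $(\ref{virial action})$ is recovered in the limit.
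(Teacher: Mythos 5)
Your overall skeleton coincides with the paper's: differentiate $V_\varphi$, note that the nonlinear term drops out because $\varphi$ is real, reduce to
$\frac{d}{dt}V_\varphi(u(t)) = 2\,\im{\int \varphi\,\overline{u}\,(-\Delta)^s u\,dx} = i\scal{u,[(-\Delta)^s,\varphi]u}$,
insert the Balakrishnan representation $(\ref{balakrishnan formula})$, and justify everything for $H^s$ solutions by density. However, the central algebraic step --- the one that converts the $m^{s-1}$-weighted Balakrishnan integral into an $m^s$-weighted integral of \emph{local} expressions in $u_m$ --- is left unresolved in your write-up, and the partial computation you offer in its place is not correct as stated. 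The missing ingredient is the exact resolvent commutator identity (this is $(\ref{commutator identity})$ in the paper)
\[
\Big[\tfrac{-\Delta}{-\Delta+m},\varphi\Big] = -m\Big[\tfrac{1}{-\Delta+m},\varphi\Big] = m\,\tfrac{1}{-\Delta+m}\,[-\Delta,\varphi]\,\tfrac{1}{-\Delta+m},
\]
which in one stroke (a) places one resolvent on each side of $[-\Delta,\varphi]$, so that both copies of $u$ in the quadratic form become $u_m$; (b) supplies the extra factor of $m$ that upgrades the weight $m^{s-1}$ to $m^s$; and (c) reduces everything to the first-order local operator $[-\Delta,\varphi]=-\Delta\varphi-2\nabla\varphi\cdot\nabla$, whose two pieces are precisely the two terms of $(\ref{localized virial identity I})$.

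Your attempted route --- move a single resolvent onto $\varphi\overline{u}$ by self-adjointness and then commute $\varphi$ past $(-\Delta+m)^{-1}$ --- has the roles of the two resulting pieces reversed. The ``non-commutator'' piece is (up to a constant) $\int\varphi\,\overline{u}_m\,(-\Delta)u\,dx$, which carries different numbers of resolvents on its two factors and still the weight $m^{s-1}$; it does not ``yield exactly the two terms'' of the identity. What actually happens is that this symmetric piece cancels entirely once you pass to the imaginary part (equivalently: only the commutator survives in $\scal{u,[(-\Delta)^s,\varphi]u}$), and it is the \emph{commutator} term, evaluated via the identity above, that produces $-\int\Delta\varphi\,|u_m|^2 - 2\int\overline{u}_m\nabla\varphi\cdot\nabla u_m$ after a single integration by parts. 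Since you explicitly flag this bookkeeping as unresolved and your stated mechanism for recovering the $m^s$ weight is a question mark, this is a genuine gap rather than a stylistic difference. Once the commutator identity is in hand, the remainder of your argument --- including the reduction to the dispersive term and the approximation from $C_0^\infty$ data, with convergence controlled by Lemmas $\ref{lemma various estimate 2}$ and $\ref{lemma various estimate 3}$ --- matches the paper's proof and goes through.
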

	\begin{proof}
		We only verify $(\ref{localized virial identity I})$ for $u \in C^\infty_0(\R^d)$. The general case follows by an approximation argument (see \cite[Lemma 2.1]{BouHimLen}). By definition, we see that
		\[
		V_\varphi(u(t)) = \scal{u(t), \varphi u(t)},
		\]
		where $\scal{u,v}$ is the scalar product in $L^2$. Taking the time derivative and using that $u(t)$ solves $(\ref{focusing FNLS})$, we have
		\begin{align}
		\frac{d}{dt} V_\varphi(u(t)) = i\scal{u(t), [(-\Delta)^s, \varphi] u(t)}, \label{time derivative virial action}
		\end{align}
		where $[X,Y] = XY-YX$ denotes the commutator of $X$ and $Y$. To study $[(-\Delta)^s, \varphi]$, we recall the following Balakrishnan's formula 
		\begin{align}
		(-\Delta)^s =\frac{\sin \pi s}{\pi} \int_0^\infty m^{s-1} \frac{-\Delta}{-\Delta +m} dm. \label{balakrishnan formula}
		\end{align}
		This formula follows from spectral calculus applied to the self-adjoint operator $-\Delta$ and the identity
		\[
		x^s = \frac{\sin \pi s}{\pi} \int_0^\infty m^{s-1} \frac{x}{x+m} dm 
		\]
		which is avalable for any $x>0$ and $s \in (0,1)$. The Balakrishnan's representation formula $(\ref{balakrishnan formula})$  for the fractional Laplacian $(-\Delta)^s$ was firstly used in \cite{KriegerLenzmannRaphael} to study the nonlinear half-wave equation, i.e. $(\ref{focusing FNLS})$ with $s=1/2$. We also have the following commutator identity 
		\begin{align}
		\left[ \frac{A}{A+m}, B\right] = \left[\mathds{1} - \frac{m}{A+m}, B \right] = -m \left[\frac{1}{A+m}, B\right] = m \frac{1}{A+m} [A,B] \frac{1}{A+m}, \label{commutator identity}
		\end{align}
		for operators $A\geq 0$ and $B$, where $m>0$ is any positive real number. Using $(\ref{balakrishnan formula})$, we apply $(\ref{commutator identity})$ with $A=-\Delta$ to get
		\begin{align}
		[(-\Delta)^s, B] = \frac{\sin \pi s}{\pi} \int_0^\infty m^s \frac{1}{-\Delta +m} [-\Delta, B] \frac{1}{-\Delta+m} dm. \label{commutator identity fractional laplacian}
		\end{align}
		Applying the above identity with $B=\varphi$ and using the fact
		\[
		[-\Delta,\varphi] = -\Delta \varphi - 2 \nabla \varphi \cdot \nabla,
		\]
		the integration by parts yields $(\ref{localized virial identity I})$. Indeed,
		\begin{align*}
		\scal{u(t), [(-\Delta)^s, \varphi] u(t)} &= \scal{ u(t), \left( \frac{\sin \pi s}{\pi} \int_0^\infty m^s \frac{1}{-\Delta +m} [-\Delta, \varphi] \frac{1}{-\Delta+m} dm\right) u(t) } \\
		&= \frac{\sin \pi s}{\pi} \int_0^\infty m^s \scal{u(t), \frac{1}{-\Delta +m} [-\Delta, \varphi] \frac{1}{-\Delta +m} u(t) } dm \\
		&= \int_0^\infty m^s \scal{c_s (-\Delta+m)^{-1} u(t), [-\Delta, \varphi] c_s (-\Delta+m)^{-1} u(t)} dm \\
		&= \int_0^\infty m^s \int \overline{u}_m(t) (-\Delta \varphi u_m(t) -2 \nabla \varphi \cdot \nabla u_m(t)) dx dm\\
		&= \int_0^\infty m^s \int \left( -\Delta \varphi |u_m(t)|^2  - 2 \overline{u}_m(t) \nabla \varphi \cdot \nabla u_m(t) \right) dx dm.
		\end{align*}
		The proof is complete.		
	\end{proof}
	A direct consequence of Lemmas $\ref{lemma various estimate 2}$, $\ref{lemma various estimate 3}$, $\ref{lemma localized virial identity I}$ and the fact $\|\nabla \varphi\|_{W^{1,\infty}} \sim \|\nabla \varphi\|_{L^\infty} + \|\Delta \varphi\|_{L^\infty}$ is the following estimate.
	\begin{coro} \label{corollary localized virial estimate 1}
		Let $d\geq 1$, $1/2<s<1$ and $\varphi:\R^d \rightarrow \R$ be such that $\varphi \in W^{2,\infty}$. Assume that $u \in C([0,T), H^s)$ is a solution to $(\ref{focusing FNLS})$. Then for any $t\in [0,T)$,
		\begin{align}
		\left| \frac{d}{dt} V_\varphi (u(t)) \right| \leq  C\|\nabla \varphi\|_{W^{1,\infty}} \|u(t)\|^2_{H^s},
		\end{align}
		for some constant $C>0$ depending only on $s$ and $d$. 
	\end{coro}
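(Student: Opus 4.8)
The plan is to read the estimate off directly from the localized virial identity of Lemma~\ref{lemma localized virial identity I}, combined with the two bounds established in Lemmas~\ref{lemma various estimate 2} and~\ref{lemma various estimate 3}. First I would observe that the hypothesis $\varphi\in W^{2,\infty}$ guarantees $\nabla\varphi\in W^{1,\infty}$, so all three lemmas apply to $u(t)$, which lies in $H^s\subset H^{1/2}$ since $s>1/2$. Applying Lemma~\ref{lemma localized virial identity I} and the triangle inequality gives
\[
\left|\frac{d}{dt} V_\varphi(u(t))\right| \leq \left|\int_0^\infty m^s \int \Delta\varphi\, |u_m(t)|^2\, dx\, dm\right| + 2\left|\int_0^\infty m^s \int \overline{u}_m(t)\, \nabla\varphi\cdot\nabla u_m(t)\, dx\, dm\right|.
\]

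For the first term I would invoke Lemma~\ref{lemma various estimate 2}, which bounds it by $C\|\Delta\varphi\|_{L^\infty}^{2s-1}\|\nabla\varphi\|_{L^\infty}^{2-2s}\|u(t)\|_{L^2}^2$; since the two exponents add up to $1$ and each of $\|\Delta\varphi\|_{L^\infty}$ and $\|\nabla\varphi\|_{L^\infty}$ is $\lesssim\|\nabla\varphi\|_{W^{1,\infty}}$, this weighted product is $\lesssim\|\nabla\varphi\|_{W^{1,\infty}}\|u(t)\|_{L^2}^2$. For the second term I would invoke Lemma~\ref{lemma various estimate 3} directly, obtaining the bound $C\|\nabla\varphi\|_{W^{1,\infty}}\|u(t)\|_{H^{1/2}}^2$. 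Summing the two contributions and using $\|u(t)\|_{L^2}\leq\|u(t)\|_{H^{1/2}}\leq\|u(t)\|_{H^s}$ (the last inequality being where $s\geq 1/2$ is used) yields the claimed estimate, with a constant depending only on $s$ and $d$ through the constants of Lemmas~\ref{lemma various estimate 2} and~\ref{lemma various estimate 3}.

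There is no genuinely hard step: the corollary is a bookkeeping consequence of the three preceding lemmas. The only points needing a line of justification are the equivalence $\|\nabla\varphi\|_{W^{1,\infty}}\sim\|\nabla\varphi\|_{L^\infty}+\|\Delta\varphi\|_{L^\infty}$, used to absorb the weighted product of seminorms coming from Lemma~\ref{lemma various estimate 2}, and the embedding $H^s\hookrightarrow H^{1/2}$, which is where the restriction $s>1/2$ enters. Concerning the regularity of the solution, no extra smoothness is needed, since Lemma~\ref{lemma localized virial identity I} has already been stated for $u\in C([0,T),H^s)$ via an approximation argument.
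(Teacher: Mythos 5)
Your proof is correct and follows exactly the route the paper intends: the corollary is stated as a direct consequence of the localized virial identity of Lemma \ref{lemma localized virial identity I} together with Lemmas \ref{lemma various estimate 2} and \ref{lemma various estimate 3} and the equivalence $\|\nabla \varphi\|_{W^{1,\infty}} \sim \|\nabla \varphi\|_{L^\infty} + \|\Delta \varphi\|_{L^\infty}$. Your bookkeeping of the exponents $2s-1$ and $2-2s$ and the embedding $H^s \hookrightarrow H^{1/2}$ fills in the only details the paper leaves implicit.
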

	
	We next define the type-II localized virial action of $u$ associated to $\varphi$ by
	\begin{align}
	M_\varphi(u(t)):= 2 \im{ \int \overline{u}(t,x) \nabla \varphi (x) \cdot \nabla u(t,x)  dx}. \label{localized virial action}
	\end{align}
	Thanks to Lemma $\ref{lemma various estimate 1}$, the quantity $M_\varphi(u(t))$ is well-defined. Indeed, by $(\ref{various estimate 1})$,
	\[
	|M_\varphi(u(t))| \lesssim C(\|\nabla \varphi\|_{L^\infty}, \|\Delta \varphi\|_{L^\infty}) \|u(t)\|^2_{H^{1/2}} \lesssim C(\varphi) \|u(t)\|^2_{H^s} <\infty.
	\]
	We have the following virial identity (see \cite[Lemma 2.1]{BouHimLen}).
	\begin{lem} [Localized virial identity II \cite{BouHimLen}] \label{lemma localized virial identity}
		Let $d\geq 1, 1/2 < s <1$ and $\varphi: \R^d \rightarrow \R$ be such that $\nabla \varphi \in W^{3,\infty}$. Assume that $u \in C([0,T), H^s)$ is a solution to $(\ref{focusing FNLS})$. Then for any $t\in [0,T)$, it holds that
		\begin{align}
		\frac{d}{dt} M_\varphi(u(t)) &= - \int_0^\infty m^s \int \Delta^2 \varphi |u_m(t)|^2 dx dm \nonumber\\
		&\mathrel{\phantom{=}} + 4 \sum_{j,k=1}^d \int_0^\infty m^s \int \partial^2_{jk} \varphi \partial_j \overline{u}_m(t) \partial_k u_m(t) dx dm \label{localized virial identity II} \\
		&\mathrel{\phantom{=}} - \frac{2\alpha}{\alpha+2} \int \Delta \varphi |u(t)|^{\alpha+2} dx, \nonumber
		\end{align}
		where $u_m(t) = c_s(-\Delta+m)^{-1} u(t)$.
	\end{lem}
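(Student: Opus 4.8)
The plan is to follow the proof of Lemma~$\ref{lemma localized virial identity I}$ closely. By the approximation argument of \cite[Lemma~2.1]{BouHimLen}---which now applies directly to solutions $u\in C([0,T),H^s)$ thanks to the local theory of Section~$\ref{section local theory}$, whereas \cite{BouHimLen} had to require $u(t)\in H^{2s}$---it suffices to prove $(\ref{localized virial identity II})$ for $u\in C^\infty_0(\R^d)$. Write $M_\varphi(u(t))=\scal{u(t),\Gamma_\varphi u(t)}$, where $\scal{\cdot,\cdot}$ is the $L^2$-scalar product and $\Gamma_\varphi:=-i(2\nabla\varphi\cdot\nabla+\Delta\varphi)$ is a symmetric first-order differential operator; an integration by parts shows $\scal{u,\Gamma_\varphi u}=2\im{\int\overline{u}\,\nabla\varphi\cdot\nabla u\,dx}$, consistent with $(\ref{localized virial action})$. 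Differentiating in $t$, using that $u(t)$ solves $(\ref{focusing FNLS})$, i.e.\ $i\partial_t u=(-\Delta)^s u-|u|^\alpha u$, and the self-adjointness of $(-\Delta)^s$ and of multiplication by $|u(t)|^\alpha$, one obtains
\begin{align*}
\frac{d}{dt}M_\varphi(u(t))=i\scal{u(t),[(-\Delta)^s,\Gamma_\varphi]u(t)}-i\scal{u(t),[\,|u(t)|^\alpha,\Gamma_\varphi\,]u(t)}.
\end{align*}

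The nonlinear commutator is elementary: $|u|^\alpha$ commutes with multiplication by $\Delta\varphi$, while $[\,|u|^\alpha,-2i\nabla\varphi\cdot\nabla\,]$ acts as multiplication by $2i\,\nabla\varphi\cdot\nabla(|u|^\alpha)$, so, using $\nabla(|u|^\alpha)\,|u|^2=\tfrac{\alpha}{\alpha+2}\nabla(|u|^{\alpha+2})$ and integrating by parts,
\begin{align*}
-i\scal{u,[\,|u|^\alpha,\Gamma_\varphi\,]u}=2\int\nabla\varphi\cdot\nabla(|u|^\alpha)\,|u|^2\,dx=-\frac{2\alpha}{\alpha+2}\int\Delta\varphi\,|u|^{\alpha+2}\,dx,
\end{align*}
which is the last term of $(\ref{localized virial identity II})$. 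For the linear part I would re-run the Balakrishnan computation: applying the commutator identity $(\ref{commutator identity fractional laplacian})$ with $B=\Gamma_\varphi$, using $c_s^2=\tfrac{\sin\pi s}{\pi}$ and the self-adjointness of $(-\Delta+m)^{-1}$, one gets $i\scal{u,[(-\Delta)^s,\Gamma_\varphi]u}=i\int_0^\infty m^s\,\scal{u_m,[-\Delta,\Gamma_\varphi]u_m}\,dm$ with $u_m=c_s(-\Delta+m)^{-1}u$. For each fixed $m$, the integrand $i\scal{u_m,[-\Delta,\Gamma_\varphi]u_m}$ is precisely the ``free'' part of the classical ($s=1$) virial identity evaluated at $u_m$; computing $[-\Delta,\Gamma_\varphi]$ and integrating by parts yields
\begin{align*}
i\scal{u_m,[-\Delta,\Gamma_\varphi]u_m}=4\sum_{j,k=1}^d\int\partial^2_{jk}\varphi\,\partial_j\overline{u}_m\,\partial_k u_m\,dx-\int\Delta^2\varphi\,|u_m|^2\,dx.
\end{align*}
Substituting this and combining with the nonlinear term gives $(\ref{localized virial identity II})$.

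It remains to justify the manipulations quantitatively: finiteness of $M_\varphi(u(t))$ (Lemma~$\ref{lemma various estimate 1}$), differentiability in $t$, legitimacy of differentiating and integrating by parts under the $m$-integral, and absolute convergence of the $m$-integrals. The $\Delta^2\varphi$-term is handled by Lemma~$\ref{lemma various estimate 4}$ (note that $\nabla\varphi\in W^{3,\infty}$ gives $\Delta\varphi\in W^{2,\infty}$, as required there); the Hessian term converges absolutely since $|\partial_j\overline{u}_m\,\partial_k u_m|\le|\nabla u_m|^2$ and, by $(\ref{auxiliary identity})$, $\int_0^\infty m^s\int|\nabla u_m|^2\,dx\,dm=s\|(-\Delta)^{s/2}u\|^2_{L^2}<\infty$ for $u\in\dot{H}^s$; and the potential term is finite because $u(t)\in H^s\hookrightarrow L^{\alpha+2}$ and $\Delta\varphi\in L^\infty$. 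For $u\in C^\infty_0$ all of this is immediate, and the interchange of $\tfrac{d}{dt}$ with $\int_0^\infty m^s\,dm$ follows from the smoothing bounds $\|u_m\|_{L^2}\lesssim m^{-1}\|u\|_{L^2}$ and $\|\nabla u_m\|_{L^2}\lesssim m^{-1/2}\|u\|_{L^2}$, exactly as in \cite[Section~2 and Appendix~A]{BouHimLen}; the general $H^s$ case then follows by approximation. I expect the main obstacle to lie precisely in this last step---propagating all the estimates uniformly along an approximating sequence in $H^s$ in order to pass to the limit in $(\ref{localized virial identity II})$---since, algebraically, the identity is nothing more than the classical $s=1$ virial identity applied to each resolvent-regularized profile $u_m$ and integrated against $m^s\,dm$ through Balakrishnan's formula.
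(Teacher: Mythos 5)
Your proof is correct and follows exactly the route the paper intends: the paper itself does not reprove this lemma but simply cites \cite[Lemma 2.1]{BouHimLen}, whose argument is precisely your computation --- reduce to $u\in C^\infty_0$, write $M_\varphi$ as a quadratic form in the symmetric operator $-i(2\nabla\varphi\cdot\nabla+\Delta\varphi)$, split the time derivative into the nonlinear commutator (giving the $\Delta\varphi\,|u|^{\alpha+2}$ term) and the linear commutator handled via Balakrishnan's formula $(\ref{commutator identity fractional laplacian})$ and the classical $s=1$ virial identity applied to each $u_m$. All the algebra (including $\nabla(|u|^\alpha)|u|^2=\tfrac{\alpha}{\alpha+2}\nabla(|u|^{\alpha+2})$ and the convergence checks via $(\ref{auxiliary identity})$ and Lemma $\ref{lemma various estimate 4}$) checks out.
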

\begin{rem} \label{remark localized virial identity}
	If we make the formal substitution and take the unbounded function $\varphi(x) = |x|^2$, then by $(\ref{auxiliary identity})$, we obtain the virial identity
	\begin{align}
	\frac{d}{dt} \left(4 \im \int \overline{u}(t) x \cdot \nabla u(t) dx\right) &= 8s \|(-\Delta)^{s/2} u(t)\|^2_{L^2} - \frac{4d\alpha}{\alpha+2} \|u(t)\|^{\alpha+2}_{L^{\alpha+2}} \label{virial identity}  \\
	&= 4d\alpha E(u(t)) -2(d\alpha-4s)\|(-\Delta)^{s/2} u(t)\|^2_{L^2}. 
	\nonumber
	\end{align}
	This identity can be proved rigorously by integrating $(\ref{focusing FNLS})$ against $i\left( x\cdot \nabla + \nabla \cdot x \right) \overline{u}(t)$ on $\R^d$. 
\end{rem}
\section{Blow-up criteria} \label{section blowup criteria}
\setcounter{equation}{0}
In this section, we give the proof of Theorem $\ref{theorem blowup criteria}$ and its applications. We follow closely the argument of \cite{DuWuZhang}. 

\subsection{Proof of Theorem $\ref{theorem blowup criteria}$}  
If $T<+\infty$, then we are done. If $T=+\infty$, we show $(\ref{divergence Lq norm})$. By contradiction, we assume that the solution exists globally in time and there exists $q>\alpha+2$ such that
\begin{align}
\sup_{t\in [0,+\infty)} \|u(t)\|_{L^q} <\infty. \label{bounded Lq norm}
\end{align}
Interpolating between $L^2$ and $L^q$, the conservation of mass implies
\begin{align*}
\sup_{t\in [0,+\infty)} \|u(t)\|_{L^{\alpha+2}} <\infty. 
\end{align*}
By the conservation of mass and energy, we get
\begin{align}
\sup_{t\in [0,+\infty)} \|u(t)\|_{H^s} <\infty. \label{bounded Hs norm}
\end{align}
As in \cite{DuWuZhang}, the first step is to control $L^2$-norm of the solution outside a large ball. To do so, we introduce $\vartheta: [0,\infty) \rightarrow [0,1]$ a smooth function satisfying
\[
\vartheta(r)= \left\{
\begin{array}{cl}
0 &\text{if } 0 \leq r \leq \frac{1}{2}, \\
1 &\text{if } r \geq 1.
\end{array}
\right.
\]
Given $R>1$, we denote the radial function 
\[
\psi_R(x) = \psi_R(r) := \vartheta(r/R), \quad r=|x|. 
\]
It is easy to check that
\[
\nabla \psi_R(x) = \frac{x}{rR} \vartheta'(r/R), \quad \Delta \psi_R(x) = \frac{1}{R^2} \vartheta''(r/R) + \frac{(d-1)}{rR} \vartheta'(r/R).
\]
We thus get
\begin{align}
\|\nabla \psi_R\|_{W^{1,\infty}} \sim \|\nabla \psi_R\|_{L^\infty} + \|\Delta \psi_R\|_{L^\infty} \lesssim R^{-1} + R^{-2} \lesssim R^{-1}. \label{estimate psi_R}
\end{align}
We next define 
\[
V_{\psi_R}(u(t)) := \int \psi_R(x) |u(t,x)|^2 dx.
\]
By the fundamental theorem of calculus, we have
\[
V_{\psi_R}(u(t)) = V_{\psi_R}(u_0) + \int_0^t \frac{d}{d\tau} V_{\psi_R}(u(\tau)) d\tau \leq V_{\psi_R}(u_0) + \left(\sup_{\tau \in [0,t]} \left|\frac{d}{d\tau} V_{\psi_R}(u(\tau)) \right| \right) t. 
\]
Using Corollary $\ref{corollary localized virial estimate 1}$,$(\ref{estimate psi_R})$ and $(\ref{bounded Hs norm})$, we get
\begin{align*}
\sup_{\tau \in [0,t]} \left|\frac{d}{d\tau} V_{\psi_R}(u(\tau)) \right|  \lesssim  \|\nabla \psi_R\|_{W^{1,\infty}} \sup_{\tau \in [0,t]} \|u(\tau)\|^2_{H^s} \leq CR^{-1},
\end{align*}
for some constant $C$ independent of $R$. We thus obtain
\[
V_{\psi_R}(u(t)) \leq V_{\psi_R}(u_0) + CR^{-1} t.
\]
By the choice of $\vartheta$, the conservation of mass yields
\[
V_{\psi_R}(u_0) = \int \psi_R(x) |u_0(x)|^2 dx \leq \int_{|x|>R/2} |u_0(x)|^2 dx \rightarrow 0,
\]
as $R\rightarrow \infty$ or $V_{\psi}(u_0) = o_R(1)$. Using the fact
\[
\int_{|x|\geq R} |u(t,x)|^2 dx \leq V_{\psi_R}(u(t)),
\]
we obtain the following control on the $L^2$-norm of $u$ outside a large ball. 
\begin{lem}[$L^2$-norm outside a large ball] \label{lemma L2 norm outside large ball}
	Let $\vareps>0$ and $R>1$. Then there exists a constant $C>0$ independent of $R$ such that for any $t \in [0,T_0]$ with $T_0:= \frac{\vareps R}{C}$, 
	\begin{align}
	\int_{|x|\geq R} |u(t,x)|^2 dx \leq o_R(1) + \vareps. \label{L2 norm outside large ball}
	\end{align}
\end{lem}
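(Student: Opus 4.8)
The plan is to follow the Du--Wu--Zhang strategy of bounding the localized virial action $V_{\psi_R}(u(t))$ directly by integrating its time derivative, which we already control via Corollary~$\ref{corollary localized virial estimate 1}$. The key observation is that under the contradiction hypothesis $(\ref{bounded Lq norm})$, interpolation between $L^2$ and $L^q$ together with the conservation of mass bounds $\|u(t)\|_{L^{\alpha+2}}$ uniformly in $t$; then the conservation of energy upgrades this to a uniform bound $(\ref{bounded Hs norm})$ on $\|u(t)\|_{H^s}$. This last bound is exactly what feeds into Corollary~$\ref{corollary localized virial estimate 1}$, giving $|\frac{d}{dt}V_{\psi_R}(u(t))| \lesssim \|\nabla\psi_R\|_{W^{1,\infty}} \lesssim R^{-1}$ uniformly in $t$, where the decay in $R$ comes from the scaling estimate $(\ref{estimate psi_R})$ for the cutoff $\psi_R$.

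First I would fix the smooth radial cutoff $\psi_R(x) = \vartheta(|x|/R)$ with $\vartheta$ vanishing on $[0,1/2]$ and equal to $1$ on $[1,\infty)$, record the elementary derivative formulas and the consequence $\|\nabla\psi_R\|_{W^{1,\infty}}\lesssim R^{-1}$. Next, applying the fundamental theorem of calculus to $t\mapsto V_{\psi_R}(u(t))$ and inserting the uniform bound on the time derivative yields
\[
V_{\psi_R}(u(t)) \leq V_{\psi_R}(u_0) + CR^{-1}t.
\]
Then I would observe that, since $\psi_R$ is supported in $\{|x|\geq R/2\}$ and bounded by $1$, the conservation of mass gives $V_{\psi_R}(u_0) \leq \int_{|x|>R/2}|u_0|^2\,dx = o_R(1)$ as $R\to\infty$. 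Finally, using $\psi_R \equiv 1$ on $\{|x|\geq R\}$, we have $\int_{|x|\geq R}|u(t,x)|^2\,dx \leq V_{\psi_R}(u(t))$, so for $t\in[0,T_0]$ with $T_0 = \vareps R/C$ we get $\int_{|x|\geq R}|u(t,x)|^2\,dx \leq o_R(1) + CR^{-1}\cdot(\vareps R/C) = o_R(1)+\vareps$, which is $(\ref{L2 norm outside large ball})$.

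Most of this is already spelled out in the discussion preceding the statement, so the proof is essentially a matter of assembling these pieces cleanly; the only genuine content is the chain of implications $(\ref{bounded Lq norm}) \Rightarrow$ uniform $L^{\alpha+2}$ bound $\Rightarrow$ $(\ref{bounded Hs norm})$, which is where the hypothesis $q>\alpha+2$ and the two conservation laws are used, and this relies on $\gamma\geq s$ so that the energy is conserved. The main subtlety to be careful about — not really an obstacle — is that Corollary~$\ref{corollary localized virial estimate 1}$ requires $u\in C([0,T),H^s)$ and $\varphi\in W^{2,\infty}$, both of which hold here since $\psi_R$ is smooth with bounded derivatives of all orders and, by the local theory of Section~$\ref{section local theory}$, the uniform $H^s$ bound propagates the solution as an $H^s$-valued continuous curve. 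I would also note that the constant $C$ in $T_0 = \vareps R/C$ is the one from the virial estimate and is genuinely independent of $R$, which is what makes the statement meaningful as $R\to\infty$.
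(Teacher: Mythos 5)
Your proposal is correct and follows essentially the same route as the paper: the same cutoff $\psi_R$, the same use of Corollary~\ref{corollary localized virial estimate 1} together with the uniform $H^s$ bound $(\ref{bounded Hs norm})$ to get $|\frac{d}{dt}V_{\psi_R}(u(t))|\lesssim R^{-1}$, the fundamental theorem of calculus, and the observation that $V_{\psi_R}(u_0)=o_R(1)$ by mass conservation. No gaps.
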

Next, let us choose $\theta: [0,\infty) \rightarrow [0,\infty)$ a smooth function such that
\[
\theta(r) = \left\{
\begin{array}{cl}
r^2 &\text{if } 0 \leq r \leq 1, \\
2 &\text{if } r\geq 2,
\end{array}
\right.
\quad \text{and} \quad \theta''(r) \leq 2 \text{ for } r\geq 0.
\]
Given $R>1$, we define the radial function
\begin{align}
\varphi_R(x) = \varphi_R(r):= R^2\theta(r/R), \quad r=|x|. \label{define varphi_R}
\end{align}
We readily verify that
\begin{align}
2-\varphi''_R(r) \geq 0, \quad 2-\frac{\varphi'_R(r)}{r} \geq 0, \quad 2d-\Delta \varphi_R(x) \geq 0, \quad \forall r\geq 0, \quad \forall x \in \R^d. \label{properties varphi_R}
\end{align}
Moreover,
\[
\|\nabla^k \varphi_R\|_{L^\infty} \lesssim R^{2-k}, \quad k = 0, \cdots, 4,
\]
and
\[
\text{supp}(\nabla^k \varphi_R) \subset \left\{
\begin{array}{cl}
\{|x| \leq 2R\} &\text{for } k=1,2, \\
\{R \leq |x| \leq 2R\} &\text{for } k=3,4.
\end{array}
\right.
\]
Denote
\[
M_{\varphi_R}(u(t)) := 2 \im{ \int \overline{u}(t,x) \nabla \varphi_R(x) \cdot \nabla u(t,x) dx}.
\]
Applying Lemma $\ref{lemma localized virial identity}$ with $\varphi(x) = \varphi_R(x)$, we have
\begin{align*}
\frac{d}{dt} M_{\varphi_R}(u(t)) &= -\int_0^\infty m^s \int \Delta^2\varphi_R |u_m(t)|^2 dx dm \\
&\mathrel{\phantom{=}} + 4 \sum_{j,k=1}^d \int_0^\infty m^s \int \partial^2_{jk} \varphi_R \partial_j \overline{u}_m(t) \partial_k u_m(t) dx dm \\
&\mathrel{\phantom{=}} -\frac{2\alpha}{\alpha+2} \int \Delta \varphi_R |u(t)|^{\alpha+2} dx,
\end{align*}
where $u_m(t) = c_s (-\Delta+m)^{-1} u(t)$. Since $\text{supp}(\Delta^2 \varphi_R) \subset \{ |x| \geq R\}$, we use Lemma $\ref{lemma various estimate 4}$ to have
\begin{align*}
\left|\int_0^\infty m^s \int \Delta^2 \varphi_R |u_m(t)|^2 dx dm \right| & \lesssim \|\Delta^2 \varphi_R\|^s_{L^\infty} \|\Delta \varphi_R\|^{1-s}_{L^\infty} \|u(t)\|^2_{L^2(|x|\geq R)} \\
&\lesssim R^{-2s} \|u(t)\|^2_{L^2(|x| \geq R)}.
\end{align*}
Since $\varphi_R$ is radial, we use the fact
\[
\partial^2_{jk} = \left(\frac{\delta_{jk}}{r} - \frac{x_jx_k}{r^3} \right) \partial_r + \frac{x_j x_k}{r^2} \partial^2_r
\]
to write
\begin{multline*}
\sum_{j,k=1}^d \int_0^\infty m^s \int \partial^2_{jk} \varphi_R \partial_j \overline{u}_m (t) \partial_k u_m(t) dx dm = \int_0^\infty m^s \int \frac{\varphi'_R}{r} |\nabla u_m(t)|^2 dx dm \\
 + \int_0^\infty m^s \int \left(\frac{\varphi''_R}{r^2} - \frac{\varphi'_R}{r^3} \right) |x\cdot \nabla u_m(t)|^2 dx dm.
\end{multline*}
Thanks to the identity $(\ref{auxiliary identity})$, we have
\begin{multline*}
\int_0^\infty m^s \int \frac{\varphi'_R}{r} |\nabla u_m(t)|^2 dx dm = 2s \|(-\Delta)^{s/2} u(t)\|^2_{L^2} \\
+  \int_0^\infty m^s \int \left(\frac{\varphi'_R}{r}-2 \right) |\nabla u_m(t)|^2 dx dm.
\end{multline*}
We next use the fact $\varphi''_R \leq 2$ and the Cauchy-Schwarz estimate $|x\cdot \nabla u_m| \leq r|\nabla u_m|$ to see that
\begin{multline*}
\int_0^\infty m^s \int \left( \frac{\varphi'_R}{r} -2\right) |\nabla u_m(t)|^2 dx dm \\
+ \int_0^\infty m^s \int \left(\varphi''_R - \frac{\varphi'_R}{r} \right) \frac{|x\cdot \nabla u_m(t)|^2}{r^2} dx dm \leq 0.
\end{multline*}
We next write
\[
-\frac{2\alpha}{\alpha+2} \int \Delta \varphi_R |u(t)|^{\alpha+2} dx = -\frac{4d\alpha}{\alpha+2} \|u(t)\|^{\alpha+2}_{L^{\alpha+2}} + \frac{2\alpha}{\alpha+2} \int (2d-\Delta \varphi_R) |u(t)|^{\alpha+2} dx.
\]
Collecting the above estimates, we obtain
\begin{align*}
\frac{d}{dt} M_{\varphi_R} (u(t)) &\leq 8s \|(-\Delta)^{s/2} u(t)\|^2_{L^2} -\frac{4d\alpha}{\alpha+2} \|u(t)\|^{\alpha+2}_{L^{\alpha+2}} + C R^{-2s} \|u(t)\|^2_{L^2(|x| \geq R)} \\
&\mathrel{\phantom{\leq 8s \|(-\Delta)^{s/2} u(t)\|^2_{L^2} }} + \frac{2\alpha}{\alpha+2} \int (2d - \Delta \varphi_R) |u(t)|^{\alpha+2} dx.
\end{align*}
Since $\text{supp}(2d-\Delta\varphi_R) \subset \{|x| \geq R\}$ and $\|2d-\Delta\varphi_R\|_{L^\infty} \lesssim 1$, we interpolate between $L^2$ and $L^q$ and use $(\ref{bounded Lq norm})$ to get
\[
\int (2d-\Delta \varphi_R) |u(t)|^{\alpha+2} dx \lesssim \|u(t)\|^{(1-\eta)(\alpha+2)}_{L^q(|x| \geq R)} \|u(t)\|^{\eta(\alpha+2)}_{L^2(|x| \geq R)} \lesssim \|u(t)\|^{\eta(\alpha+2)}_{L^2(|x|\geq R)},
\]
for some $0<\eta<1$. Note that the condition $q>\alpha+2$ is neccessary in the above estimate. We thus obtain the following estimate.
\begin{lem} \label{lemma localized virial estimate application}
	Let $R>1$ and $\varphi_R$ be as in $(\ref{define varphi_R})$. There exist a constant $C>0$ independent of $R$ and $0<\eta<1$ such that
	\begin{align}
	\frac{d}{dt} M_{\varphi_R}(u(t)) \leq 16 K(u(t)) + CR^{-2} \|u(t)\|^2_{L^2(|x| \geq R)} + C \|u(t)\|^{\eta(\alpha+2)}_{L^2(|x|\geq R)}, \label{localized virial estimate application}
	\end{align}
	for any $t\in [0,T)$, where $K(u(t))$ is given in $(\ref{define blowup quantity})$.
\end{lem}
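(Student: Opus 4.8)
The identity to be established is, in essence, already assembled in the computation preceding the statement; the plan is to organize it into four steps. I would begin from the localized virial identity $(\ref{localized virial identity II})$ of Lemma~\ref{lemma localized virial identity}, applied with $\varphi=\varphi_R$ from $(\ref{define varphi_R})$ — a legitimate choice since $\varphi_R\in C^\infty$ with $\nabla\varphi_R\in W^{3,\infty}$ — and then estimate separately the three resulting pieces of $\frac{d}{dt}M_{\varphi_R}(u(t))$: the biharmonic term with $\Delta^2\varphi_R$, the Hessian term with $\partial^2_{jk}\varphi_R$, and the nonlinear term with $\Delta\varphi_R$. The aim is to recover $8s\|(-\Delta)^{s/2}u(t)\|_{L^2}^2-\frac{4d\alpha}{\alpha+2}\|u(t)\|_{L^{\alpha+2}}^{\alpha+2}=16K(u(t))$ from the leading parts of the Hessian and nonlinear terms, and to show that everything else is an error supported in $\{|x|\geq R\}$.

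For the Hessian term I would exploit the radial symmetry of $\varphi_R$ and the formula $\partial^2_{jk}=\big(\frac{\delta_{jk}}{r}-\frac{x_jx_k}{r^3}\big)\partial_r+\frac{x_jx_k}{r^2}\partial^2_r$ to rewrite $\sum_{j,k}\partial^2_{jk}\varphi_R\,\partial_j\overline{u}_m\,\partial_k u_m=\frac{\varphi_R'}{r}|\nabla u_m|^2+\big(\frac{\varphi_R''}{r^2}-\frac{\varphi_R'}{r^3}\big)|x\cdot\nabla u_m|^2$. Splitting $\frac{\varphi_R'}{r}=2+\big(\frac{\varphi_R'}{r}-2\big)$, the constant part contributes $8\int_0^\infty m^s\int|\nabla u_m(t)|^2\,dx\,dm=8s\|(-\Delta)^{s/2}u(t)\|_{L^2}^2$ by the spectral identity $(\ref{auxiliary identity})$, while the leftover
\[
4\int_0^\infty m^s\int\Big[\Big(\tfrac{\varphi_R'}{r}-2\Big)|\nabla u_m(t)|^2+\Big(\varphi_R''-\tfrac{\varphi_R'}{r}\Big)\tfrac{|x\cdot\nabla u_m(t)|^2}{r^2}\Big]\,dx\,dm\leq 0
\]
is nonpositive because of the convexity properties $(\ref{properties varphi_R})$ (namely $\varphi_R''\leq 2$ and $\varphi_R'/r\leq 2$) together with the pointwise Cauchy-Schwarz bound $|x\cdot\nabla u_m|\leq r|\nabla u_m|$.

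The biharmonic term is controlled by Lemma~\ref{lemma various estimate 4}: since $\text{supp}(\Delta^2\varphi_R)\subset\{R\leq|x|\leq 2R\}$ with $\|\Delta^2\varphi_R\|_{L^\infty}\lesssim R^{-2}$ and $\|\Delta\varphi_R\|_{L^\infty}\lesssim 1$, one obtains a bound $\lesssim R^{-2s}\|u(t)\|_{L^2(|x|\geq R)}^2$, the reduction from the global $L^2$ norm appearing in Lemma~\ref{lemma various estimate 4} to the exterior norm being achieved by inserting a smooth cutoff to $\{|x|\geq R/2\}$ and using the exponential decay of the kernel of $(-\Delta+m)^{-1}$ to dispose of the remainder; since $s>1/2$ this decays fast enough in $R$ for the applications. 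For the nonlinear term I would write $\Delta\varphi_R=2d-(2d-\Delta\varphi_R)$, so that $-\frac{2\alpha}{\alpha+2}\int\Delta\varphi_R|u(t)|^{\alpha+2}\,dx=-\frac{4d\alpha}{\alpha+2}\|u(t)\|_{L^{\alpha+2}}^{\alpha+2}+\frac{2\alpha}{\alpha+2}\int(2d-\Delta\varphi_R)|u(t)|^{\alpha+2}\,dx$; the last integral has a nonnegative, $R$-uniformly bounded weight (by $(\ref{properties varphi_R})$) supported in $\{|x|\geq R\}$, so interpolating $L^{\alpha+2}$ between $L^2$ and $L^q$ on that set — with $\eta\in(0,1)$ determined by $\frac{1}{\alpha+2}=\frac{1-\eta}{q}+\frac{\eta}{2}$, which is solvable precisely because $q>\alpha+2$ — and invoking the a priori bound $(\ref{bounded Lq norm})$ bounds it by $C\|u(t)\|_{L^2(|x|\geq R)}^{\eta(\alpha+2)}$, with $C$ and $\eta$ independent of $R$.

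Collecting the three estimates gives $\frac{d}{dt}M_{\varphi_R}(u(t))\leq 8s\|(-\Delta)^{s/2}u(t)\|_{L^2}^2-\frac{4d\alpha}{\alpha+2}\|u(t)\|_{L^{\alpha+2}}^{\alpha+2}+CR^{-2s}\|u(t)\|_{L^2(|x|\geq R)}^2+C\|u(t)\|_{L^2(|x|\geq R)}^{\eta(\alpha+2)}$, and recognizing the first two terms as $16K(u(t))$ through $(\ref{define blowup quantity})$ yields $(\ref{localized virial estimate application})$. The only step that is not pure bookkeeping is the biharmonic estimate — specifically, passing from the global $L^2$ norm in Lemma~\ref{lemma various estimate 4} to the exterior norm $\|u\|_{L^2(|x|\geq R)}$ for the nonlocal quantities $u_m$, which is what makes the error terms genuinely small when fed into the proof of Theorem~\ref{theorem blowup criteria}; the remainder is an exact rearrangement of Lemma~\ref{lemma localized virial identity} combined with the sign and support information on $\varphi_R$ in $(\ref{properties varphi_R})$ and the identity $(\ref{auxiliary identity})$.
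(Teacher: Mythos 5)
Your proposal is correct and reproduces the paper's own derivation essentially step for step: the same application of Lemma \ref{lemma localized virial identity} with $\varphi=\varphi_R$, the same radial decomposition of the Hessian term combined with the identity $(\ref{auxiliary identity})$ and the sign conditions $(\ref{properties varphi_R})$ to extract $8s\|(-\Delta)^{s/2}u(t)\|_{L^2}^2$ with a nonpositive remainder, the same use of Lemma \ref{lemma various estimate 4} for the biharmonic term, and the same $\Delta\varphi_R=2d-(2d-\Delta\varphi_R)$ splitting plus $L^2$--$L^q$ interpolation with $(\ref{bounded Lq norm})$ for the nonlinear term. The only place you go beyond the paper is the localization of the $L^2$ norm in the biharmonic estimate (the paper simply writes $\|u(t)\|^2_{L^2(|x|\geq R)}$ after citing Lemma \ref{lemma various estimate 4}, which is stated with the global norm); be aware that your sketched cutoff-plus-resolvent-decay argument needs extra care near $m=0$, where the crude bound $\|u_m\|_{L^2}\lesssim m^{-1}\|u\|_{L^2}$ produces a divergent $\int_0^1 m^{s-2}\,dm$ --- but this localization is inessential for the intended application, since mass conservation bounds the global $L^2$ norm and the prefactor $R^{-2s}$ already vanishes as $R\to\infty$.
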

We now complete the proof of Theorem $\ref{theorem blowup criteria}$. Applying Lemma $\ref{lemma L2 norm outside large ball}$ and Lemma $\ref{lemma localized virial estimate application}$, we see that for any $\vareps>0$ and any $R>1$, there exists a constant $C>0$ independent of $R$ such that for any $t\in [0,T_0]$ with $T_0:=\frac{\vareps R}{C}$,
\begin{align*}
\frac{d}{dt} M_{\varphi_R}(u(t)) &\leq 16 K(u(t)) + CR^{-2}(o_R(1)+\vareps)^2+ C(o_R(1) +\vareps)^{\eta(\alpha+2)} \\
&\leq -16\delta + CR^{-2}\left(o_R(1) + \vareps^2\right) + C\left(o_R(1) + \vareps^{\eta(\alpha+2)}\right).
\end{align*}
Note that the constant $C$ may change from lines to lines but is independent of $R$. We now choose $\vareps>0$ so that
\begin{align*}
C \vareps^{\eta(\alpha+2)} = 4\delta. 
\end{align*}
We see that for $R\gg 1$ large,
\begin{align}
\frac{d}{dt} M_{\varphi_R}(u(t)) \leq -\delta <0, \label{negativity derivative virial action}
\end{align}
for any $t\in [0,T_0]$ with $T_0 = \frac{\vareps R}{C}$. Note also that since $\vareps>0$ is fixed, we can take $T_0$ as large as we want by increasing $R$ accordingly. From $(\ref{negativity derivative virial action})$, we infer that 
\begin{align}
M_{\varphi_R}(u(t)) \leq -c t, \label{estimate virial action}
\end{align}
for all $t\in [t_0,T_0]$ with some sufficiently large time $t_0 \in [0,T_0]$ and some constant $c>0$ depending only on $\delta$. On the other hand, by Lemma $\ref{lemma various estimate 1}$ and the conservation of mass, we see that for \footnote{The solution is assumed to exist on $[0,+\infty)$.} any $t\in [0,+\infty)$,
\begin{align*}
|M_{\varphi_R}(u(t))| &\lesssim C(\varphi_R) \left(\||\nabla|^{1/2} u(t)\|^2_{L^2} + \|u(t)\|_{L^2} \||\nabla|^{1/2} u(t)\|_{L^2} \right) \\
&\lesssim C(\varphi_R) \left(\||\nabla|^{1/2}u(t)\|^2_{L^2} + \|u(t)\|^2_{L^2}\right) \\
&\lesssim C(\varphi_R) \left(\||\nabla|^{1/2} u(t)\|^2_{L^2} +1 \right) \\
&\lesssim C(\varphi_R) \left(\|(-\Delta)^{s/2}u(t)\|^{\frac{1}{s}}_{L^2} +1 \right).
\end{align*}
Here we use the interpolation estimate $\||\nabla|^{1/2} u\|_{L^2} \lesssim \|(-\Delta)^{s/2} u\|^{\frac{1}{2s}}_{L^2} \|u\|^{1-\frac{1}{2s}}_{L^2}$ for $s>1/2$. This combined with $(\ref{estimate virial action})$ yield
\begin{align*}
c t \leq - M_{\varphi_R}(u(t)) = |M_{\varphi_R}(u(t))| \lesssim C(\varphi_R) \left( \|(-\Delta)^{s/2} u(t)\|^{\frac{1}{s}}_{L^2}+1 \right),
\end{align*}
for any $t\in [t_0, T_0]$. This shows that
\begin{align}
\|(-\Delta)^{s/2} u(t)\|_{L^2} \geq C t^s, \label{lower bound Hs norm}
\end{align}
for any $t\in [\overline{t}_0, T_0]$ with some sufficiently large time $t_0 \leq \overline{t}_0 \leq T_0$. Taking $t \uparrow T_0= \frac{\vareps R}{C}$, we see that 
\[
\|u(t)\|_{H^s} \rightarrow \infty \text{ as } R \rightarrow \infty,
\]
which contradicts to $(\ref{bounded Hs norm})$. The proof is complete. 
\defendproof

\subsection{Mass-critical blow-up criteria}
In this short subsection, we give the proof of Proposition $\ref{proposition mass-critical blowup criteria}$. This result follows directly from Corollary $\ref{corollary negative energy blowup criteria}$ with $\gamma=s$. Moreover, if $T<+\infty$, the limit 
\[
\lim_{t\uparrow T} \|(-\Delta)^{s/2} u(t)\|_{L^2} = \infty
\]
follows from the blow-up alternative (see Section $\ref{section local theory}$). In the case $T=+\infty$, the Sobolev embedding, namely $H^s(\R^d) \subset L^q(\R^d)$ for any $q \in [2, \infty)$ satisfying $\frac{1}{q} \geq \frac{1}{2}-\frac{s}{d}$ together with the conservation of mass show
\[
\sup_{t\in [0,+\infty)} \|(-\Delta)^{s/2} u(t)\|_{L^2} =\infty.
\]
The conservation of energy then yields
\[
\sup_{t\in [0,+\infty)} \|u(t)\|_{L^{\frac{4s}{d}+2}} =\infty.
\]
This proves Proposition $\ref{proposition mass-critical blowup criteria}$.
\defendproof

\subsection{Mass and energy intercritical blow-up criteria}
We now give the proof of Proposition $\ref{proposition intercritical blowup criteria}$. By the same argument as in the previous subsection using the Sobolev embedding and the conservation of mass and energy, it remains to show $(\ref{blowup condition})$ for some $\delta>0$. The case $E(u_0)<0$ follows as in Corollary $\ref{corollary negative energy blowup criteria}$. Let us now consider initial data $u_0$ with $E(u_0) \geq 0$ and  $(\ref{blowup condition intercritical})$. The assumption $(\ref{blowup condition intercritical})$ implies
\begin{align}
\left\{
\begin{array}{ccc}
E(u_0) M^\sigma(u_0) &< & E(Q) M^\sigma(Q), \\
\|(-\Delta)^{s/2} u_0\|_{L^2} \|u_0\|^\sigma_{L^2} &>& \|(-\Delta)^{s/2} Q\|_{L^2} \|Q\|^\sigma_{L^2},
\end{array}
\right.
\label{blowup condition intercritical equivalence}
\end{align}
where 
\[
\sigma:= \frac{s-\sct}{\sct} = \frac{4s-(d-2s)\alpha}{d\alpha-4s}. 
\]
We next recall the sharp Gagliardo-Nirenberg inequality (see e.g. \cite[Appendix]{BouHimLen})
\begin{align}
\|u(t)\|^{\alpha+2}_{L^{\alpha+2}} \leq C_{\text{GN}} \|u(t)\|^{\frac{4s-(d-2s)\alpha}{2s}}_{L^2} \|(-\Delta)^{s/2} u(t)\|^{\frac{d\alpha}{2s}}_{L^2}, \label{sharp gagliardo-nirenberg inequality}
\end{align}
where the sharp constant is given by
\begin{align}
C_{\text{GN}} = \frac{\|Q\|^{\alpha+2}_{L^{\alpha+2}}}{ \|Q\|^{\frac{4s-(d-2s)\alpha}{2s}}_{L^2} \|(-\Delta)^{s/2} Q\|^{\frac{d\alpha}{2s}}_{L^2}}, \label{sharp constant gagliardo-nirenberg inequality}
\end{align}
with $Q$ is the unique (up to symmetries) positive radial solution to $(\ref{elliptic equation intercritical})$. We also have the following Pohozaev's identities
\begin{align}
\|(-\Delta)^{s/2} Q\|^2_{L^2} = \frac{d\alpha}{2s(\alpha+2)} \|Q\|^{\alpha+2}_{L^{\alpha+2}} = \frac{d\alpha}{4s-(d-2s)\alpha} \|Q\|^2_{L^2}. \label{pohozaev identities intercritical}
\end{align}
A direct calculation shows
\begin{align}
C_{\text{GN}} &= \frac{2s(\alpha+2)}{d\alpha} \frac{1}{\left( \|(-\Delta)^{s/2} Q\|_{L^2} \|Q\|^\sigma_{L^2} \right)^{\frac{d\alpha-4s}{2s}} } \label{sharp constant relation} \\
E(Q) M^\sigma(Q) &= \frac{d\alpha-4s}{2d\alpha} \left( \|(-\Delta)^{s/2} Q\|_{L^2} \|Q\|^\sigma_{L^2} \right)^2. \label{threshold relation intercritical}
\end{align}
We now multiply both sides of $E(u(t))$ by $M^\sigma(u(t))$ and use the sharp Gagliardo-Nirenberg inequality to get
\begin{align*}
E(u(t)) M^\sigma(u(t)) &= \frac{1}{2} \left( \|(-\Delta)^{s/2} u(t)\|_{L^2} \|u(t)\|^\sigma_{L^2} \right)^2 - \frac{1}{\alpha+2} \|u(t)\|^{\alpha+2}_{L^{\alpha+2}} \|u(t)\|^{2\sigma}_{L^2} \\
&\geq \frac{1}{2} \left( \|(-\Delta)^{s/2} u(t)\|_{L^2} \|u(t)\|^\sigma_{L^2}\right)^2 \\
&\mathrel{\phantom{\geq}}- \frac{C_{\text{GN}}}{\alpha+2} \|(-\Delta)^{s/2} u(t)\|^{\frac{d\alpha}{2s}}_{L^2} \|u(t)\|^{\frac{4s-(d-2s)\alpha}{2s}}_{L^2} \\
&= f\left(  \|(-\Delta)^{s/2} u(t)\|_{L^2} \|u(t)\|^\sigma_{L^2} \right),
\end{align*}
where $f(x) := \frac{1}{2} x^2 - \frac{C_{\text{GN}}}{\alpha+2} x^{\frac{d\alpha}{2s}}$. It is easy to see that $f$ is increasing on $(0,x_0)$ and decreasing on $(x_0, \infty)$, where
\[
x_0 = \left( \frac{2s(\alpha+2)}{d\alpha C_{\text{GN}}} \right)^{\frac{2s}{d\alpha-4s}}= \|(-\Delta)^{s/2} Q\|_{L^2} \|Q\|^\sigma_{L^2}.
\]
Here the last equality follows from $(\ref{sharp constant relation})$. By $(\ref{sharp constant relation})$ and $(\ref{threshold relation intercritical})$, we see that
\begin{align}
f\left(\|(-\Delta)^{s/2} Q\|_{L^2} \|Q\|^\sigma_{L^2} \right) &= \frac{d\alpha-4s}{2d\alpha} \left( \|(-\Delta)^{s/2} Q\|_{L^2} \|Q\|^\sigma_{L^2} \right)^2  \nonumber \\
&= E(Q) M^\sigma(Q). \label{threshold relation intercritical 1}
\end{align}
Thus the conservation of mass and energy toghether with the first condition in $(\ref{blowup condition intercritical equivalence})$ imply
\begin{align*}
f\left(  \|(-\Delta)^{s/2} u(t)\|_{L^2} \|u(t)\|^\sigma_{L^2} \right) &\leq E(u(t)) M^\sigma(u(t)) = E(u_0) M^\sigma(u_0) \\
&< E(Q) M^\sigma (Q) = f\left(\|(-\Delta)^{s/2} Q\|_{L^2} \|Q\|^\sigma_{L^2} \right).
\end{align*}
Using the second condition $(\ref{blowup condition intercritical equivalence})$, the continuity argument shows that
\begin{align}
\|(-\Delta)^{s/2} u(t)\|_{L^2} \|u(t)\|^\sigma_{L^2} > \|(-\Delta)^{s/2} Q\|_{L^2} \|Q\|^\sigma_{L^2}, \label{property solution intercritical}
\end{align}
for any $t\in [0,T)$. This implies that there exists $\delta>0$ so that $(\ref{blowup condition})$ holds. Indeed, since $E(u_0) M^\sigma(u_0) < E(Q) M^\sigma(Q)$, we pick $\rho>0$ small enough so that 
\begin{align}
E(u_0) M^\sigma(u_0) \leq (1-\rho) E(Q) M^\sigma(Q). \label{refined blowup condition intercritical}
\end{align}
Multiplying $K(u(t))$ with the conserved quantity $M^\sigma(u(t))$ and using $(\ref{threshold relation intercritical 1})$, $(\ref{property solution intercritical})$ and $(\ref{refined blowup condition intercritical})$, we obtain
\begin{align*}
K(u(t)) M^\sigma(u(t)) &= \frac{d\alpha}{4} E(u(t)) M^\sigma(u(t)) -\frac{d\alpha-4s}{8} \left(\|(-\Delta)^{s/2} u(t)\|_{L^2} \|u(t)\|^\sigma_{L^2} \right)^2 \\
&= \frac{d\alpha}{4} E(u_0) M^\sigma(u_0) - \frac{d\alpha-4s}{8} \left(\|(-\Delta)^{s/2} u(t)\|_{L^2} \|u(t)\|^\sigma_{L^2} \right)^2 \\ 
&\leq \frac{d\alpha}{4} (1-\rho) E(Q) M^\sigma(Q) - \frac{d\alpha-4s}{8} \left(\|(-\Delta)^{s/2} Q\|_{L^2} \|Q\|^\sigma_{L^2} \right)^2 \\
&= -\frac{(d\alpha-4s)\rho}{8} \left(\|(-\Delta)^{s/2} Q\|_{L^2} \|Q\|^\sigma_{L^2} \right)^2,
\end{align*}
for any $t\in [0,T)$. This shows $(\ref{blowup condition})$ with 
\[
\delta = \frac{(d\alpha-4s)\rho}{8} \|(-\Delta)^{s/2} Q\|^2_{L^2} \left(\frac{M(Q)}{M(u_0)} \right)^\sigma>0.
\]
The proof is complete.
\defendproof

\subsection{Energy critical blow-up criteria}
In this subsection, we give the proof of Proposition $\ref{proposition energy-critical blowup criteria}$. The proof is similar to the one of Proposition $\ref{proposition intercritical blowup criteria}$. Instead of using the sharp Gagliardo-Nirenberg inequality, we make use of the sharp Sobolev embedding
\[
\|u\|_{L^{s^\star}} \leq C_{\text{SE}} \|(-\Delta)^{s/2} u\|_{L^2},
\]
where $s^\star = \frac{2d}{d-2s}$ and the sharp constant
\begin{align}
C_{\text{SE}} = \frac{\|W\|_{L^{s^\star}}}{\|(-\Delta)^{s/2} W\|_{L^2}}. \label{sharp constant sobolev embedding}
\end{align}
Here $W$ is the unique (up to symmetries) positive radial solution to $(\ref{elliptic equation energy-critical})$. The following identities are easy to check
\begin{align}
\|(-\Delta)^{s/2} W\|^2_{L^2} &= \|W\|^{s^\star}_{L^{s^\star}} = \frac{1}{C_{\text{SE}}^{\frac{d}{s}}}, \label{pohozaev identities energy-critical} \\
E(W) = \frac{1}{2} \|(-\Delta)^{s/2} W\|^2_{L^2} &- \frac{1}{s^\star} \|W\|^{s^\star}_{L^{s^\star}} = \frac{s}{d} \frac{1}{C_{\text{SE}}^{\frac{d}{s}}}. \label{threshold relation energy-critical}
\end{align}
In particular, we have
\begin{align}
C_{\text{SE}} = \|(-\Delta)^{s/2} W\|^{-\frac{2s}{d}}_{L^2} = \|W\|^{-\frac{s^\star s}{d}}_{L^{s^\star}} = \left[ \frac{s}{d E(W)} \right]^{\frac{s}{d}}. \label{sharp constant relation sobolev embedding}
\end{align}
We now apply the sharp Sobolev embedding to get
\begin{align*}
E(u(t)) &= \frac{1}{2} \|(-\Delta)^{s/2} u(t)\|^2_{L^2} - \frac{1}{s^\star} \|u(t)\|^{s^\star}_{L^{s^\star}} \\
&\geq \frac{1}{2} \|(-\Delta)^{s/2} u(t)\|^2_{L^2} - \frac{[C_{\text{SE}}]^{s^\star}}{s^\star} \|(-\Delta)^{s/2} u(t)\|^{s^\star}_{L^2} = g\left(\|(-\Delta)^{s/2} u(t)\|_{L^2} \right),
\end{align*}
where $g(y) := \frac{1}{2} y^2 - \frac{[C_{\text{SE}}]^{s^\star}}{s^\star} y^{s^\star}$. We see that $g$ is increasing on $(0,y_0)$ and decreasing on $(y_0, \infty)$ with
\[
y_0 = \left( \frac{1}{[C_{\text{SE}}]^{s^\star}} \right)^{\frac{d-2s}{4s}} = \|(-\Delta)^{s/2} W\|_{L^2}.
\]
Here we use $(\ref{pohozaev identities energy-critical})$ to have the second equality. We also have from $(\ref{pohozaev identities energy-critical})$ and $(\ref{threshold relation energy-critical})$ that 
\begin{align}
g\left( \|(-\Delta)^{s/2} W\|_{L^2} \right) = \frac{s}{d} \frac{1}{C_{\text{SE}}^{\frac{d}{s}}} = E(W). \label{threshold relation energy-critical 1}
\end{align}
Thanks to the conservation of energy, the first condition in $(\ref{blowup condition energy-critical})$ yields
\begin{align*}
g\left(\|(-\Delta)^{s/2} u(t)\|_{L^2} \right) \leq E(u(t)) = E(u_0) <E(W) = g \left( \|(-\Delta)^{s/2} W\|_{L^2}\right),
\end{align*}
for any $t\in[0,T)$. By the second condition in $(\ref{blowup condition energy-critical})$, the continuity argument implies that
\begin{align}
\|(-\Delta)^{s/2} u(t)\|_{L^2} > \|(-\Delta)^{s/2} W\|_{L^2}, \label{property solution energy-critical}
\end{align}
for any $t\in [0,T)$. We next pick $\rho>0$ small enough so that
\begin{align}
E(u_0) \leq (1-\rho) E(W). \label{refined blowup condition energy-critical}
\end{align}
By the conservation of energy, $(\ref{property solution energy-critical})$, $(\ref{refined blowup condition energy-critical})$ and the fact $E(W) = \frac{s}{d}\|(-\Delta)^{s/2} W\|^2_{L^2}$, we learn that
\begin{align*}
K(u(t)) &= \frac{ds}{d-2s} E(u(t)) - \frac{s^2}{d-2s} \|(-\Delta)^{s/2} u(t)\|^2_{L^2} \\
&= \frac{ds}{d-2s} E(u_0) - \frac{s^2}{d-2s} \|(-\Delta)^{s/2} u(t)\|^2_{L^2} \\
&\leq \frac{ds}{d-2s} (1-\rho) E(W) - \frac{s^2}{d-2s} \|(-\Delta)^{s/2} W\|^2_{L^2} \\
&=-\frac{\rho s^2}{d-2s} \|(-\Delta)^{s/2} W\|^2_{L^2},
\end{align*}
for any $t\in [0,T)$. This shows $(\ref{blowup condition})$ with 
\[
\delta= \frac{\rho s^2}{d-2s} \|(-\Delta)^{s/2} W\|^2_{L^2}>0.
\]
The proof is complete.
\defendproof

\section*{Acknowledgments}
The author would like to express his deep gratitude to his wife - Uyen Cong for her encouragement and support. He would like to thank his supervisor Prof. Jean-Marc Bouclet for the kind guidance and constant encouragement. He also would like to thank the reviewer for his/her helpful comments and suggestions. 


\end{document}